\newtheorem{theorem}{Theorem}[section]
\newtheorem{corollary}[theorem]{Corollary}
\newtheorem{lemma}[theorem]{Lemma}
\newtheorem{proposition}[theorem]{Proposition}
\theoremstyle{definition}
\newtheorem{definition}[theorem]{Definition}
\newtheorem{remark}[theorem]{Remark}
\newcommand{\abs}[1]{\left\lvert #1 \right\rvert}
\newcommand{\norm}[1]{\left\lVert #1 \right\rVert}
\newcommand{\angularbrack}[1]{\left\langle #1 \right\rangle}
\title{Duality Properties of Indicatrices of Knots}
\author[Adams]{Colin Adams}
\address{Colin Adams, Department of Mathematics and Statistics, Williams College, Williamstown, MA 01267}
\email{Colin.C.Adams@williams.edu}
\author[Collins]{Dan Collins}
\address{Dan Collins, Department of Mathematics, Princeton University, Princeton, NJ 08544}
\email{djc224@cornell.edu}
\author[Hawkins]{Katherine Hawkins}
\address{Katherine Hawkins, Department of Mathematics, Episcopal High School, P.O. Box 271299, Houston, TX 77277}
\email{khawkins@ehshouston.org}
\author[Sia]{Charmaine Sia}
\address{Charmaine Sia, Department of Mathematics, Harvard University, Cambridge, MA 02138}
\email{sia@mit.edu}
\author[Silversmith]{Rob Silversmith}
\address{Rob Silversmith, Department of Mathematics, 530 Church St., University of Michigan, Ann Arbor, MI 48109-1043}
\email{rsilvers@mich.edu}
\author[Tshishiku]{Bena Tshishiku}
\address{Bena Tshishiku, Department of Mathematics, University of Chicago, Chicago, IL 60637}
\email{tshishikub10@mail.wlu.edu}
\thanks{Research supported by NSF grant DMS-0850577 and funds provided by Williams College.  This paper benefited from previous work by the SMALL Knot Theory group at Williams College in summer, 2008, which consisted of Colin Adams, William George, Rachel Hudson, Ralph Morrison, Laura Starkston, Samuel Taylor, and Olga Turanova.  The authors would also like to thank Allison Henrich for helpful discussions.}
\subjclass[2000]{Primary 53A04, Secondary 57M25}
\keywords{Spherical indicatrices, duality, bridge index, superbridge index, stick knots, stick number, spherical polygon}
\date{\today}
\begin{document}

\begin{abstract}
	The bridge index and superbridge index of a knot are important invariants in knot theory.  We define the bridge map of a knot conformation, which is closely related to these two invariants, and interpret it in terms of the tangent indicatrix of the knot conformation.  Using the concepts of dual and derivative curves of spherical curves as introduced by Arnold, we show that the graph of the bridge map is the union of the binormal indicatrix, its antipodal curve, and some number of great circles.  Similarly, we define the inflection map of a knot conformation, interpret it in terms of the binormal indicatrix, and express its graph in terms of the tangent indicatrix.  This duality relationship is also studied for another dual pair of curves, the normal and Darboux indicatrices of a knot conformation.  The analogous concepts are defined and results are derived for stick knots.
\end{abstract}

\maketitle

\section{Introduction} \label{sec:introduction}

	The bridge index and superbridge index of a knot are important invariants in knot theory.  They are defined as the minimum and maximum number of local maxima respectively in the projection of a conformation of a knot onto an axis, minimized over all conformations of the knot.  The bridge index was introduced by Schubert~\cite{Sc54} in~1954 to study companionship in satellite knots.  The superbridge index was introduced by Kuiper~\cite{Ku87} in~1987, and has proven to be a useful invariant in obtaining lower bounds on the stick number of knots, which is defined as the least number of straight line segments needed to be placed end-to-end to form the knot in space.  For example, Jin~\cite{Ji97} used the superbridge index to prove that the stick number of the $(p, q)$-torus knot~$T_{p, q}$ is equal to $2q$ for $2 \leq p < q < 2p$.

	For a fixed embedding~$K$ of a knot into $\mathbb{R}^3$ and any choice of axis, we can count the number of stationary points in the projection of the knot onto that axis.  This allows us to define a map on the unit sphere, the bridge map of the knot conformation~$K$, by assigning to each vector $\mathbf{v} \in S^2$  one half of the number of stationary points in the projection of the knot onto the axis defined by~$\mathbf{v}$.  The bridge index is then just one half of the minimum value of the bridge map, minimized over all conformations of the knot. The superbridge index is one half the maximum value of the bridge map, minimized over all conformations of the knot.
	
	In this paper, we study the graph of the bridge map, the (minimal) set of points in~$S^2$ that separates the sphere into open regions on which the bridge map is constant.  By interpreting the bridge map in terms of intersections of the tangent indicatrix of $K$ with great circles and using the concepts of dual and derivative curves as introduced by Arnold~\cite{Ar95} in the study of the geometry of spherical curves, we show that the bridge graph is the union of the binormal indicatrix of the knot conformation with its antipodal curve and a number of great circles.  We also define another map on~$S^2$, the inflection map of the knot conformation~$K$, which counts the number of inflection points and cusps in planar projections of~$K$, and show that its graph is the union of the tangent indicatrix of the knot conformation with its antipodal curve and a number of great circles.  This duality relationship is studied for another dual pair of curves, the normal and Darboux indicatrices of a knot conformation.  Analogous concepts are defined and results are derived for stick knots.

	This paper is organized as follows.  In Sec.~\ref{sec:preliminaries_smooth}, we review the necessary background from the differential geometry of closed space curves, define the bridge and inflection maps and their respective graphs for smooth knot conformations, and relate these maps to intersections of the spherical indicatrices with great circles.  In Sec.~\ref{sec:smooth_knots}, we introduce the concepts of dual and derivative curves as introduced by Arnold~\cite{Ar95} and prove the smooth knot versions of our main results.  In Sec.~\ref{sec:preliminaries_polygonal}, we review McRae's~\cite{Mc01} definition of dual and derivative curves of spherical polygons and Banchoff's~\cite{Ba82} construction of the spherical indicatrices of space polygons.  In Sec.~\ref{sec:polygonal_knots}, we derive analogues of the results in Sec.~\ref{sec:smooth_knots} for stick knots.  Finally, in Sec.~\ref{sec:discussion}, we discuss possible methods of solving existing open problems using the ideas developed in this paper.

\section{Preliminaries} \label{sec:preliminaries_smooth}
	
	Throughout this paper, we shall use the notation~$K$ to denote a particular embedding of a knot in $\mathbb{R}^3$, and $[K]$ to denote a knot type, that is, the equivalence class of embeddings that can be obtained from a particular one under ambient isotopy.  However, as in common parlance, we shall often refer to a knot type as a knot, and an embedding of a knot as a knot conformation.  We shall also use the terms ``smooth knot'' and ``stick knot'' when referring to smooth and polygonal conformations of a knot respectively.

	We follow Fenchel's~\cite{Fe51} definition of the Frenet trihedral and curvature.  Let~$s$, $0 \leq s \leq l$, denote the arc length and $\mathbf{r}(s)$ the position vector of a varying point on a space curve $K$.  We assume that the coordinates of $\mathbf{r}(s)$ are smooth.  Each point $\mathbf{r}(s)$ has an associated osculating plane, that is, a plane containing the tangent vector $\mathbf{t} = \mathbf{r}'(s)$ and the vector $\mathbf{r}''(s)$. We assume that a suitably oriented unit vector normal to the plane, $\mathbf{b}(s)$, the binormal vector of $K$, has smooth coordinates.  We also assume that the vectors $\mathbf{t}'$ and~$\mathbf{b}'$ do not vanish simultaneously, and in addition, that they vanish at only a finite number of points.  This implies that no arc of $K$ is contained in a plane.  This allows us to avoid the usual assumption that $\mathbf{r}''(s)$, and hence the curvature, never vanishes.  Finally, we make the additional assumption that all of the vectors in the collection of vectors $\pm \mathbf{t}$ at the set of points where $\mathbf{t}'$ vanishes are pairwise distinct, and similarly for~$\pm \mathbf{b}$ at the set of points where $\mathbf{b}'$ vanishes .  The geometric meaning of this assumption will be explained in Sec.~\ref{sec:smooth_knots}.

	We proceed to define the unit normal vector by
	\[\mathbf{n} = \mathbf{b} \times \mathbf{t}\]
and the curvature $\kappa$ and torsion $\tau$ by
	\[\mathbf{t}' = \kappa \mathbf{n} \qquad \textrm{and} \qquad \mathbf{b}' = -\tau \mathbf{n},\]
which is possible since the derivatives are orthogonal to both $\mathbf{t}$ and~$\mathbf{b}$.  The definition of the normal vector~$\mathbf{n}$ then yields
	\[\mathbf{n}' = -\kappa\mathbf{t} + \tau\mathbf{b}.\]
Thus we obtain the usual Frenet-Serret formulas for the movement of a point along a smooth curve:
	\begin{align*}
	\mathbf{t}' & = \kappa \mathbf{n}, \\
	\mathbf{n}' & = -\kappa\mathbf{t} + \tau\mathbf{b}, \\
	\mathbf{b}' & = - \tau \mathbf{n}.
	\end{align*}
Note, however, that in this circumstance,  the curvature may vanish and even be negative.  However, only a change in the sign of $\kappa$, and not the sign itself, has a geometric significance, since we may replace $\mathbf{b}$ and hence~$\mathbf{n}$ by its opposite vector (for all~$s$).  Points where $\kappa$ and~$\tau$ change sign will be called $\kappa$- and $\tau$-inflection points respectively.

	The vectors $(\mathbf{t}, \mathbf{n}, \mathbf{b})$ form a trihedral known as the \emph{Frenet trihedral} or \emph{Frenet frame}.  If this trihedral is parallel translated to the origin, the endpoints of the translated vectors describe three curves $T$, $N$, and~$B$ on the unit sphere $S^2 \in \mathbb{R}^3$, which are called the \emph{tangent}, \emph{normal}, and \emph{binormal indicatrices} of the curve respectively (or tantrix, notrix, and binotrix respectively for short).  The lengths of these indicatrices are given by the formulae
	\begin{align*}
	s_T & = \int_K \abs{\kappa(s)} \; \mathrm{d}s \\
	s_N & = \int_K \sqrt{\kappa^2(s) + \tau^2(s)} \; \mathrm{d}s \\
	s_B & = \int_K \abs{\tau(s)} \; \mathrm{d}s. \\
	\end{align*}
Note that the above formulae differ from the regular formulae by the addition of the absolute value symbol around $\kappa(s)$, which arises because we have allowed~$\kappa$ to take on negative values.

	The Frenet trihedral defines a rigid motion around the origin called \emph{Frenet motion}, with angular velocity $\omega = \omega(s)$.  The instantaneous axis of rotation of Frenet motion, which we call the \emph{Darboux axis}, lies in the normal plane containing $\mathbf{b}$ and~$\mathbf{t}$ because it is perpendicular to the velocity vectors $\mathbf{b}'$ and~$\mathbf{t}'$.  The unit Darboux vector~$\mathbf{d}$ is defined to be the unit vector on the Darboux axis such that its sense, together with the sense of rotation of the Frenet frame, form a right-handed screw.  From the orthogonality of $\mathbf{d}$ and~$\mathbf{n}'$, we obtain
	\[\omega\mathbf{d} = \mathbf{n} \times \mathbf{n'} = \kappa \mathbf{b} + \tau \mathbf{t}.\]
As $s$ varies, the endpoint of the Darboux vector describes yet another curve on~$S^2$, the Darboux indicatrix~$D$ of the curve.

	Next, we define the bridge map and bridge graph of a knot conformation and relate it to two knot invariants, the bridge index and superbridge index of a knot.  We also define the inflection map of a knot conformation and its corresponding graph.

\begin{definition}
	Given a smooth knot conformation~$K$ in $\mathbb{R}^3$, the \emph{bridge map} of the knot conformation~$K$ is the map defined on the unit sphere~$S^2$ by
	\[\mathfrak{b}_\mathbf{v}(K) = \#\{\mbox{stationary points in the projection of $K$ onto the axis defined by $\mathbf{v}$}\}\]
for every vector $\mathbf{v} \in S^2$.
	\label{def:bridgemap}
\end{definition}

	The bridge map is closely related to two knot invariants, the bridge index and superbridge index of a knot.

\begin{definition}[Schubert~\cite{Sc54}]
	The \emph{bridge index} of a knot type~$[K]$ is defined by
	\[b[K] = \min_{K \in [K]} \min_{\mathbf{v} \in S^2} \#\{\mbox{local maxima in the projection of $K$ onto the axis defined by $\mathbf{v}$}\}.\]
\end{definition}

	It is easy to see that the bridge index of a knot is related to the bridge map by
	\[b[K] = \frac{1}{2} \min_{K \in [K]} \min_{\mathbf{v} \in S^2} \mathfrak{b}_\mathbf{v}(K).\]

	The bridge index has been extensively studied.  Schubert~\cite{Sc54} proved that bridge index is additive minus one under composition: $b[K_1 \# K_2] = b[K_1] + b[K_2] - 1$. Knots with bridge index 2 are precisely the rational knots.
	
	Kuiper~\cite{Ku87} introduced a related knot invariant, the superbridge index of a knot.

\begin{definition}[Kuiper~\cite{Ku87}]
	The \emph{superbridge index} of a knot type~$[K]$ is defined by
	\[sb[K] = \min_{K \in [K]} \max_{\mathbf{v} \in S^2} \#\{\mbox{local maxima in the projection of $K$ onto the axis defined by $\mathbf{v}$}\}.\]
\end{definition}

	Similarly, the superbridge index of a knot type is related to the bridge map by
	\[b[K] = \frac{1}{2} \min_{K \in [K]} \max_{\mathbf{v} \in S^2} \mathfrak{b}_\mathbf{v}(K).\]

	The superbridge index is a useful invariant in obtaining lower bounds on the stick number of knots.  Jin~\cite{Ji97} used the superbridge index to show that the stick number of the $(p, q)$-torus knot~$T_{p, q}$ is equal to~$2q$ for $2 \leq p < q < 2p$.

\begin{definition}
	Given a smooth knot conformation~$K$ in~$\mathbb{R}^3$, the \emph{inflection map} of the knot conformation~$K$ is the map defined on the unit sphere~$S^2$ by
	\[\mathfrak{i}_\mathbf{v}(K) = \#\{\mbox{inflection points and cusps in the projection of $K$ onto the plane orthogonal to $\mathbf{v}$}\}\]
for every vector $\mathbf{v} \in S^2$.
	\label{def:inflectionmap}
\end{definition}

\begin{remark}
	Observe that a point~$p$ on the knot conformation~$K$ projects to a stationary point along the axis defined by~$\mathbf{v}$ if and only if the tangent vector to~$K$ at~$p$ is orthogonal to~$\mathbf{v}$.  Hence the stationary points in the projection of~$K$ onto the axis defined by~$\mathbf{v}$ are in bijective correspondence with the points of intersection of the tantrix of~$K$ with the great circle orthogonal to~$\mathbf{v}$.  Similarly, since~$p$ projects to an inflection point or a cusp in the plane orthogonal to~$\mathbf{v}$ if and only if the osculating plane at~$p$ is projected onto a line, which occurs if and only if the binormal vector at~$p$ is orthogonal to~$\mathbf{v}$, it follows that the inflection points and cusps in the projection of~$K$ onto the plane orthogonal to~$\mathbf{v}$  are in bijective correspondence with the points of intersection of the binotrix of~$K$ with the great circle orthogonal to~$\mathbf{v}$.  We shall make extensive use of this observation in the proof of Theorem~\ref{thm:bridgeinflection}.
	\label{rem:intersectionswithgreatcircles}
\end{remark}

\begin{lemma}[see, e.g., Blaschke~\cite{Bl37}]
	The length of a curve~$C$ on the unit sphere is equal to $\pi$~times the average over all great circles~$G$ of the number of times that $C$ intersects~$G$.
	\label{lem:intersectionswithgreatcircles}
\end{lemma}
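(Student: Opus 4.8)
The plan is to prove this as an instance of the spherical Crofton formula from integral geometry. First I would parametrize the space of great circles: each great circle on $S^2$ is the set of unit vectors orthogonal to some pole $\mathbf{v} \in S^2$, and the circle determined by $\mathbf{v}$ coincides with the one determined by $-\mathbf{v}$, so great circles correspond to antipodal pairs $\{\mathbf{v}, -\mathbf{v}\}$. I would take the averaging measure to be the standard rotation-invariant area measure $d\mathbf{v}$ on $S^2$ (total mass $4\pi$); since the integrand $n(\mathbf{v}) = \#(C \cap G_\mathbf{v})$, where $G_\mathbf{v}$ is the great circle with pole $\mathbf{v}$, is invariant under $\mathbf{v} \mapsto -\mathbf{v}$, averaging over $S^2$ gives the same ratio as averaging over the genuine space of great circles. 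The claim then reduces to showing $\int_{S^2} n(\mathbf{v})\, d\mathbf{v} = 4\,\mathrm{length}(C)$.

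Both sides of this identity are additive under concatenation of curves: length obviously so, and the intersection count because splitting $C$ into arcs distributes each intersection point among the pieces, for almost every $G$ (the great circles meeting $C$ at a subdivision point or tangentially form a measure-zero set). I would therefore reduce to the case of a single short geodesic arc by approximating $C$ with inscribed great-circle polygons and passing to the limit, invoking the convergence of length under such approximation together with a dominated-convergence argument for the intersection integral.

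For a single geodesic arc $A$ of length $\ell$, I would use the rotational invariance of $d\mathbf{v}$ to place $A$ on the equator, parametrized by $p(\theta) = (\cos\theta, \sin\theta, 0)$ for $\theta \in [0, \ell]$. Writing $\mathbf{v}$ in spherical coordinates $(\phi, \psi)$ so that $d\mathbf{v} = \cos\phi \, d\phi \, d\psi$, the incidence condition $\mathbf{v} \cdot p(\theta) = 0$ becomes $\cos\phi \cos(\theta - \psi) = 0$; hence, away from the poles, $G_\mathbf{v}$ meets the equator precisely at $\theta = \psi \pm \tfrac{\pi}{2}$. Counting which of these values lie in $[0, \ell]$ and integrating, the $\phi$-integral contributes $\int_{-\pi/2}^{\pi/2} \cos\phi \, d\phi = 2$ while the $\psi$-integral contributes $2\ell$, giving $\int_{S^2} n(\mathbf{v})\, d\mathbf{v} = 4\ell$. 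Dividing by the total mass $4\pi$ yields an average intersection number of $\ell/\pi$, that is, $\ell = \pi \cdot (\text{average})$, as desired.

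The main obstacle I anticipate is not the arc computation, which is elementary, but the rigorous justification of the reduction step: one must confirm that for the piecewise-smooth curves arising here the function $n(\mathbf{v})$ is measurable and integrable, that the set of great circles meeting $C$ non-generically has measure zero, and that both length and the intersection integral are stable under geodesic-polygonal approximation. Since these are the standard technical points underlying the classical Crofton formula, I would cite Blaschke for the general statement rather than reprove them in full, and present the equatorial computation as the conceptual heart of the argument.
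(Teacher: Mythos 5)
The paper does not actually prove this lemma: it is quoted as a classical result of integral geometry (the spherical Cauchy--Crofton formula) with a citation to Blaschke, and the only original content in the paper is the paragraph after the statement fixing the interpretation of ``average'' --- great circles are parametrized by their pairs of poles, and the measure is the one induced from Lebesgue measure on $S^2$. Your proposal supplies a genuine proof where the paper supplies none, and it is the standard one: your parametrization by poles and your normalization (total mass $4\pi$, antipodal invariance of the integrand) agree exactly with the paper's stated interpretation, your equatorial computation $\int_{S^2} n(\mathbf{v})\,d\mathbf{v} = 4\ell$ for a geodesic arc of length $\ell$ is correct (and is consistent with the sanity check $C$ = full great circle, where $n \equiv 2$ a.e.\ gives average $2$ and length $2\pi$), and the additivity-plus-approximation reduction is the standard route. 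The one technical wrinkle worth flagging: ``dominated convergence'' is not quite the right tool for passing from inscribed geodesic polygons $P_k$ to $C$, since a dominating function is not available before the formula is proved. The clean argument is one-sided: for a.e.\ great circle $G$ one has $n_{P_k}(G) \le n_C(G)$ (each polygon edge crossing $G$ has endpoints on $C$ on opposite sides of $G$, forcing a crossing of $C$ in between), which gives $4\,\mathrm{length}(P_k) \le \int n_C$, and in the other direction $n_C(G) \le \liminf_k n_{P_k}(G)$ for a.e.\ $G$ (transversal crossings are eventually captured), so Fatou's lemma gives $\int n_C \le \liminf_k \int n_{P_k} = 4\,\mathrm{length}(C)$; the two inequalities together close the argument. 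Since you explicitly defer these measure-theoretic points to Blaschke --- exactly as the paper defers the whole lemma --- this is a presentational correction rather than a gap.
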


     We interpret the word ``average'' in the above lemma as follows.  Corresponding to each great circle~$G$, there is a unique pair of unit vectors that are perpendicular to the plane containing $G$.  The usual Lebesgue measure on the unit sphere applied to them then induces a measure on the set of all great circles.  (The average is then interpreted as a Lebesgue integral.)

	Lemma~\ref{lem:intersectionswithgreatcircles}, together with Remark~\ref{rem:intersectionswithgreatcircles} and the fact that the bridge map is not constant for any conformation~$K$ of a non-trivial knot, implies that
	\[\textrm{total (absolute) curvature of $K$} = \textrm{length of tantrix of $K$} = \pi\angularbrack{\mathfrak{b}_\mathbf{v}(K)} > 2\pi b[K],\]
where $\angularbrack{\mathfrak{b}_\mathbf{v}(K)}$ denotes the average of the bridge map~$\mathfrak{b}_\mathbf{v}(K)$ over all $\mathbf{v} \in S^2$ under the usual Lebesgue measure.  Since the bridge index of any non-trivial knot is at least~$2$, this, as noted by Milnor in~\cite{Mi53}, yields the well-known theorem of F\'{a}ry~\cite{Fa49} and Milnor~\cite{Mi50} that the total (absolute) curvature of a conformation of any non-trivial knot is greater than~$4\pi$.

	The following definition will be used extensively in the statement of our main results.

\begin{definition}
	The \emph{graph} of a map on the unit sphere~$S^2$ is the set of points $p \in S^2$ such that there does not exist an open neighborhood~$N_p$ of~$p$ on~$S^2$ (with the standard Euclidean topology) such that the value of the map is constant for all points $q \in N_p$ at which the map is defined.
	\label{def:graph}
\end{definition}

	This definition can be interpreted as follows.  Label each point $\mathbf{v} \in S^2$ with the value of the map at that point.  This divides the sphere into several regions with the same label, and the interiors of those regions are separated by the graph of the map.

	The graphs of the bridge map and the inflection map, which we call the \emph{bridge graph} and \emph{inflection graph}, as well as two other graphs that we shall define later on, the tantrix-bridge graph and tantrix-inflection graph, form the subject of the remainder of this paper.

\section{Duality Relationships for Indicatrices of Smooth Knots} \label{sec:smooth_knots}

	In this section, we review the concepts of the dual curve and derivative curve of a co-oriented curve on a sphere as introduced by Arnold~\cite{Ar95} and use them to determine the bridge graph and inflection graph of smooth knots.  We also use the concepts of dual and derivative curves to study the relationships between another dual pair of curves, the notrix and Darboux indicatrix.

	For our purposes, a co-orientation of a vector in a plane is a choice of one of the two unit vectors perpendicular to it in the plane, and a co-orientation of a spherical curve is a continuous choice of co-orientations of tangent vectors in their tangent planes.  A wave front is a curve obtained from a smooth co-oriented curve by moving each point of the curve by a constant distance along the co-orienting normal.  We refer the reader to~\cite{Ar95} for a more general definition of co-orientations and wave fronts in the context of contact geometry.

\begin{definition}[Arnold~\cite{Ar95}]
	The \emph{dual curve}~$\Gamma^\vee$ to a given co-oriented curve~$\Gamma$ on the sphere is the curve obtained from the original curve by moving a distance of~$\pi/2$ along the normals on the side determined by the co-orientation.  The dual curve~$\Gamma^\vee$ inherits its co-orientation from that of~$\Gamma$.
	\label{def:dual}
\end{definition}

	This definition applies not only to smoothly immersed curves, but also to wave fronts having semi-cubical cusps.  The cusps on the original curve correspond to points of inflection on the dual curve, while points of inflection on the original curve correspond to cusps on the dual curve.

\begin{definition}[Arnold~\cite{Ar95}]
	The \emph{derivative curve}~$\Gamma'$ of a co-oriented curve~$\Gamma$ on the oriented standard sphere~$S^2$ is the curve obtained by moving each point a distance $\pi/2$ along the great circle tangent to the original curve at that point.  The direction of motion along the tangent is chosen so that the orientation of the sphere, given by the direction of the tangent and the direction of the co-orienting normal, is positive.
	\label{def:derivative}
\end{definition}

	For example, if the tantrix is co-oriented such that the co-orienting normal is obtained from the derivative of the tangent vector via a counterclockwise rotation of~$\pi/2$ on the surface of the sphere when the curvature~$\kappa$ is positive and via a clockwise rotation of~$\pi/2$ when $\kappa$~is negative, then it is easy to see that the derivative curve of the tantrix is the notrix and the dual curve of the tantrix is the binotrix.  Note that the co-orientation of the binotrix is thus induced by the co-orientation on the tantrix. In what follows, we shall always assume that the tantrix is oriented as such.  We refer the reader to Arnold's~\cite{Ar95} paper for a more thorough discussion of the geometry of spherical curves.

	A cusp of the tantrix and a spherical inflection point of the binotrix corresponds to a $\kappa$-inflection point of the knot conformation~$K$, while a spherical inflection point of the tantrix and a cusp of the binotrix corresponds to a $\tau$-inflection point of~$K$.  (See, e.g., Fenchel~\cite{Fe51} for more details.)  Our assumption in Sec.~\ref{sec:preliminaries_smooth} that the vectors in the collection of vectors $\pm \mathbf{t}$ at the set of points where $\mathbf{t}'$ vanishes are pairwise distinct and the vectors in the collection of vectors $\pm \mathbf{b}$ at the set of points where $\mathbf{b}'$ vanishes are pairwise distinct simply says that no two cusps of the tantrix coincide and a cusp of the tantrix never coincides with a cusp of the anti-tantrix, and similarly for the binotrix.

	The dual and derivative curves of a co-oriented spherical curve satisfy the following properties.

\begin{lemma}[Arnold~\cite{Ar95}]
The dual curve is formed from the centers of the great circles tangent to the original curve.  The second dual of a curve is antipodal to the original curve: $\Gamma^{\vee\vee} = -\Gamma$.
	\label{lem:seconddual}
\end{lemma}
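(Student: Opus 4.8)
The plan is to carry out both assertions by a direct computation in the moving orthonormal frame adapted to $\Gamma$, reducing everything to elementary cross-product identities on $S^2$.

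First I would parametrize $\Gamma$ by arc length and set up the spherical frame $(\Gamma, \mathbf{T}, \mathbf{m})$, where $\mathbf{T} = \Gamma'$ is the unit tangent and $\mathbf{m} = \Gamma \times \mathbf{T}$ is the chosen co-orienting normal; these are mutually orthogonal unit vectors, with $\mathbf{m}$ tangent to the sphere at $\Gamma(s)$ and orthogonal to $\mathbf{T}$. Moving a geodesic distance $t$ from $\Gamma(s)$ in the co-orienting direction traces the great-circle arc $\gamma(t) = \cos t\,\Gamma(s) + \sin t\,\mathbf{m}(s)$, so evaluating at $t = \pi/2$ immediately gives the position of the dual curve, $\Gamma^\vee(s) = \mathbf{m}(s)$. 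For the first assertion I would then note that the great circle tangent to $\Gamma$ at $\Gamma(s)$ is the intersection of $S^2$ with the plane spanned by $\Gamma(s)$ and $\mathbf{T}(s)$; its center (pole) is the unit normal to that plane, i.e. $\pm\,\Gamma(s) \times \mathbf{T}(s) = \pm\,\mathbf{m}(s)$. Since $\Gamma^\vee(s) = \mathbf{m}(s)$, the dual curve is precisely the locus of these poles, the co-orientation singling out one of the two.

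The second assertion is where the real content lies, and the crux is to identify correctly the co-orientation that $\Gamma^\vee$ inherits. I would argue that the defining normal geodesic $\gamma$ carries the co-orientation forward: its velocity at the dual point $t = \pi/2$ is $\gamma'(\pi/2) = -\Gamma(s)$, and this is the inherited co-orienting normal $\mathbf{m}^\vee(s)$ of $\Gamma^\vee$. To see that this is legitimate, I would differentiate $\Gamma^\vee = \mathbf{m}$ using the frame equations $\mathbf{T}' = -\Gamma + k_g\mathbf{m}$ and $\mathbf{m}' = -k_g\mathbf{T}$ (with $k_g$ the geodesic curvature), obtaining $(\Gamma^\vee)' = -k_g\mathbf{T}$; hence $-\Gamma$ is a unit vector lying in the tangent plane at $\mathbf{m}(s)$ and orthogonal to the tangent of $\Gamma^\vee$, exactly as a co-orienting normal must be. Applying the dual construction once more now moves a distance $\pi/2$ from $\mathbf{m}(s)$ along $\mathbf{m}^\vee = -\Gamma$, giving
\[
\Gamma^{\vee\vee}(s) = \cos\tfrac{\pi}{2}\,\mathbf{m}(s) + \sin\tfrac{\pi}{2}\,(-\Gamma(s)) = -\Gamma(s),
\]
as claimed.

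I expect the main obstacle to be precisely the bookkeeping of the co-orientation, since all of the content of the lemma sits in a sign: one must justify that the inherited co-orienting normal of $\Gamma^\vee$ is $-\Gamma$ rather than $+\Gamma$ (reassuringly, the computation of $\gamma'(\pi/2)$ returns $-\Gamma$ independently of which of the two co-orientations $\pm\mathbf{m}$ one starts from, so the conclusion is robust under the initial choice). A secondary point to address is the behavior at the cusps of a wave front, where $k_g$ vanishes and changes sign and $\Gamma^\vee$ develops a semicubical cusp; there the frame degenerates, but the position identities $\Gamma^\vee = \mathbf{m}$ and hence $\Gamma^{\vee\vee} = -\Gamma$ persist by continuity, so the conclusion holds for wave fronts as well as for smoothly immersed curves.
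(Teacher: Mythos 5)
Your proof is correct, but note that the paper itself gives no proof of this lemma at all: it is quoted verbatim from Arnold~\cite{Ar95} as known background, so your argument should be compared with Arnold's treatment rather than with anything in the text. What you supply is a self-contained coordinate verification in the adapted frame $(\Gamma,\mathbf{T},\mathbf{m})$, and the computations check out: the frame equations $\mathbf{T}'=-\Gamma+k_g\mathbf{m}$, $\mathbf{m}'=-k_g\mathbf{T}$ are right, $\Gamma^\vee=\mathbf{m}=\Gamma\times\mathbf{T}$ is indeed the pole of the tangent great circle, and you correctly isolate the one point where all the content lives, namely that the inherited co-orienting normal of $\Gamma^\vee$ is $\gamma'(\pi/2)=-\Gamma$ rather than $+\Gamma$; your observation that this is independent of the initial choice $\pm\mathbf{m}$ is a nice sanity check. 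Two remarks. First, your two-step computation can be packaged even more cleanly: the second dual is by construction the time-$\pi$ wave front of the normal (geodesic) flow, so $\Gamma^{\vee\vee}(s)=\gamma(\pi)=\cos\pi\,\Gamma(s)+\sin\pi\,\mathbf{m}(s)=-\Gamma(s)$ in one line; this is essentially Arnold's own point of view, in which duality is the $\pi/2$-advance map of the front and the lemma is immediate. Second, your continuity argument at cusps is slightly glib as stated---at a cusp the arc-length frame does not exist, so one should say that the dual construction needs only the co-orienting normal (which extends continuously through cusps of a front), and then both identities extend by continuity of both sides; this matters here because the paper applies the lemma precisely to the tantrix, which has cusps at the $\kappa$-inflection points, so the wave-front case is not a side remark but the case actually used in the proof of Theorem~\ref{thm:bridgeinflection}.
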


\begin{lemma}[Arnold~\cite{Ar95}]
The derivative of a curve coincides with the derivative of any curve equidistant from it and is a smoothly immersed curve on~$S^2$ even if the original curve has generic singularities.
	\label{lem:smoothderivative}
\end{lemma}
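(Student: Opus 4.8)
The plan is to work with the smooth Legendrian lift of the co-oriented curve $\Gamma$ and to express the derivative curve purely in terms of this lift. Concretely, I would parametrize $\Gamma$ by a parameter $u$ and record at each $u$ the orthonormal frame $(\mathbf{T}, \boldsymbol{\nu}, \mathbf{p})$, where $\mathbf{p}(u) \in S^2$ is the point of the curve, $\boldsymbol{\nu}(u)$ is the co-orienting unit normal, and $\mathbf{T}(u) = \mathbf{p}(u) \times \boldsymbol{\nu}(u)$ is the tangent direction. The essential point, which I would emphasize at the outset, is that a co-oriented wave front is by definition the projection of a smooth curve $u \mapsto (\mathbf{p}(u), \boldsymbol{\nu}(u))$ in the space of contact elements, so that $\mathbf{p}$, $\boldsymbol{\nu}$, and hence $\mathbf{T}$ are smooth functions of $u$ even where $\Gamma$ has a cusp. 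The contact condition $\mathbf{p}' \cdot \boldsymbol{\nu} = 0$, together with $\mathbf{p} \cdot \mathbf{p} = 1$, forces $\mathbf{p}'$ to be a scalar multiple of $\mathbf{T}$, and a parallel computation gives the same for $\boldsymbol{\nu}'$. I would therefore write $\mathbf{p}' = a\mathbf{T}$ and $\boldsymbol{\nu}' = b\mathbf{T}$ for smooth scalar functions $a, b$, noting that a cusp of $\Gamma$ is exactly a zero of $a$.

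Next I would identify the derivative curve in these terms. The great circle tangent to $\Gamma$ at $\mathbf{p}$ is the circle through $\mathbf{p}$ in the direction $\mathbf{T}$, and moving a distance $\pi/2$ along it reaches $\pm\mathbf{T}$; the orientation prescription in Definition~\ref{def:derivative} selects the sign, and I would pin it down by checking it against the worked example preceding Lemma~\ref{lem:seconddual}, namely that the derivative curve of the tantrix is the notrix. This yields the clean formula $\Gamma'(u) = \mathbf{T}(u) = \mathbf{p}(u) \times \boldsymbol{\nu}(u)$, which is the engine for both assertions.

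For the first assertion I would compute the frame of an equidistant curve directly. The curve at distance $\alpha$ along the co-orienting normal has lift $\mathbf{p}_\alpha = \cos\alpha\,\mathbf{p} + \sin\alpha\,\boldsymbol{\nu}$ with co-orienting normal $\boldsymbol{\nu}_\alpha = -\sin\alpha\,\mathbf{p} + \cos\alpha\,\boldsymbol{\nu}$, and a short cross-product calculation collapses the cross terms via $\cos^2\alpha + \sin^2\alpha = 1$ to give $\mathbf{p}_\alpha \times \boldsymbol{\nu}_\alpha = \mathbf{p} \times \boldsymbol{\nu} = \mathbf{T}$. Hence the derivative curve of every equidistant curve is the same curve $\mathbf{T}$, which is precisely the first assertion. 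For the second assertion I would differentiate $\mathbf{T} = \mathbf{p}\times\boldsymbol{\nu}$, obtaining $\mathbf{T}' = a\,(\mathbf{T}\times\boldsymbol{\nu}) + b\,(\mathbf{p}\times\mathbf{T}) = -a\,\mathbf{p} - b\,\boldsymbol{\nu}$ after using $\mathbf{T}\times\boldsymbol{\nu} = -\mathbf{p}$ and $\mathbf{p}\times\mathbf{T} = -\boldsymbol{\nu}$. Since the Legendrian lift is an immersion — which for a wave front with only generic singularities means that $\mathbf{p}'$ and $\boldsymbol{\nu}'$ never vanish together, i.e. $(a,b) \neq (0,0)$ — the vector $\mathbf{T}'$ never vanishes, so $\Gamma'$ is a smooth immersion. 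For the indicatrices this non-simultaneous vanishing is exactly the standing assumption of Sec.~\ref{sec:preliminaries_smooth} that $\mathbf{t}'$ and $\mathbf{b}'$ do not vanish simultaneously.

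I expect the main obstacle to be the behavior at the cusps of $\Gamma$, where the naive arc-length parametrization of $\Gamma$ degenerates and $a = 0$. The resolution, and the crux of the argument, is the observation that $\mathbf{T} = \mathbf{p}\times\boldsymbol{\nu}$ depends only on the smooth contact data $(\mathbf{p},\boldsymbol{\nu})$ and not on $\mathbf{p}'$ directly; thus $\mathbf{T}$ extends smoothly across a cusp, and at such a point $\mathbf{T}' = -b\,\boldsymbol{\nu} \neq 0$ keeps the derivative curve immersed. Everything else is routine frame bookkeeping, and the orientation sign-check against the tantrix--notrix example is the only other place demanding care.
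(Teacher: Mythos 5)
The paper contains no proof of this lemma for you to be compared against: it is stated with an attribution to Arnold~\cite{Ar95} and imported as a known result, so the only question is whether your argument stands on its own. It does. Your proof is essentially the standard contact-geometric argument (in the spirit of Arnold's own treatment): lift the front to the smooth Legendrian curve $u \mapsto (\mathbf{p}(u), \boldsymbol{\nu}(u))$, deduce from $\mathbf{p}\cdot\mathbf{p} = 1$, $\boldsymbol{\nu}\cdot\boldsymbol{\nu} = 1$, $\mathbf{p}\cdot\boldsymbol{\nu} = 0$ and the contact condition that $\mathbf{p}' = a\,\mathbf{T}$ and $\boldsymbol{\nu}' = b\,\mathbf{T}$ with $\mathbf{T} = \mathbf{p}\times\boldsymbol{\nu}$, then check that the equidistant front has lift $(\cos\alpha\,\mathbf{p} + \sin\alpha\,\boldsymbol{\nu},\, -\sin\alpha\,\mathbf{p} + \cos\alpha\,\boldsymbol{\nu})$ whose cross product is again $\mathbf{T}$, while $\mathbf{T}' = -a\,\mathbf{p} - b\,\boldsymbol{\nu}$ has norm $\sqrt{a^2+b^2} \neq 0$ exactly because the Legendrian lift is immersed; both computations are correct, and your identification of the immersion condition with the paper's standing assumption that $\mathbf{t}'$ and $\mathbf{b}'$ never vanish simultaneously is the right link to Sec.~\ref{sec:preliminaries_smooth}. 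The one detail to be careful about is the sign: with the sphere oriented by the outward normal, Definition~\ref{def:derivative} together with the paper's co-orientation of the tantrix gives $\Gamma' = \boldsymbol{\nu}\times\mathbf{p}$ (so that the derivative of the tantrix is $\mathbf{b}\times\mathbf{t} = \mathbf{n}$, the notrix), which is $-\mathbf{T}$ in your notation rather than $+\mathbf{T}$. Since equidistant curves produce the same vector $\mathbf{p}\times\boldsymbol{\nu}$ and negation preserves smooth immersions, neither assertion is affected by this choice, and the sign check against the tantrix--notrix example that you propose would resolve it in any case.
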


	Since the notrix and the Darboux indicatrix have the same derivative curves and the normal and Darboux vectors are mutually orthogonal, it follows from Lemma~\ref{lem:smoothderivative} that the notrix and Darboux indicatrix are dual curves.  In what follows, we shall always orient the notrix such that the co-orienting normal is obtained from the derivative of the normal vector via a counterclockwise rotation of~$\pi/2$ on the surface of the sphere; then the Darboux indicatrix is the dual curve of the notrix.  Since the notrix is the derivative curve of the tantrix, Lemma~\ref{lem:smoothderivative} tells us that it has no cusps  (this  also follows from the fact that $\omega = \sqrt{\kappa^2 + \tau^2)} > 0$ because $\kappa$ and~$\tau$ do not vanish simultaneously), and hence the Darboux indicatrix has no spherical inflection points.  Inflection points of the notrix and cusps of the Darboux indicatrix correspond to points where the geodesic curvature $\tau/\kappa$ of the tantrix is stationary and the knot behaves locally like a helix.  (See Fenchel~\cite{Fe51} or Uribe-Vargas~\cite{Ur04} for further details.)  In what follows, we shall also assume that no two cusps of the Darboux indicatrix coincide and a cusp of the Darboux indicatrix never coincides with a cusp of the anti-Darboux indicatrix.

	Since the notrix is the curve of normalized derivatives of the tantrix and the binotrix, we may view it as the ``tantrix of the tantrix'' or the ``tantrix of the binotrix'' up to a sign.  Similarly, since the Darboux indicatrix is the dual curve to the notrix, we may view it as the ``binotrix of the tantrix'' or the ``binotrix of the binotrix.''  In what follows, we shall focus our interpretation of the notrix and Darboux indicatrix as the tantrix and binotrix respectively of the tantrix; similar results hold if we interpret them as the tantrix and binotrix respectively of the binotrix.  This allows us to define a ``bridge map'' and ``inflection map'' for the tantrix analogous to Definitions~\ref{def:bridgemap} and~\ref{def:inflectionmap}.
	
\begin{definition} 	Given a smooth knot conformation~$K$ in $\mathbb{R}^3$, the \emph{tantrix-bridge map} of the knot conformation~$K$ is the map defined on the unit sphere~$S^2$ by
	\[\mathfrak{b}_\mathbf{v}(K) = \#\{\mbox{stationary points in the projection of $T$ onto the axis defined by $\mathbf{v}$}\}\]
for every vector $\mathbf{v} \in S^2$.
	\label{def:tanbridgemap}
\end{definition}	

\begin{definition} Given a smooth knot conformation~$K$ in~$\mathbb{R}^3$, the \emph{tantrix-inflection map} of the knot conformation~$K$ is the map defined on the unit sphere~$S^2$ by
	\[\mathfrak{i}_\mathbf{v}(K) = \#\{\mbox{inflection points and cusps in the projection of $T$ onto the plane orthogonal to $\mathbf{v}$}\}\]
for every vector $\mathbf{v} \in S^2$.
	\label{def:taninflectionmap}
\end{definition}	
	
In analogy to Remark  \ref{rem:intersectionswithgreatcircles},  we can  reinterpret these maps in terms of intersections of the corresponding great circles with the notrix and Darboux indicatrix respectively.  Moreover, we can define the \emph{tantrix-bridge graph} and \emph{tantrix-inflection graph} as in Definition  \ref{def:graph}.  However, some care must be taken at cusps of the tantrix: when do we count a cusp of the tantrix as contributing to a stationary point in height to the tantrix-bridge map?  Returning to our interpretation of the tantrix-bridge map as the number of intersections of the notrix with a great circle, we see we want the derivative of the tantrix to lie in the plane of that great circle, and hence a cusp of the tantrix should be counted as a stationary point in height only if the derivative at that point is perpendicular to the height axis.  Similar arguments show that the only cusps that should contribute to an inflection point or cusp in the tantrix-inflection map are those that lie on the great circle in the projection.

	In the following two theorems, we determine the bridge graph, the inflection graph, the tantrix-bridge graph and the tantrix-inflection graph for smooth knots.

\begin{theorem}
	The bridge graph of a smooth knot is the union of the binotrix, the anti-binotrix, and the great circles tangent to the binotrix and anti-binotrix at points corresponding to $\kappa$-inflection points of the knot.  The inflection graph of a smooth knot is the union of the tantrix, the anti-tantrix, and the great circles tangent to the tantrix and anti-tantrix at points corresponding to $\tau$-inflection points of the knot.
	\label{thm:bridgeinflection}
\end{theorem}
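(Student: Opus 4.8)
The plan is to work entirely through the interpretation in Remark~\ref{rem:intersectionswithgreatcircles}: the bridge map sends $\mathbf{v}$ to the number $\#(T \cap G_\mathbf{v})$ of intersection points of the tantrix $T$ with the great circle $G_\mathbf{v}$ orthogonal to $\mathbf{v}$, and the inflection map sends $\mathbf{v}$ to $\#(B \cap G_\mathbf{v})$. By Definition~\ref{def:graph}, the bridge graph is exactly the locus of $\mathbf{v}$ at which this intersection number fails to be locally constant. First I would observe that the complement of the graph is open, so the graph is closed; this lets me establish the statement on a dense subset and then pass to the closure. The heart of the matter is to identify where the count $\#(T \cap G_\mathbf{v})$ jumps as $\mathbf{v}$ varies. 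Such a jump can only occur when $G_\mathbf{v}$ meets $T$ non-transversally, which happens in exactly two ways: either $G_\mathbf{v}$ is tangent to $T$ at a regular (non-cusp) point, or $G_\mathbf{v}$ passes through a cusp of $T$.

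For the first case, I would invoke Lemma~\ref{lem:seconddual}: the poles of the great circles tangent to $T$ are precisely the points of the dual curve $T^\vee = B$ together with their antipodes, that is, the set $B \cup (-B)$. Thus $G_\mathbf{v}$ is tangent to $T$ at a regular point exactly when $\mathbf{v} \in B \cup (-B)$, and at a generic such point the tangency is an ordinary (order-two) contact, so the count changes by $\pm 2$ as $\mathbf{v}$ crosses $B$. The only regular points excluded are the spherical inflection points of $T$ (the $\tau$-inflections of $K$), which coincide with the cusps of $B$; these form a finite set, so the points where the count genuinely jumps are dense in $B \cup (-B)$, and closedness of the graph then recovers all of $B \cup (-B)$.

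For the second case (cusps of $T$, i.e.\ $\kappa$-inflection points $s_0$ of $K$), I would note that $G_\mathbf{v}$ passes through the cusp $\mathbf{t}(s_0)$ exactly when $\mathbf{v} \perp \mathbf{t}(s_0)$, i.e.\ when $\mathbf{v}$ lies on the great circle $G_{\mathbf{t}(s_0)}$. A short computation identifies this great circle with the great circle tangent to $B$ at the inflection point $\mathbf{b}(s_0)$: the tangent direction to $B$ there is $\mathbf{b}'(s_0) = -\tau(s_0)\mathbf{n}(s_0)$, so the tangent great circle lies in the plane spanned by $\mathbf{b}(s_0)$ and $\mathbf{n}(s_0)$, which is exactly $G_{\mathbf{t}(s_0)}$. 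To show these great circles really belong to the graph, I would analyze the standard semicubical model $u \mapsto (u^2, u^3)$ of the cusp in the tangent plane at $\mathbf{t}(s_0)$: writing $G_\mathbf{v}$ locally as a line $\alpha x + \beta y = \delta$, the intersections with the cusp are the real roots of $\beta u^3 + \alpha u^2 = \delta$, and one checks that two roots near $u=0$ appear or disappear as $\delta$ changes sign, provided $\alpha \neq 0$. This shows the count jumps by $\pm 2$ across $G_{\mathbf{t}(s_0)}$ at every point except where the line is tangent to the cusp direction, which is precisely $\mathbf{v} = \pm\mathbf{b}(s_0)$, already accounted for in $B \cup (-B)$. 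This local cusp analysis is the step I expect to be the main obstacle, since one must correctly match the orientation and co-orientation conventions and confirm that no other intersections cancel the jump.

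Combining the two cases gives both containments: if $\mathbf{v}$ avoids $B \cup (-B)$ and all of the great circles $G_{\mathbf{t}(s_0)}$, then $G_\mathbf{v}$ meets $T$ transversally in regular points and the count is locally constant, so $\mathbf{v}$ is off the graph; conversely the two cases place every point of $B \cup (-B)$ and of each $G_{\mathbf{t}(s_0)}$ on the graph. This proves the first assertion. The second assertion is the exact dual: the inflection map is $\#(B \cap G_\mathbf{v})$, so the same argument applies with $B$ in place of $T$, using $B^\vee = T^{\vee\vee} = -T$ (Lemma~\ref{lem:seconddual}) to identify the tangency locus with $T \cup (-T)$, and identifying the great circles through the cusps of $B$ (the $\tau$-inflection points of $K$) with the great circles tangent to $T$ and $-T$ at the corresponding spherical inflection points.
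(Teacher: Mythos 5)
Your proposal is correct and takes essentially the same approach as the paper: both reduce the bridge map to intersection counts of the tantrix with great circles, invoke Arnold's duality (Lemma~\ref{lem:seconddual}) to identify the locus of tangency directions with the binotrix and anti-binotrix, and identify the great circles dual to cusps of the tantrix with the great circles tangent to the binotrix and anti-binotrix at points corresponding to $\kappa$-inflection points, with the inflection graph statement following by the dual argument. The only differences are cosmetic: you recover the degenerate points (cusps of $B$, crossings of jump loci) by a density-plus-closedness argument and an explicit semicubical local model, where the paper argues directly from the local geometry of the cusp region of the binotrix and from its standing assumption that cusps of the tantrix and anti-tantrix are pairwise distinct.
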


\begin{proof}
	We prove the first statement.  There are three types of intersections of the tantrix with a great circle, in the vicinity of which the number of intersections of the tantrix with a great circle may change: (i) the great circle is tangential to the tantrix neither at a spherical inflection point of the tantrix nor at a cusp (Fig~\ref{fig:tantrix_tangential}), (ii) the great circle is tangential to the tantrix at a spherical inflection point of the tantrix (Fig~\ref{fig:tantrix_sphericalinflection}), and (iii) the great circle intersects the tantrix at a semi-cubical cusp of the tantrix (Fig~\ref{fig:tantrix_cusp}).	  
	\begin{figure}[ht]
		\centering
		\subfloat[Great circle is tangential to tantrix neither at a spherical inflection point of the tantrix nor at a cusp.]{\label{fig:tantrix_tangential}\includegraphics[width=0.3\textwidth]{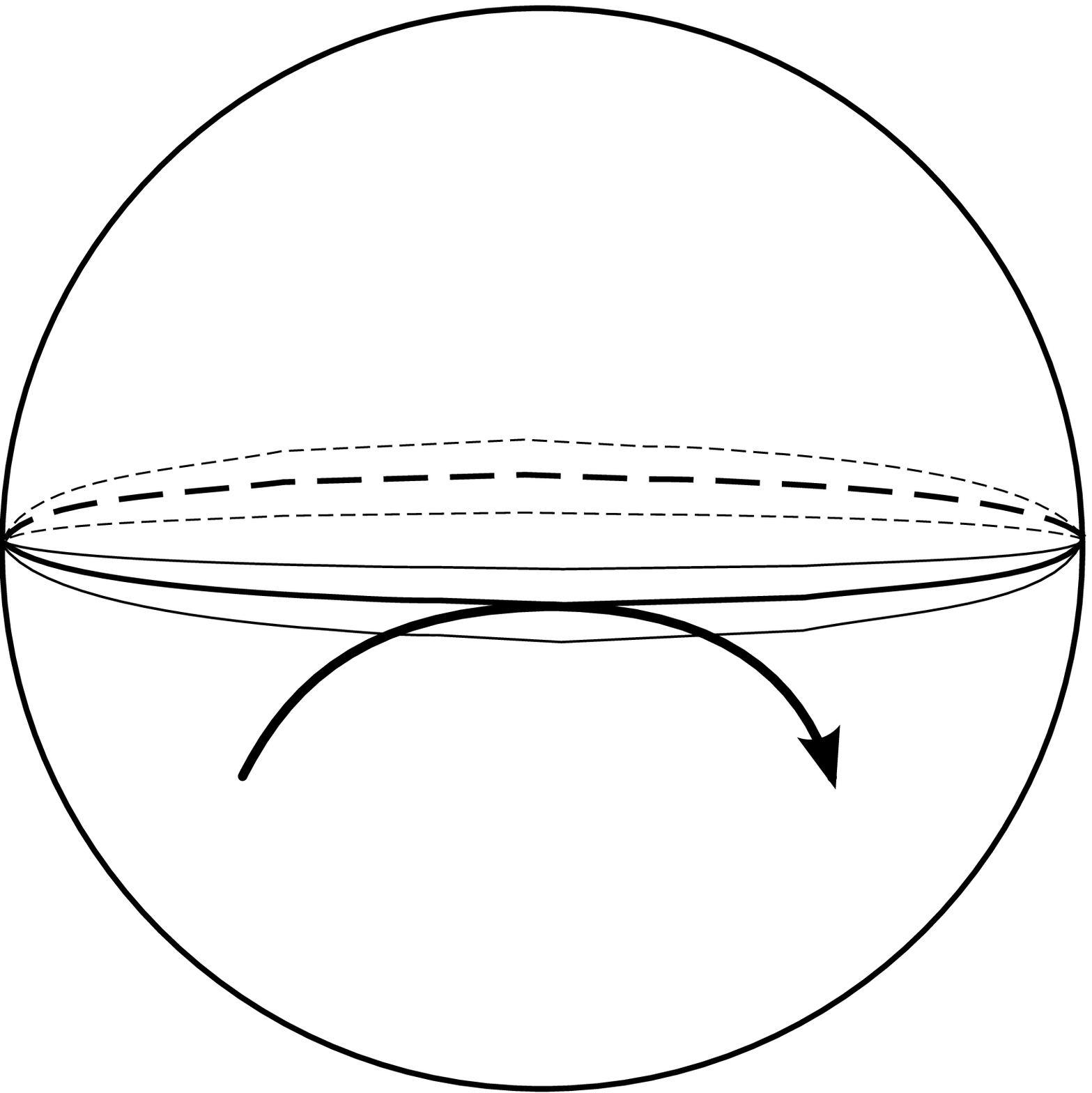}}\quad                
		\subfloat[Great circle is tangential to the tantrix at a spherical inflection point of the tantrix.]{\label{fig:tantrix_sphericalinflection}\includegraphics[width=0.3\textwidth]{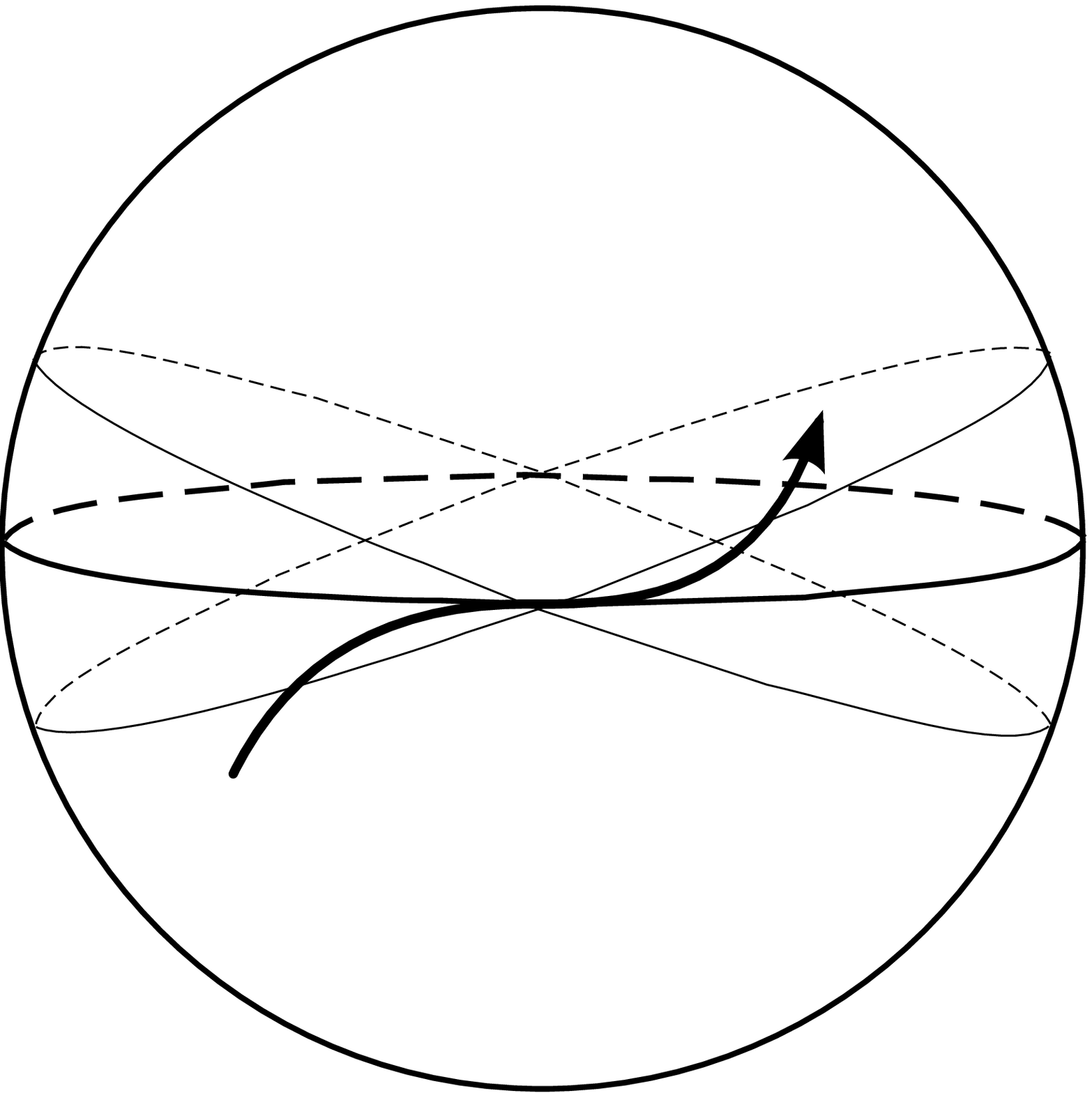}}\quad
		\subfloat[Great circle intersects the tantrix at a semi-cubical cusp of the tantrix.]{\label{fig:tantrix_cusp}\includegraphics[width=0.3\textwidth]{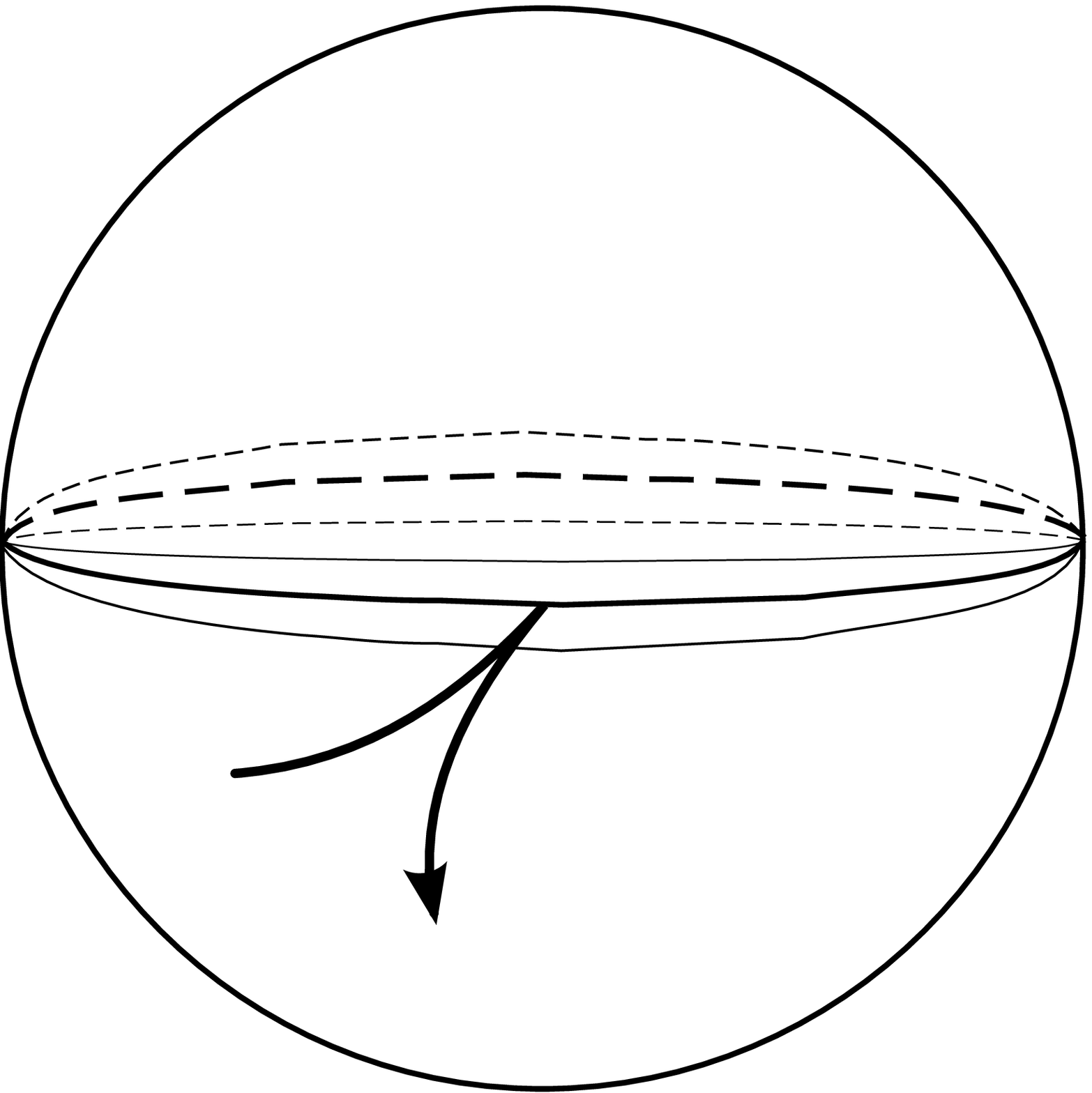}}
		\caption{Types of intersections of the tantrix with a great circle, in the vicinity of which the number of intersections of the tantrix with a great circle may change.}
		\label{fig:tantrix_intersections}
	\end{figure}

	Intersections of the first type contribute a portion of the bridge graph that is formed from the centers of the great circles tangent to the tantrix at these points, which by Lemma~\ref{lem:seconddual} is the dual curve to the tantrix and its antipodal curve, the binotrix and anti-binotrix, excluding cusp points on both curves.  Next, a change in the number of intersections of the tantrix with a great circle in the vicinity of a spherical inflection point of the tantrix corresponds to moving the center of a great circle from outside a cusp of the binotrix to inside (and similarly for the anti-binotrix), or vice versa, as shown in Fig.~\ref{fig:binotrix_cusp}, as the limiting tangent at a cusp of the binotrix is obtained from the tangent to the tantrix at a spherical inflection point by moving a distance of~$\pi/2$ along the normal on the side determined by the co-orientation.  Finally, the number of intersections of the tantrix with a great circle changes as we move the great circle over a cusp in almost every direction (by our assumption that no two cusps of the tantrix coincide and a cusp of the tantrix never coincides with a cusp of the anti-tantrix).  Hence the portion of the bridge graph that such intersections contribute to are great circles whose centers are at those cusps, and which are therefore great circles tangent to the binotrix and anti-binotrix at points corresponding to $\kappa$-inflection points of the knot.
	\begin{figure}[ht]
		\centering \includegraphics[width=0.3\textwidth]{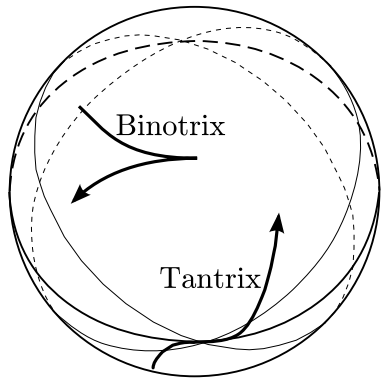}
		\caption{A change in the number of intersections of the tantrix with a great circle in the vicinity of a spherical inflection point of the tantrix corresponds to moving the center of a great circle from outside a cusp of the binotrix to inside or vice versa.}
		\label{fig:binotrix_cusp}
	\end{figure}
	
	The proof of the second statement follows analogously upon noting that cusps in the binotrix correspond to $\tau$-inflection points of the knot.
\end{proof}

\begin{remark}
	In fact, a little additional work enables us to identify the direction in which the value of the bridge map changes whenever we cross the bridge graph.  In general, the value of the bridge map increases when we move across the bridge graph from a region that locally ``bulges out'' to an adjacent region that locally ``bulges in.''  The change in value of the bridge map as we move across a great circle defining the bridge graph can be determined from the shape of the binotrix near a $\kappa$-inflection point.  Similar observations apply to the inflection graph.  The details are relatively straightforward and will be omitted.
\end{remark}

	Since the notrix has no cusps while the Darboux indicatrix has cusps precisely when the geodesic curvature~$\tau/\kappa$ of the tantrix is stationary, a similar argument yields the following theorem.

\begin{theorem}
	The tantrix-bridge graph is the union of the Darboux indicatrix and the anti-Darboux indicatrix.  The tantrix-inflection graph is the union of the notrix, the anti-notrix, and the great circles tangent to the notrix and the anti-notrix at the points corresponding to where the geodesic curvature~$\tau/\kappa$ of the tantrix is stationary.
	\label{thm:tantrix-bridgeinflection}
\end{theorem}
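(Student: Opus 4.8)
The plan is to run the argument of Theorem~\ref{thm:bridgeinflection} almost verbatim, but with the notrix~$N$ playing the role of the tantrix in the analysis of the tantrix-bridge graph and with the Darboux indicatrix~$D$ playing the role of the binotrix in the analysis of the tantrix-inflection graph. As explained before the statement, the tantrix-bridge map of~$K$ equals the number of intersections of~$N$ with the great circle orthogonal to~$\mathbf{v}$, and the tantrix-inflection map equals the number of intersections of~$D$ with that great circle. For each map I would classify the intersections into the same three types as in Theorem~\ref{thm:bridgeinflection}: (i)~a generic tangency of the great circle with the curve, (ii)~a tangency at a spherical inflection point, and (iii)~a transversal crossing of a cusp.

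First I would determine the tantrix-bridge graph. The type-(i) tangencies with~$N$ contribute the centers of the tangent great circles, which by Lemma~\ref{lem:seconddual} trace out the dual curve~$N^\vee$ and its antipode; since $N^\vee = D$, this yields the smooth portions of $D$ and $-D$. The type-(ii) transitions occur at spherical inflection points of~$N$, which under Arnold's duality correspond to the cusps of~$D$; these transitions simply fill in the cusp points of $D \cup (-D)$ omitted in type~(i). Type~(iii) never arises, because by Lemma~\ref{lem:smoothderivative} the notrix, being the derivative curve of the tantrix, is smoothly immersed and hence has no cusps. The great-circle components that appeared in Theorem~\ref{thm:bridgeinflection} therefore drop out, and the tantrix-bridge graph is exactly $D \cup (-D)$.

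For the tantrix-inflection graph I would argue symmetrically, intersecting great circles with~$D$. The type-(i) tangencies contribute the centers of the great circles tangent to~$D$, namely the dual curve~$D^\vee$ and its antipode; by Lemma~\ref{lem:seconddual} we have $D^\vee = N^{\vee\vee} = -N$, so these components are~$N$ and~$-N$. Type~(ii) is vacuous, since the Darboux indicatrix has no spherical inflection points (as recorded in Section~\ref{sec:smooth_knots}, this follows from the absence of cusps on the notrix). The type-(iii) contributions come from the cusps of~$D$: exactly as in Theorem~\ref{thm:bridgeinflection}, each cusp of~$D$ contributes the great circle centered at that cusp, which is the great circle tangent to~$N$ and~$-N$ at the corresponding point. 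Since the cusps of~$D$ occur precisely where the geodesic curvature~$\tau/\kappa$ of the tantrix is stationary, these are exactly the asserted great circles.

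The step that demands the most care is the reinterpretation of the two maps as intersection counts with~$N$ and~$D$, because the tantrix itself carries cusps at the $\kappa$-inflection points of~$K$. I would need to confirm that the counting convention adopted above---namely, that a cusp of the tantrix contributes a stationary point in the projection to an axis precisely when the derivative of the tantrix there (equivalently, the corresponding point of the notrix) lies in the plane of the great circle, and contributes an inflection point or cusp in a planar projection precisely when the cusp lies on that great circle---is exactly the condition for the great circle to meet~$N$ (respectively~$D$) at the image of that cusp. The latter is transparent once one observes that at a $\kappa$-inflection point the Darboux vector reduces to~$\pm\mathbf{t}$, so the point of~$D$ there coincides with the cusp of the tantrix itself. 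Once this bookkeeping at the cusps of the tantrix is verified, the three-case analysis transfers directly from Theorem~\ref{thm:bridgeinflection} and the two descriptions follow.
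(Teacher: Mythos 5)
Your proposal is correct and takes essentially the same route as the paper: the paper's proof consists precisely of the observation that the three-case argument of Theorem~\ref{thm:bridgeinflection} applies verbatim to the dual pair (notrix, Darboux indicatrix), using that the notrix has no cusps and that the cusps of the Darboux indicatrix occur exactly where the geodesic curvature $\tau/\kappa$ of the tantrix is stationary. Your additional verification of the counting convention at cusps of the tantrix matches the discussion the paper gives just before the theorem statement.
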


\section{Duality for Spherical Polygons and Indicatrices of Stick Knots} \label{sec:preliminaries_polygonal}

	In this section, we review McRae's~\cite{Mc01} definition of the dual curve of a spherical polygon~$P$.  We also review Banchoff's~\cite{Ba82} construction of the tantrix, notrix, and binotrix of an oriented space polygon~$X$ in~$\mathbb{R}^3$ and propose a definition for the Darboux indicatrix of the oriented space polygon~$X$.  This section provides the necessary background for Sec.~\ref{sec:polygonal_knots}.

	The fundamental idea behind duality for spherical polygons is that the dual of a point on the unit sphere~$S^2$ is the co-oriented great circle at a distance of~$\pi/2$ away from that point, with the co-orienting normal pointing away from the point, while the dual of a co-oriented great circle is the point~$\pi/2$ away in the direction of the co-orienting normal.  Thus, composing the dual with itself gives the antipodal map on~$S^2$.

	Let $P$ be a spherical polygon determined by a cycle of $n$~vertices $(v_0, v_1, \ldots, v_{n-1})$, and co-oriented geodesic segments $(l_0, l_1, \ldots, l_{n-1})$, where $l_j$ is the shorter of the great circle arcs joining $v_j$ to $v_{j+1}$ (we consider vertex indices modulo the number of vertices, so that, in particular, $v_n = v_0$).
	
\begin{definition}[McRae~\cite{Mc01}]
	The \emph{dual curve~$P^\vee$ of the spherical polygon~$P$} is defined to be the co-oriented polygon determined by the cycle of vertices $(V_0, V_1, \ldots, V_{n-1})$ and sides $(L_0, L_1, \ldots, L_{n-1})$, where $V_j = l_j^\vee$ and $L_j$ is the geodesic segment joining $V_{j-1}$ to $V_j$ whose length is equal to the exterior angle at $v_j$ and with co-orientation induced from $v_j^\vee$.
\end{definition}

	Observe that the relation $P^{\vee\vee} = -P$ continues to hold in this setting.

	For a co-oriented spherical polygon~$P$, we can define a second spherical polygon in terms of~$P$ and its dual~$P^\vee$, as follows.

\begin{definition}[McRae~\cite{Mc01}]
	Let~$P$ be a co-oriented spherical polygon.  We define the \emph{direct sum} of~$P$ and its dual, denoted $P \oplus P^\vee$, to be the co-oriented spherical polygon constructed in the following manner: we regard $v_0$ as the north pole, so that the co-oriented segment $L_0$ lies on its equator.  This segment is rotated counterclockwise around~$v_0$ through an angle of~$\pi/2$.  Next, we regard the endpoint $V_0$ of~$L_0$ as the south pole and rotate the segment $l_0$ counterclockwise through an angle of~$\pi/2$.  We repeat this process, alternating between angles of the form $v_i$ and~$V_i$, until we arrive back at the vertex~$v_0$.
\end{definition}

	McRae proves the following proposition about the direct sum $P \oplus P^\vee$.

\begin{proposition}[McRae~\cite{Mc01}]
	Consecutive sides of $P \oplus P^\vee$ meet at alternating right angles.
\end{proposition}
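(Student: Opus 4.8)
The plan is to reduce the statement to one elementary fact of spherical polarity: two great circles meet at a right angle precisely when the pole of one lies on the other, equivalently when their poles are a spherical distance $\pi/2$ apart. Granting this, the whole problem becomes the identification, for each pair of consecutive sides of $P\oplus P^\vee$, of the great circles that carry them together with the poles of those circles; the right angle then drops out of a distance computation.

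First I would record the pole relationships forced by the duality. By definition $V_j=l_j^\vee$ is the pole of the great circle containing the side $l_j$, so $V_j$ sits at distance $\pi/2$ from each endpoint $v_j,v_{j+1}$ of $l_j$. Reading this backwards, the vertex $v_j$---being the common endpoint of $l_{j-1}$ and $l_j$---lies at distance $\pi/2$ from both $V_{j-1}=l_{j-1}^\vee$ and $V_j=l_j^\vee$; since $V_{j-1}$ and $V_j$ are exactly the endpoints of the dual side $L_j$, this says that $v_j$ is the pole of the great circle containing $L_j$. Hence every side of $P$ lies on a great circle whose pole is one of the $V_j$, every side of $P^\vee$ lies on a great circle whose pole is one of the $v_j$, and consecutive members of the two families have poles exactly $\pi/2$ apart, namely $d(v_j,V_{j-1})=d(v_j,V_j)=\pi/2$.

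Next I would unwind the direct-sum construction in these terms. The construction visits the sides in the interleaved order $L_0,l_0,L_1,l_1,\dots$, placing consecutive sides in the frame in which the pivot pole---$v_j$ at the junction of $l_{j-1}$ and $L_j$, and $V_j$ at the junction of $L_j$ and $l_j$---is a pole of the sphere. In such a frame one of the two sides lies on the equator (its supporting great circle being precisely the equator of the pivot) while the other passes through the pivot pole and so lies on a meridian; the $\pi/2$ rotation prescribed by the construction is exactly the right-angle turn carrying the equatorial direction to the meridional one. Translating back, at the vertex joining $L_j$ to $l_j$ the two sides lie on the great circles containing $L_j$ and $l_j$, whose poles $v_j$ and $V_j$ satisfy $d(v_j,V_j)=\pi/2$; by the polarity criterion above these great circles are orthogonal, so the sides meet at a right angle, and likewise at the vertex joining $l_{j-1}$ to $L_j$ using $d(v_j,V_{j-1})=\pi/2$. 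The handedness of each turn is dictated by the co-orientation, which flips as the pivot alternates between a $v$-pole and a $V$-pole frame; this is the sense in which the right angles \emph{alternate}.

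The main obstacle is not the angle itself---which collapses to the two pole distances above---but the bookkeeping needed to certify that the construction genuinely seats each consecutive pair of sides on the great circles containing $L_j$ and $l_j$ (pivoting at their common intersection point), rather than on some extraneously rotated copies. Here I would use that every step of the direct sum is a rigid rotation of $S^2$, so that incidences and the pole correspondence are transported unchanged; carrying the alternating north/south pole assignments and the induced co-orientations through the recursion then propagates the orthogonality to every vertex, while the relation $P^{\vee\vee}=-P$ ensures that after $2n$ sides the construction closes up on itself.
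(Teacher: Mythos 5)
Your proposal cannot be checked against a proof in the paper, because the paper gives none: the proposition is imported from McRae~\cite{Mc01} as background for defining the derivative polygon $P' = P \oplus P^\vee$, and is never proved there. Judged on its own merits, your argument is correct and is essentially the standard one. The polarity bookkeeping you set up is exactly what the paper's own constructions realize concretely in Sec.~\ref{sec:preliminaries_polygonal}: for $N = T \oplus T^\vee$ the vertices are $N_{2i} = T_i \times B_i$ and $N_{2i+1} = T_{i+1} \times B_i$, so the side $N_{2i-1}N_{2i}$ lies on the great circle polar to the vertex $T_i$ of $P$, the side $N_{2i}N_{2i+1}$ lies on the great circle polar to the vertex $B_i$ of $P^\vee$, and consecutive sides meet at a point where two circles with mutually orthogonal poles cross --- precisely your distance-$\pi/2$ criterion; the same cross-product identities reappear in the paper's proof of Proposition~\ref{prop:notrix_darboux_dual}. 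Your key bookkeeping observation --- that each side of the direct sum lies on the \emph{same} great circle as the side of $P$ or $P^\vee$ it copies, because each rotation is performed about the pole of the circle carrying that side --- is also right (cross product with a unit vector is exactly rotation by $\pi/2$ about it, so $T_i \times B_{i-1}$ and $T_i \times B_i$ are the endpoints of $L_i$ slid along its own supporting circle). The one thin spot is the alternation claim: you attribute it to the co-orientation flipping between $v$-pole and $V$-pole frames, which is the correct mechanism (a counterclockwise rotation about a north-pole pivot and a counterclockwise rotation about a south-pole pivot have opposite handedness in a fixed orientation of $S^2$, which is what produces the staircase pattern), but a complete write-up should verify the sign once, e.g.\ by computing the turn direction at $N_{2i}$ and at $N_{2i+1}$ and checking that they are opposite.
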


	More will be said about the direct sum $P \oplus P^\vee$ later in this section.

	Next, we review Banchoff's~\cite{Ba82} construction of the tantrix, notrix, and binotrix of an oriented space polygon~$X$ in~$\mathbb{R}^3$ and define the Darboux indicatrix of~$X$.  We consider~$X$ to be determined by a cycle of vertices $(X_0, X_1, \ldots, X_{n-1})$, where vertex indices are taken modulo~$n$, that is, $X$ is an oriented closed curve~$X(t)$ defined on some closed interval $[a, b]$, $a = t_0 < t_1 < \cdots < t_{n-1} < t_n = b$, with $X_i = X(t_i)$ and $X$ linear in each subintervcal $[t_i, t_{i+1}]$.  We say that $X$ is in \emph{general position} if no four consecutive vertices of~$X$ lie in a plane.  In what follows, we consider only space polygons in general position.  Moreover, we shall assume that no two (undirected) edges of~$X$ and no two osculating planes of~$X$ are parallel.

	Milnor~\cite{Mi50} defined the curvature of the space polygon~$X$ at the vertex~$X_i$ to be the angle~$\theta_i$, $0 < \theta_i < \pi$, between the vectors $X_i - X_{i-1}$ and $X_{i+1} - X_i$, and the total curvature of the polygon~$X$ to be the angle sum $\sum_{i=0}^{n-1} \theta_i$.  In particular, note that the curvature of a space polygon is positive at every vertex.  We further define the torsion~$\tau_i$ of the space polygon~$X$ at the edge $X_{i-1}X_i$ to be the directed angle~$\phi_i$, $-\pi < \phi_i < \pi$, between the projections of the directed edges $X_{i-2}X_{i-1}$ and $X_iX_{i+1}$ when we project down the directed edge $X_{i-1}X_i$, and the total absolute torsion of~$X$ to be the sum $\sum_{i=1}^{n-1} \abs{\phi_i}$.

	The tantrix, notrix, binotrix, and Darboux indicatrix of $X$ are defined to be co-oriented polygons on the unit sphere with vertices as follows.  The vertices $T_i$ and $B_i$ of the tantrix and binotrix respectively are defined by
	\[T_i = \frac{X_i - X_{i-1}}{\norm{X_i - X_{i-1}}}, \quad i = 1, \ldots, n \qquad \textrm{and} \qquad T_0 = T_n,\]
	\[B_i = \frac{T_i \times T_{i+1}}{\norm{T_i \times T_{i+1}}}, \quad i = 0, \ldots, n-1 \qquad \textrm{and} \qquad B_0 = B_n,\]
so that both of these indicatrices have the same number of vertices as $X$.  (It will often expedite calculations to use an unnormalized version of the vertices of the binotrix, which we denote by $\widetilde{B}_i = T_i \times T_{i+1}$.)  This definition can be interpreted as follows.  Each directed edge of~$X$ gives rise to a vertex of the tantrix, and each vertex of~$X$, together with the preceding and following directed edges, gives rise to a vertex of the binotrix.  At the vertex~$X_i$, the tangent line sweeps counterclockwise through an angle of~$\theta_i$ in the oriented osculating plane, which corresponds to a great circle arc of length~$\theta_i$ connecting two adjacent vertices of the tantrix, and along the edge $X_{i-1}X_i$, the binormal vector sweeps through an angle of~$\abs{\phi_i}$, which corresponds to a great circle arc of length~$\abs{\phi_i}$ connecting two adjacent vertices of the binotrix.  Hence these definitions preserve the property that the total curvature of the space polygon~$X$ is equal to the length of its tantrix and its total absolute torsion is equal to the length of the binotrix.  Note also that the relation $B = T^\vee$ continues to hold under these definitions (recall that we defined the tantrix to be co-oriented such that the co-orienting normal is oriented towards the left when viewed from the exterior of the sphere).

	Observe that the segments of the direct sum $P \oplus P^\vee$ obtained by rotating segments~$l_i$ of~$P$ behave like derivative curves of the arcs of~$P$ in the sense of Definition~\ref{def:derivative}, while those obtained by rotating segments~$L_i$ of~$P^\vee$ behave like tantrix arcs corresponding to the vertices of a stick knot.  Hence it makes sense to think of $P \oplus P^\vee$ as the derivative curve of the spherical polygon~$P$, and we write $P' = P \oplus P^\vee$.

	We say a few words about how the torsion of a space polygon can be interpreted in terms of the tantrix and the binotrix.  Since the torsion~$\tau_i$ is defined along the edge $X_{i-1}X_i$ of the space polygon~$X$, we can also think of it as a property of the vertex $T_i$ of the tantrix, or of the edge $B_{i-1}B_i$ of the binotrix.  If we orient the unit sphere such that the directed tantrix arc $T_{i-1}T_i$ lies on the equator and turns counterclockwise when viewed from the north pole, it is easy to see that $\tau_i$~is positive if the tantrix arc $T_iT_{i+1}$ lies above the equator, and negative if $T_iT_{i+1}$ lies below the equator.  Moreover, by regarding the torsion~$\tau_i$ as a property of the edge $B_{i-1}B_i$ of the binotrix, we can speak about a change of sign in torsion at a vertex of the binotrix.  Further relations between the sign of the torsion, the tantrix, and the binotrix will be elucidated later in this section.

	The notrix~$N$ of~$X$ should take into account that associated to each edge of~$X$ there is one tangent direction but two binormal directions, and associated to each vertex of~$X$ there is one binormal direction but two tangent directions.  Consequently, we define the vertices~$N_i$ of~$N$ by
	\[N_{2i} = T_i \times B_i, \quad N_{2i+1} = T_{i+1} \times B_i, \quad i = 0, \ldots, n-1, \qquad \textrm{and} \qquad N_0 = N_{2n}.\]
	Note that the notrix has twice as many vertices as~$X$.  It is easy to see from the definition of the notrix that $N = T \oplus B = T \oplus T^\vee$, thus yielding an analogue of the result $N = T'$ for space polygons.  As in Sec.~\ref{sec:smooth_knots}, we co-orient the notrix such that the co-orienting normal is oriented towards the left when viewed from the exterior of the sphere.

	Finally, the Darboux indicatrix~$D$ of~$X$ should take into account that the tangent direction is constant along each edge of~$X$ and the binormal direction is constant at each vertex of~$X$.  To this end, we define the vertices~$D_i$ of~$D$ by
	\[D_{2i-1} = \begin{cases} T_i & \mbox{if } \tau_i > 0 \\ -T_i & \mbox{if } \tau_i < 0 \end{cases}, \quad D_{2i} = B_i \quad \mbox{for } i = 1, \ldots, n, \qquad \textrm{and} \qquad D_0 = D_{2n}.\]
Like the notrix, the Darboux indicatrix also has twice as many vertices as~$X$.  It is easy to see that under this definition, the vertices of the Darboux indicatrix are the axes of rotation of the Frenet frame.  The sign of the tangent vector at the vertices of the Darboux indicatrix is chosen so that its direction, together with the direction of rotation of the Frenet frame, form a right-handed screw, as we shall show in the course of proving Theorem~\ref{thm:inflection_polygonal}.  

\section{Duality Relationships for Indicatrices of Stick Knots} \label{sec:polygonal_knots}

	In this section, we discuss stick knot analogues of the maps defined by the various indicatrices and show that the graphs of these maps satisfy similar properties as in Sec.~\ref{sec:smooth_knots}.

	We begin with a discussion of how intersections of the various indicatrices of a stick knot with a great circle can be interpreted in terms of properties of the knot conformation.  Throughout, we shall use the convention that an intersection of an indicatrix with a great circle over an interval is counted as a single intersection.

	First, the tantrix arc $T_iT_{i+1}$ of a stick knot intersects the equatorial $xy$-plane, with neither of its vertices lying on the equator, if and only if the vertex~$X_i$ of the knot is a local extremum along the $z$-axis and neither of the edges $X_{i-1}X_i$ and $X_iX_{i+1}$ have stationary height along the $z$-axis, since a vertex of the tantrix above the equator corresponds to an edge of the knot directed upwards, while a vertex of the tantrix below the equator corresponds to an edge of the knot directed downwards.  Next, a vertex~$T_i$ of the tantrix lies on the equator if and only if the edge $X_{i-1}X_i$ of the knot has stationary height along the $z$-axis.  It follows that the number of intersections of the tantrix of a stick knot with a great circle counts the number of stationary points in the projection onto the axis perpendicular to the plane of the great circle, where a point that is stationary over an interval counts as a single stationary point.  This allows us to define a direct analogue of the bridge map of a smooth knot conformation, as follows.
	
\begin{definition}
	The \emph{bridge map} of a stick conformation~$K$ is a map defined on the unit sphere~$S^2$ by
	\[\mathfrak{b}_\mathbf{v}(K) = \#\{\mbox{stationary points in the projection of $K$ onto the axis defined by $\mathbf{v}$}\}\]
for each $\mathbf{v} \in S^2$, where we use the convention that a point that is stationary over an interval counts as a single stationary point.
	\label{def:stick_bridge}
\end{definition}

	Our argument above immediately yields the following lemma.

\begin{lemma}
	\label{lem:stick_tantrix}
	The bridge map is related to intersections of the tantrix with great circles by the following formula:
	\[\mathfrak{b}_\mathbf{v}(K) = \#\{\emph{intersections of the tantrix of $K$ with the great circle orthogonal to $\mathbf{v}$}\}.\]
\end{lemma}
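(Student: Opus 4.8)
The plan is to convert the edge--vertex dictionary established in the paragraph preceding the lemma into an exact count of crossings, taking care of the reduction to an arbitrary axis and of the degenerate configurations in which a tantrix vertex lands on the great circle.

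First I would reduce to the case $\mathbf{v} = (0,0,1)$, for which the equality was essentially checked in the discussion above for the equator. Given an arbitrary $\mathbf{v} \in S^2$, choose a rotation $R \in SO(3)$ with $R\mathbf{v} = (0,0,1)$. Applying $R$ to every vertex of $K$ yields a stick conformation $RK$ whose edges are $R(X_i - X_{i-1})$; since $R$ is orthogonal it preserves norms, so the tantrix of $RK$ is the image under $R$ of the tantrix of $K$, and $R$ carries the great circle orthogonal to $\mathbf{v}$ to the equator. Because $R$ is an isometry of $\mathbb{R}^3$, it preserves both the number of stationary points of the projection and the number of tantrix--great circle intersections, so it suffices to treat the vertical axis and the equator. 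There, for each $i$ the tantrix vertex $T_i = (X_i - X_{i-1})/\norm{X_i - X_{i-1}}$ lies in the open upper hemisphere, on the equator, or in the open lower hemisphere according to whether the $z$-component of the edge $X_{i-1}X_i$ is positive, zero, or negative; writing $\epsilon_i \in \{+,0,-\}$ for this sign, the height of $K$ along the $z$-axis is stationary exactly (i) at a vertex $X_i$ with $\epsilon_i\epsilon_{i+1} < 0$, a local extremum flanked by two non-flat edges, and (ii) on each maximal run of flat edges, i.e.\ each maximal block of consecutive indices with $\epsilon = 0$. I would then match these one-for-one with intersections of the tantrix with the equator: in case (i) the arc $T_iT_{i+1}$ crosses the equator transversally, while a run of flat edges produces consecutive tantrix vertices on the equator joined by arcs lying along the equator, which by convention is a single intersection over an interval.

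The main obstacle is precisely this bookkeeping at the degenerate configurations, where the counting conventions of Definition~\ref{def:stick_bridge} and of the present section do the essential work, and where I must verify an exact bijection rather than merely an equality up to parity. A flat run flanked by edges of opposite sign is a local extremum, and the tantrix crosses the equator over that interval; a flat run flanked by edges of the same sign is a monotone ``step,'' and the tantrix merely touches the equator over that interval without crossing. In both cases the flat interval is stationary and is counted once, and in both cases the tantrix meets the equator over an interval and is counted once, so the two tallies agree; a vertex where the height passes through monotonically ($\epsilon_i\epsilon_{i+1} > 0$ with both signs nonzero) is non-stationary and the arc $T_iT_{i+1}$ stays in one open hemisphere, contributing nothing to either side. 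Once these cases are checked, the non-degenerate count follows immediately from the sign analysis, completing the proof.
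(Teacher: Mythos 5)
Your proposal is correct and takes essentially the same approach as the paper, whose own ``proof'' is exactly the dictionary in the paragraph preceding the lemma: transversal crossings of tantrix arcs with the great circle correspond to local-extremum vertices of the knot, tantrix vertices on the great circle correspond to flat edges, and the two interval-counting conventions make the tallies agree. Your write-up simply makes explicit the rotation to the equator and the bookkeeping for maximal flat runs (extremum versus monotone step), both of which the paper leaves implicit.
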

	
	We require the following definition in studying intersections of the binotrix with great circles.

\begin{definition}
	Given a projection of a stick conformation, we define an \emph{inflection stick} to be (i) an edge of the stick conformation such that the two edges adjacent to it are projected to opposite sides of the line obtained by extending the projection of the edge infinitely (Fig.~\ref{fig:inflection_stick1}), (ii) a pair of adjacent edges projected to collinear sticks (Fig~\ref{fig:inflection_stick2}), or (iii) an edge projected down to a point (Fig~\ref{fig:inflection_stick3}).	
	\label{def:inflectionstick}
\end{definition}

\begin{figure}[ht]
	\centering
	\subfloat[Inflection stick]{\label{fig:inflection_stick1}\includegraphics[width=0.2\textwidth]{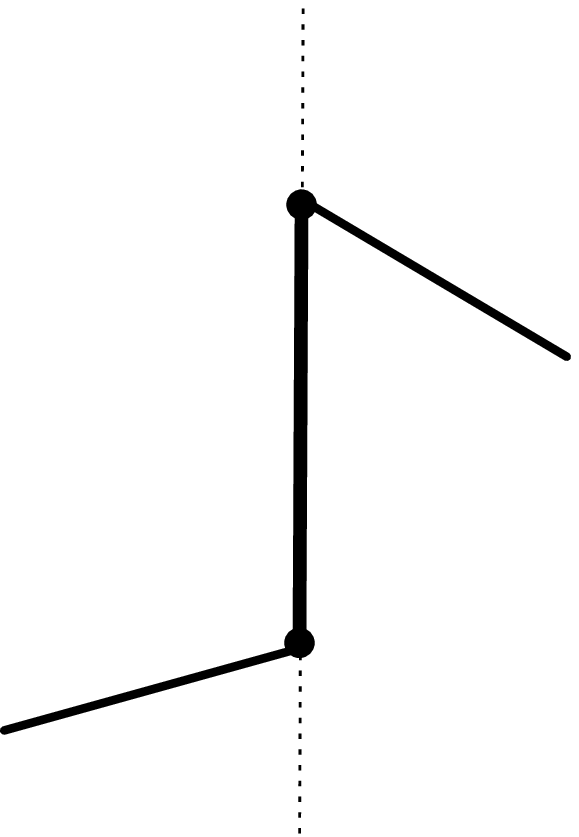}}\quad
	\subfloat[Inflection stick]{\label{fig:inflection_stick2}\includegraphics[width=0.2\textwidth]{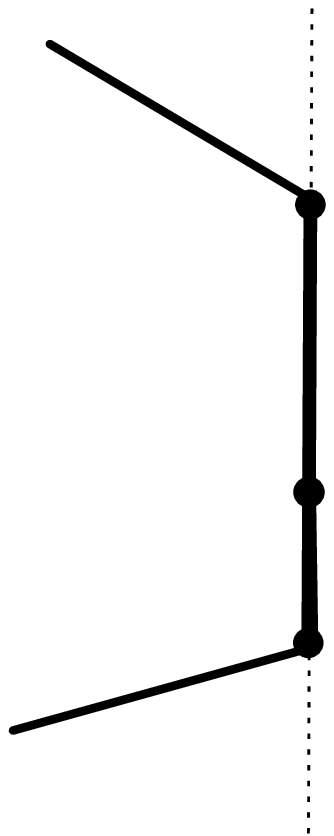}}\quad
	\subfloat[Inflection stick]{\label{fig:inflection_stick3}\includegraphics[width=0.2\textwidth]{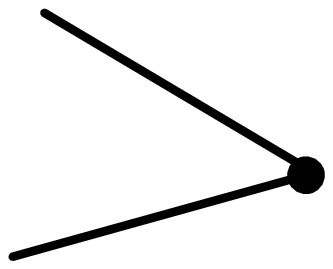}}\quad
	\subfloat[Not an inflection stick.]{\label{fig:not_inflection_stick}\includegraphics[width=0.2\textwidth]{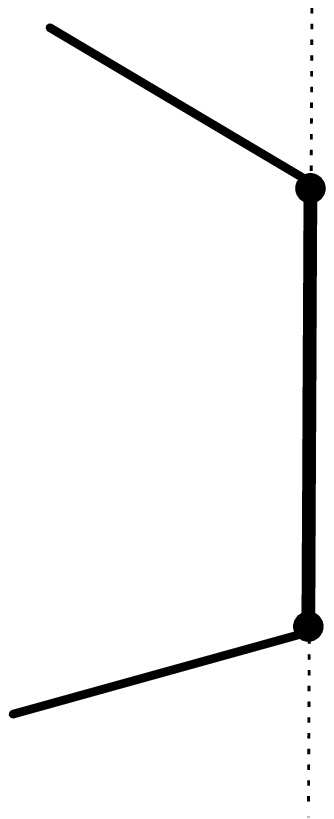}}\quad
	\caption{Examples and a non-example of an inflection stick.}
	\label{fig:inflection_stick_ex}
\end{figure}

	Inflection sticks of the first two types may be viewed as the stick knot equivalent of inflection points, while inflection sticks of the third type may be viewed as the stick knot equivalent of cusps.

\begin{definition}
	The \emph{inflection map} of a stick conformation~$K$ is the map defined on the unit sphere~$S^2$ by
	\[\mathfrak{i}_\mathbf{v}(K) = \#\{\mbox{inflection sticks in the projection of $K$ onto the plane orthogonal to $\mathbf{v}$}\}\]
for each $\mathbf{v} \in S^2$.
	\label{def:stick_inflection}
\end{definition}

\begin{lemma}
	\label{lem:stick_binotrix}
	The inflection map is related to intersections of the binotrix with great circles by the following formula:
	\[\mathfrak{i}_\mathbf{v}(K) = \#\{\emph{intersections of the binotrix of $K$ with the great circle orthogonal to $\mathbf{v}$}\}.\]
\end{lemma}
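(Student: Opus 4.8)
**

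The plan is to show that $\mathfrak{i}_\mathbf{v}(K)$, the number of inflection sticks in the projection onto the plane orthogonal to $\mathbf{v}$, equals the number of intersections of the binotrix with the great circle $G_\mathbf{v}$ orthogonal to $\mathbf{v}$. Since the binotrix $B$ is a spherical polygon with vertices $B_i = \widetilde{B}_i/\|\widetilde{B}_i\|$ where $\widetilde{B}_i = T_i \times T_{i+1}$, and each vertex $B_i$ is precisely the unit binormal associated to the vertex $X_i$ of the knot (the normal to the osculating plane spanned by the edges $X_{i-1}X_i$ and $X_iX_{i+1}$), the natural strategy is to establish a bijection between the three types of inflection sticks in Definition~\ref{def:inflectionstick} and the two ways a great circle can meet a spherical polygon—crossing an arc transversally, or passing through a vertex.

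First I would handle the case where $G_\mathbf{v}$ crosses an edge $B_{i-1}B_i$ of the binotrix transversally (neither endpoint on $G_\mathbf{v}$). The arc $B_{i-1}B_i$ corresponds to the edge $X_{i-1}X_i$ of the knot, along which the binormal sweeps through angle $\abs{\phi_i}$; the great circle $G_\mathbf{v}$ separates $B_{i-1}$ from $B_i$ exactly when $\mathbf{v}\cdot \widetilde{B}_{i-1}$ and $\mathbf{v}\cdot\widetilde{B}_i$ have opposite signs. I would argue this is equivalent to the condition that the projections of the adjacent edges $X_{i-2}X_{i-1}$ and $X_iX_{i+1}$ lie on opposite sides of the projected edge $X_{i-1}X_i$, which is exactly an inflection stick of type (i): the sign of $\mathbf{v}\cdot(T_{j}\times T_{j+1})$ records on which side of the projected edge the relevant neighboring vertex falls. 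Second, the case where a vertex $B_i$ lies on $G_\mathbf{v}$ means $\mathbf{v}$ is orthogonal to the binormal at $X_i$, i.e. $\mathbf{v}$ lies in the osculating plane at $X_i$, so the two edges $X_{i-1}X_i$ and $X_iX_{i+1}$ project to collinear sticks—this is precisely an inflection stick of type (ii). Finally, an edge projecting to a point (type (iii)) occurs when $\mathbf{v}$ is parallel to that edge, i.e. $\mathbf{v}=\pm T_i$; under the convention that an intersection over an interval counts once, I would check that $\mathbf{v}=\pm T_i$ forces $G_\mathbf{v}$ to contain the entire arc or the relevant vertex of the binotrix, which the counting convention collapses to a single intersection, matching the single inflection stick.

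The main obstacle will be the bookkeeping at degenerate intersections and the counting convention, rather than any single hard computation. The convention that ``an intersection over an interval counts once'' must be shown to align exactly with counting each inflection stick once: I must rule out double-counting when $G_\mathbf{v}$ passes through a vertex $B_i$ while also being tangent to, or containing part of, an adjacent arc, and I must confirm that the general-position hypotheses (no four consecutive vertices coplanar, no two osculating planes parallel, no two edges parallel) guarantee that the three inflection-stick types are mutually exclusive and exhaust all intersection scenarios. In particular, the assumption that no two osculating planes are parallel ensures distinct binotrix vertices, and that no two edges are parallel keeps the type-(iii) events isolated. I would organize the argument as a sign-of-$\mathbf{v}\cdot\widetilde{B}_i$ analysis along the cyclic sequence of binotrix vertices, verifying that each sign change or zero corresponds to exactly one inflection stick of the appropriate type, which yields the claimed equality of counts.
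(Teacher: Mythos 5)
Your proposal is correct and takes essentially the same route as the paper: the paper's proof is exactly this trichotomy---transversal arc crossing of $B_{i-1}B_i$ $\leftrightarrow$ type (i), vertex $B_i$ on the great circle $\leftrightarrow$ type (ii), and the whole arc lying on the great circle (when $\mathbf{v}=\pm T_i$) $\leftrightarrow$ type (iii)---carried out by an explicit coordinate computation of $\widetilde{B}_{i-1}$ and $\widetilde{B}_i$ whose $z$-coordinates are precisely your dot products $\mathbf{v}\cdot\widetilde{B}_{i-1}$, $\mathbf{v}\cdot\widetilde{B}_i$. The degeneracy bookkeeping you flag is handled in the paper by the same general-position hypotheses you cite, encoded as sign and vanishing conditions on those coordinates.
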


\begin{proof}
	Orient the sphere such that the plane orthogonal to $\mathbf{v}$ is the equatorial $xy$-plane and the projection of the edge $X_{i-1}X_i$ of the knot conformation~$K$ is in the direction of the positive $y$-axis.  We shall consider the effect of the position of the projections of the edges $X_{i-2}X_{i-1}$ and $X_iX_{i+1}$ on the binotrix of~$K$.

	Let the tangent vectors $T_{i-1}$, $T_i$ and $T_{i+1}$ be given by
	\[T_{i-1} = \left(\begin{array}{c} x_- \\ y_- \\ z_-\end{array}\right), \quad
	T_i = \left(\begin{array}{c} 0 \\ c \\ z_0 \end{array}\right), \quad
	T_{i+1} = \left(\begin{array}{c} x_+ \\ y_+ \\ z_+\end{array}\right).\]
where $c \geq 0$.
Then
	\[\widetilde{B}_{i-1} = T_{i-1} \times T_i = \left(\begin{array}{c} z_0y_- - cz_- \\ -z_0x_- \\ cx_-\end{array}\right), \quad
	\widetilde{B}_i = T_i \times T_{i+1} = \left(\begin{array}{c} cz_+ - z_0y_+ \\ z_0x_+ \\ -cx_+ \end{array}\right).\]
The binotrix arc $B_{i-1}B_i$ intersects the equatorial plane with neither of its vertices lying on the equator if and only if $c > 0$ and $x_-$ and $x_+$ have the same sign, so that we have the configuration in Fig.~\ref{fig:inflection_stick1}.  Next, the vertex $B_i$ of the binotrix lies on the equator but both of its adjacent vertices do not if and only if $c \neq 0$, $x_- \neq 0$, $x_+ = 0$, and $y_+ \neq 0$ (the final inequality arises from the fact that otherwise, $\widetilde{B}_{i+1} = T_{i+1} \times T_{i+2}$ would also lie on the equator).  Hence the edges $X_{i-1}X_i$ and $X_iX_{i+1}$ are projected down to parallel sticks, as in Fig.~\ref{fig:inflection_stick2}. Finally, $B_{i-1}$ and $B_i$ both lie on the equator if and only if $c = 0$ (we cannot have $x_- = x_+ = 0$ because of our assumption that the knot conformation is in general position), and thus the edge $X_{i-1}X_i$ is projected to a point, as in Fig.~\ref{fig:inflection_stick3}.  This proves the lemma.
\end{proof}

\begin{definition}
	The \emph{tantrix-bridge map} of a stick conformation~$K$ is a map defined on the unit sphere~$S^2$ by
	\[\mathfrak{tb}_\mathbf{v}(K) = \#\{\mbox{stationary points in projection of tantrix of $K$ onto axis defined by $\mathbf{v}$}\}\]
for each $\mathbf{v} \in S^2$, where we use the convention that a point that is stationary over an interval counts as a single stationary point.
	\label{def:stick_tantrix-bridge}
\end{definition}

\begin{lemma}
	The tantrix-bridge map is related to intersections of the notrix with great circles by the following formula:
	\[\mathfrak{tb}_\mathbf{v}(K) = \#\{\mbox{intersections of the notrix of $K$ with the great circle orthogonal to $\mathbf{v}$}\}.\]
\end{lemma}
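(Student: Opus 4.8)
The plan is to recognize the notrix $N$ as the tangent indicatrix (equivalently, the derivative curve $T'$) of the spherical polygon $T$, which reduces the statement to the analogue of Lemma~\ref{lem:stick_tantrix} one level up, now with $T$ playing the role of the underlying polygon and $N$ the role of its tantrix. Concretely, I would first observe that each notrix edge-arc $N_{2i}N_{2i+1}$, a great circle arc with pole $B_i$, records (up to a consistent sign, which is immaterial below) the unit tangent direction $p\times B_i$ to $T$ as $p$ sweeps the tantrix edge $T_iT_{i+1}$, while each notrix vertex-arc $N_{2i+1}N_{2i+2}$, a great circle arc with pole $T_{i+1}$, records the turning of this tangent direction at the tantrix vertex $T_{i+1}$. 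Orienting the sphere so that $\mathbf{v}$ is the north pole, the great circle orthogonal to $\mathbf{v}$ becomes the equator, and the projection appearing in Definition~\ref{def:stick_tantrix-bridge} becomes the height function $z$ restricted to $T$; a point of $T$ is then stationary for $z$ exactly when the tangent direction there is horizontal, that is, when the corresponding notrix point lies on the equator.

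With this dictionary in place, I would establish a bijection between the stationary features of $z|_T$ and the equatorial intersections of $N$ by a short case analysis. An interior stationary point on a tantrix edge $T_iT_{i+1}$ corresponds to the edge-arc $N_{2i}N_{2i+1}$ crossing the equator in its interior; a height extremum at a tantrix vertex $T_{i+1}$ corresponds to $N_{2i+1}$ and $N_{2i+2}$ lying on opposite sides of the equator, i.e.\ to the vertex-arc $N_{2i+1}N_{2i+2}$ crossing the equator; and a horizontal tantrix edge (stationary over an interval) corresponds to an edge-arc lying entirely on the equator, which happens precisely when $B_i=\pm\mathbf{v}$. Under the convention of Definition~\ref{def:stick_tantrix-bridge} each of these features is counted once on both sides, giving the claimed equality.

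The main obstacle --- and the genuinely new point compared with the straight-edge computation behind Lemma~\ref{lem:stick_tantrix} --- is the first case. Because the edges of $T$ are true great circle arcs rather than line segments, the height $z$ varies sinusoidally along an edge and can attain an interior extremum, a phenomenon absent in the flat setting, where each edge is either monotone or constant in height. I would control this by using that each tantrix edge is the \emph{minor} arc joining its endpoints, of length at most $\pi$, so that $z$ restricted to it is either monotone or has a single interior critical point; this matches the fact that the great circle arc $N_{2i}N_{2i+1}$ crosses the equator at most once in its interior. It then remains to dispose of the degenerate configurations --- a notrix vertex landing exactly on the equator (the tangent being horizontal precisely at a tantrix vertex) or a tantrix vertex coinciding with $\pm\mathbf{v}$ --- which are ruled out generically by the general-position hypotheses (no two edges and no two osculating planes of $X$ parallel) together with the interval-counting convention, so that the correspondence is a genuine bijection for every $\mathbf{v}$.
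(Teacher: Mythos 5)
Your proposal is correct and takes essentially the same route as the paper's proof: the same dictionary (a point of the tantrix is stationary in height if and only if the corresponding notrix point, obtained by the $\pi/2$ shift along the same great circle, lies on the equator), the same case split between interior points of tantrix edge-arcs (matched to notrix edge-arcs with pole $B_i$) and tantrix vertices (matched to notrix vertex-arcs with pole $T_i$), with horizontal edges absorbed by the interval-counting convention. Your extra touches --- the ``tantrix of the tantrix'' framing and the minor-arc observation --- are embellishments of the same argument, and your treatment of the measure-zero degenerate directions is at the same level of informality as the paper's own proof.
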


\begin{proof}
	First, the tantrix arc $T_iT_{i+1}$ has an extrema in height along the axis defined by~$\mathbf{v}$ at a point~$p$ in its interior if and only if the notrix arc $N_{2i}N_{2i+1}$ intersects the great circle orthogonal to~$\mathbf{v}$ at the point~$p'$ on $N_{2i}N_{2i+1}$ obtained by moving~$p$ by a distance of~$\pi/2$ along the great circle that $T_iT_{i+1}$ lies on.  Next, the tantrix arc $T_{i-1}T_i$ is going up (respectively down) locally in the vicinity of $T_i$ if and only if the notrix vertex $N_{2i-1}$ lies above (respectively below) the equator, and the tantrix arc $T_iT_{i+1}$ is going down (respectively up) locally in the vicinity of~$T_i$ if and only if the notrix vertex~$N_{2i}$ lies below (respectively above) the equator.  It follows that the tantrix vertex~$T_i$ is an extrema in height if and only if $N_{2i-1}N_{2i}$ intersects the equator.  Since this accounts for all types of extrema in height and all types of intersections of the notrix with the equator, the lemma follows.	
\end{proof}

	Finally, we wish to interpret intersections of the Darboux indicatrix with a great circle in terms of projections of the tantrix to the plane defined by the great circle.  For convenience, we shall consider only projections where no vertex of the Darboux indicatrix lies on the great circle; we call such projections \emph{regular projections} and the corresponding intersections with the Darboux indicatrix \emph{regular intersections}.  The reader can easily generalize our formulae to include non-regular projections, if desired.  To this end, we consider the case where the great circle lies in the equatorial plane and study the effect of the position of vertices of the binotrix and tantrix on the projection of the the tantrix.  This is tabulated in Table~\ref{tab:stick_darboux}.  Note that the direction of rotation never changes along the projection of an arc of the tantrix because the arcs of the tantrix are great circle arcs.

\begin{table}[ht]
\begin{tabular}{|c|c|c|c|}
\hline & \textbf{Position w.r.t.} & & \\
\textbf{Vertex} & \textbf{equatorial plane} & \textbf{Torsion} $\tau_i$ & \textbf{Property of projection of tantrix} \\ \hline
$B_i$ & above & -- & tantrix arc $T_iT_{i+1}$ turns counterclockwise \\ \hline
$B_i$ & below & -- & tantrix arc $T_iT_{i+1}$ turns clockwise \\ \hline
$T_i$ & above & $> 0$ & angle to left between $T_{i-1}T_i$ and $T_iT_{i+1}$ $< \pi$ \\ \hline
$T_i$ & below & $> 0$ & angle to left between $T_{i-1}T_i$ and $T_iT_{i+1}$ $> \pi$ \\ \hline
$T_i$ & above & $< 0$ & angle to left between $T_{i-1}T_i$ and $T_iT_{i+1}$ $> \pi$ \\ \hline
$T_i$ & below & $< 0$ & angle to left between $T_{i-1}T_i$ and $T_iT_{i+1}$ $< \pi$ \\ \hline
\end{tabular}
\caption{Relation between the location of vertices of the binotrix and tantrix and the projection of the tantrix.}
\label{tab:stick_darboux}
\end{table}

	From Table~\ref{tab:stick_darboux}, we obtain the configurations shown in Table~\ref{tab:tantrix_projection} for projections of the tantrix corresponding to intersections of the Darboux indicatrix with the equator.

\begin{table}[ht]
\begin{tabular}{|c|c|}
\hline
\includegraphics[width=0.3\textwidth]{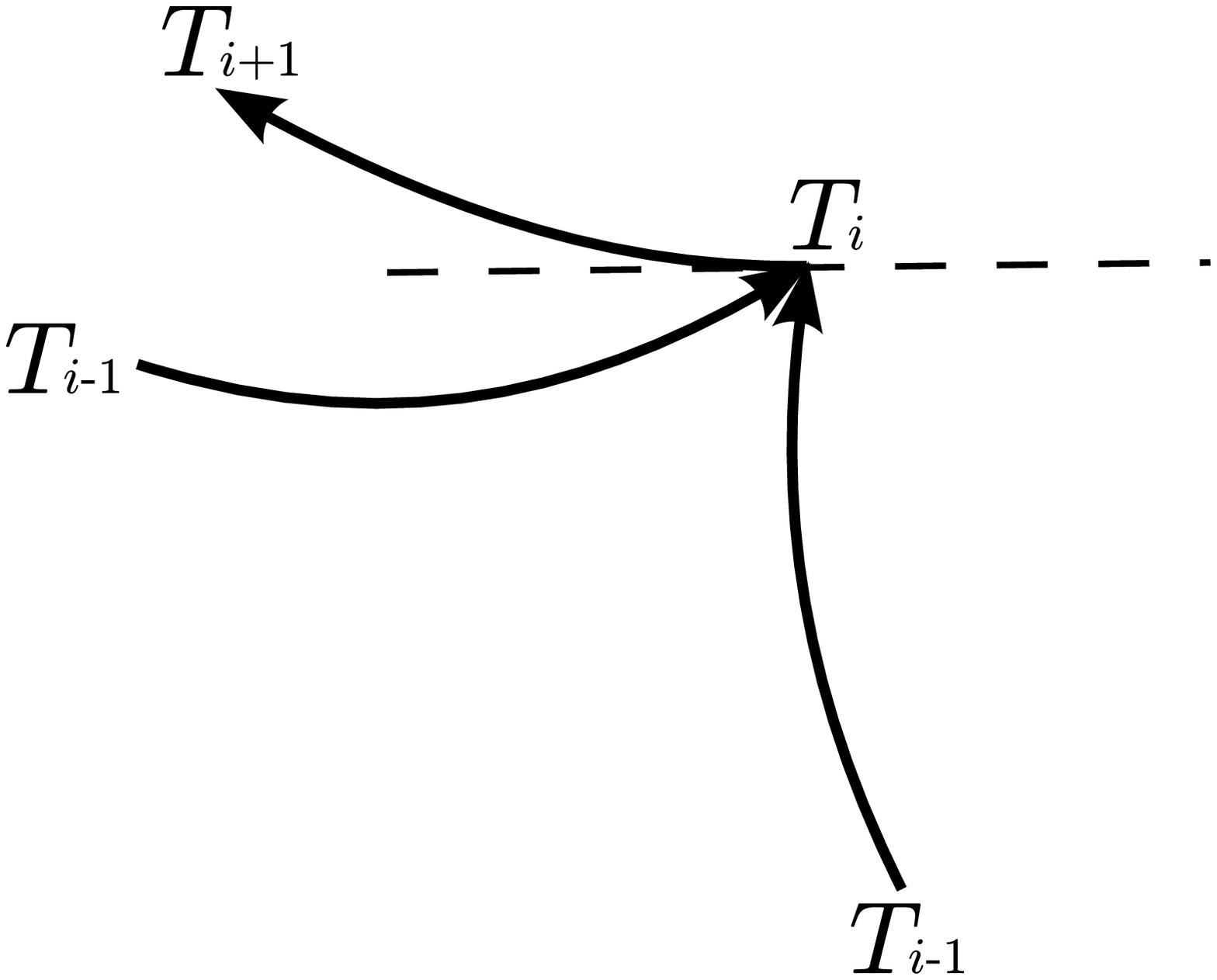} & \includegraphics[width=0.3\textwidth]{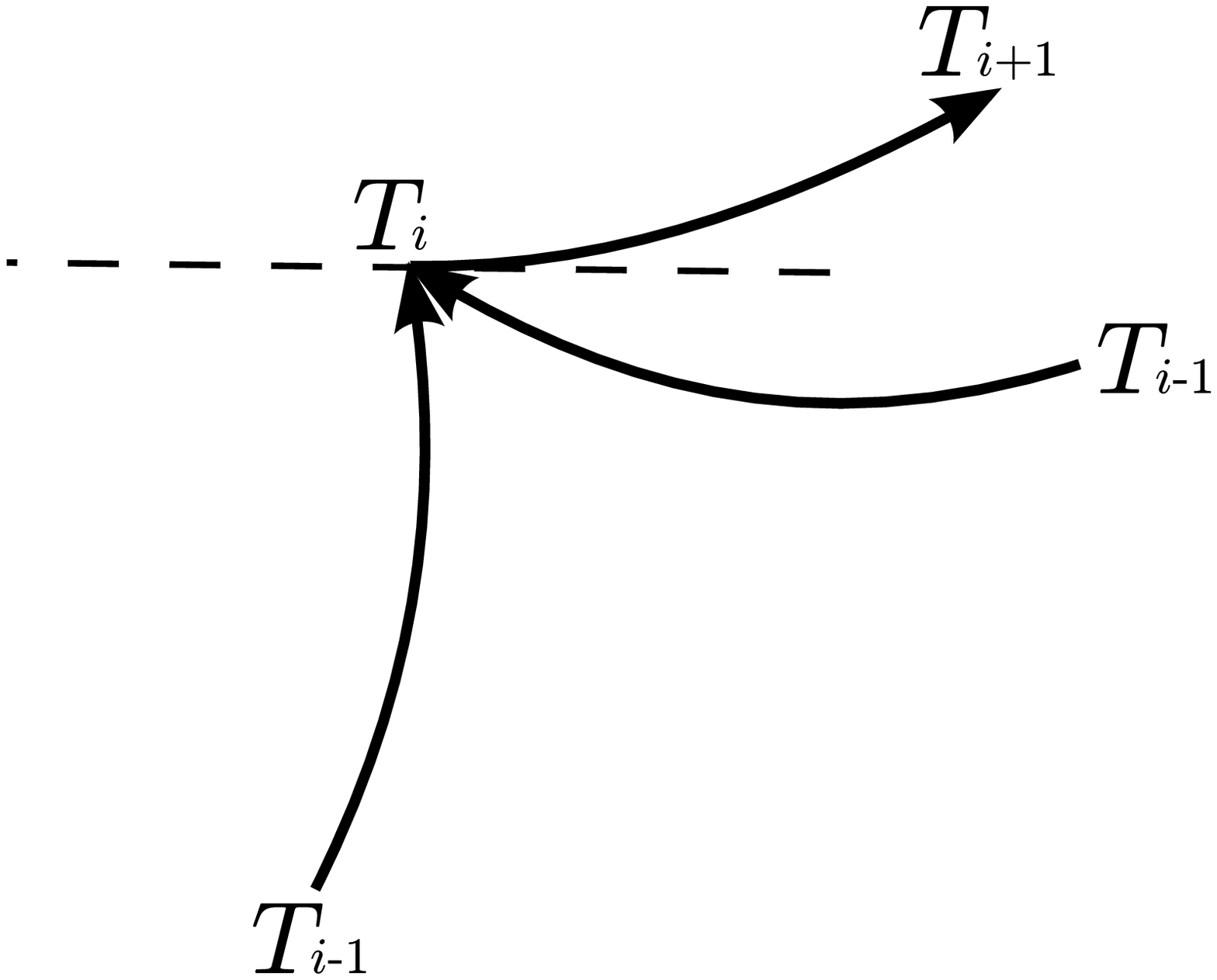}\\ 
$\tau_i > 0$, $D_{2i-1} = T_i$ above or & $\tau_i > 0$, $D_{2i-1} = T_i$ below or \\
$\tau_i < 0$, $D_{2i-1} = -T_i$ above; & $\tau_i < 0$, $D_{2i-1} = -T_i$ below; \\
$D_{2i} = B_i$ below & $D_{2i} = B_i$ above \\ \hline
\includegraphics[width=0.3\textwidth]{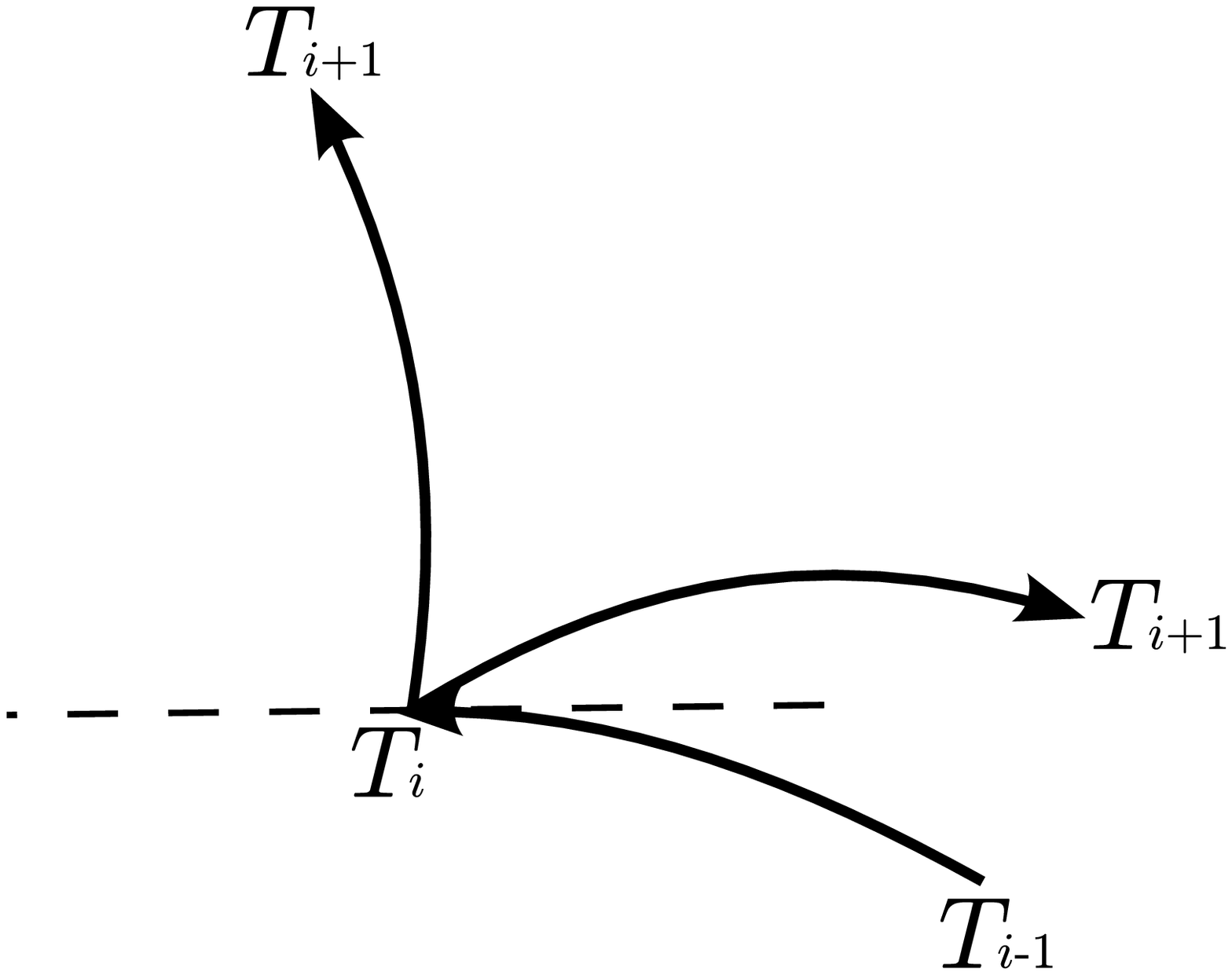} & \includegraphics[width=0.3\textwidth]{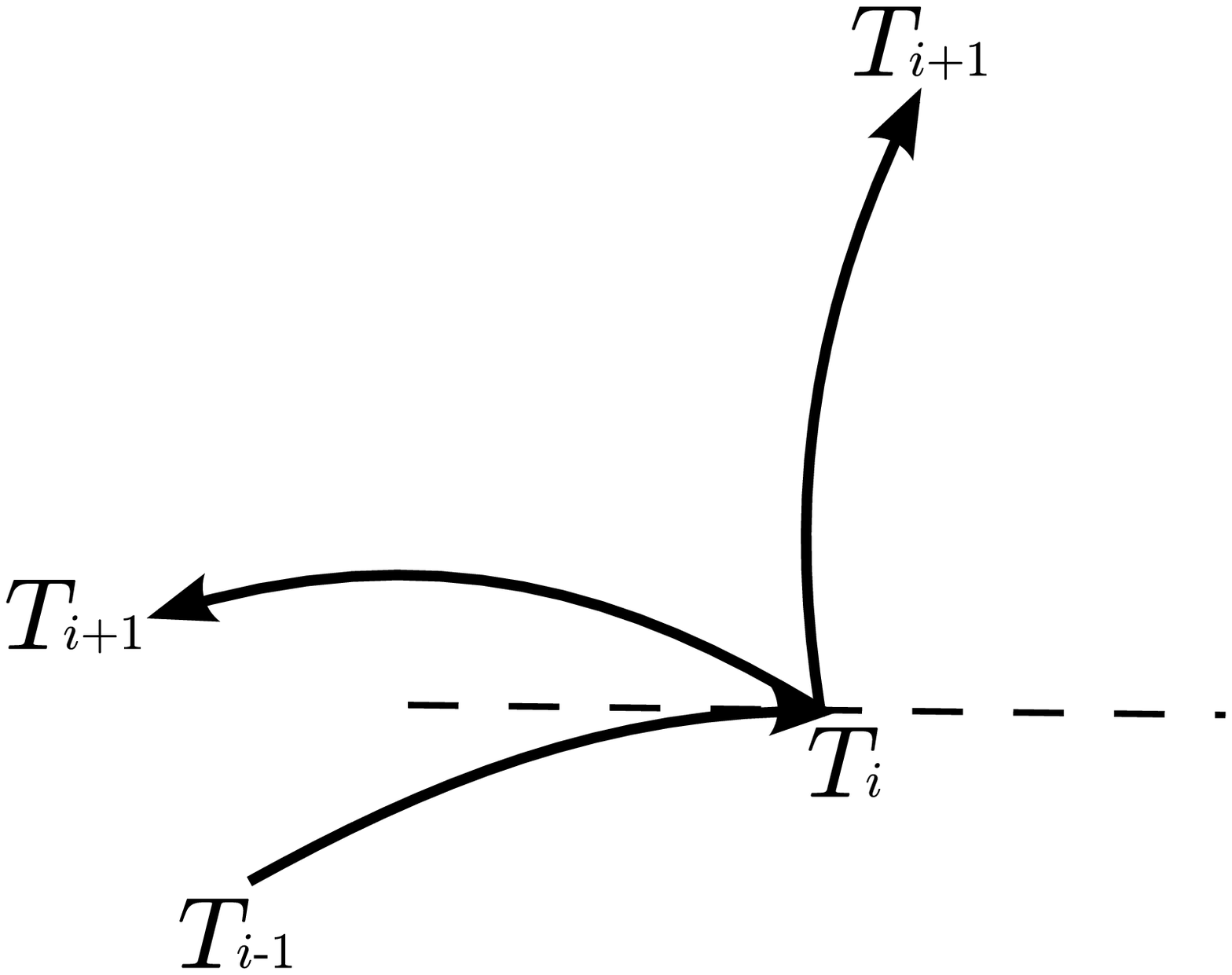}\\ 
$D_{2i} = B_i$ above; & $D_{2i} = B_i$ below; \\
$\tau_{i+1} > 0$, $D_{2i+1} = T_{i+1}$ below or & $\tau_{i+1} > 0$, $D_{2i+1} = T_{i+1}$ above or \\
$\tau_{i+1} < 0$, $D_{2i+1} = -T_{i+1}$ below & $\tau_{i+1} < 0$, $D_{2i+1} = -T_{i+1}$ above \\ \hline
\end{tabular}
\caption{Projection of tantrix when the Darboux indicatrix intersects the equator.}
\label{tab:tantrix_projection}
\end{table}

	This allows us to formulate the following definition.

\begin{definition}
	Let $d_a(\mathbf{v})$, $d_b(\mathbf{v})$ and $d_c(\mathbf{v})$ be the number of pairs of adjacent arcs of the tantrix whose projection onto the plane orthogonal to~$\mathbf{v}$ appear as in Figs.~\ref{fig:tantrix_projection_a}, \ref{fig:tantrix_projection_b}, and \ref{fig:tantrix_projection_c}  respectively.  The \emph{tantrix-inflection map} of a stick conformation $K$ is the map defined almost everywhere on the unit sphere $S^2$ by
	\[\mathfrak{ti}_\mathbf{v}(K) = d_a(\mathbf{v}) + d_b(\mathbf{v}) + 2d_c(\mathbf{v})\]
for each $\mathbf{v} \in S^2$ corresponding to a regular projection.
	\label{def:stick_tantrix_inflection}
\end{definition}

\begin{figure}[ht]
	\centering
	\subfloat[Pair of adjacent arcs contributing to $d_a(\mathbf{v})$]{\label{fig:tantrix_projection_a}\includegraphics[width=0.3\textwidth]{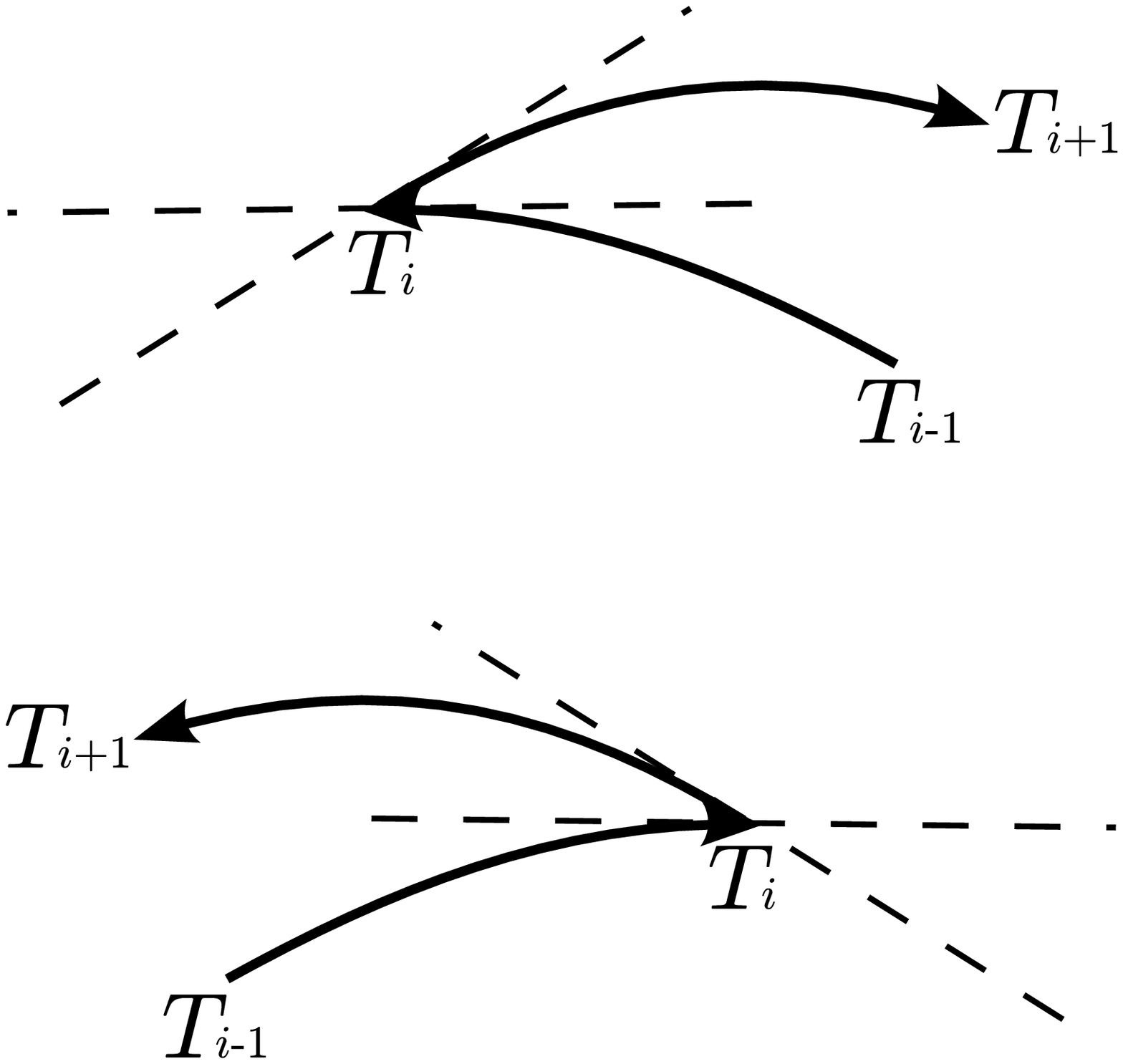}}\quad
	\subfloat[Pair of adjacent arcs contributing to $d_b(\mathbf{v})$]{\label{fig:tantrix_projection_b}\includegraphics[width=0.3\textwidth]{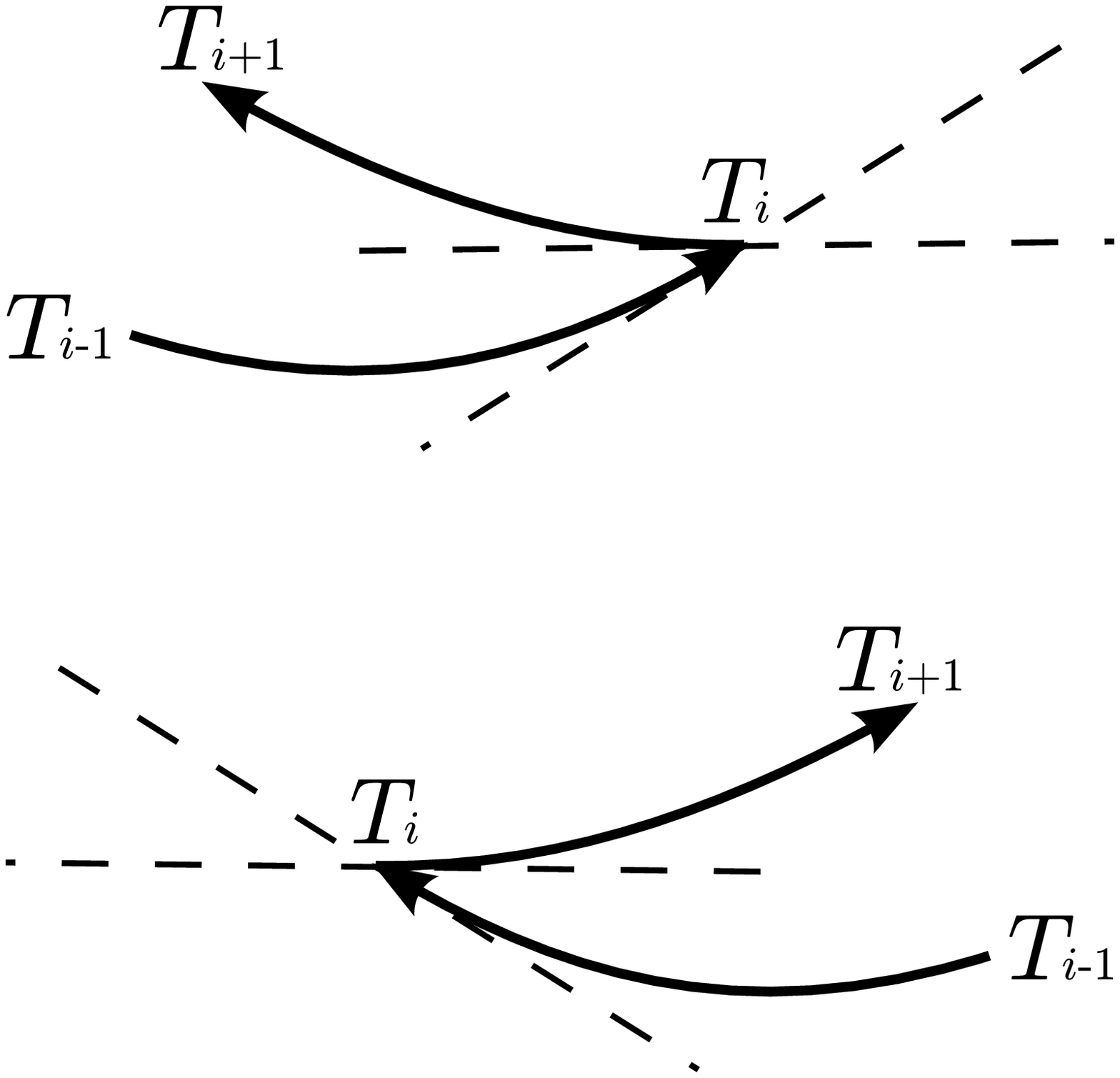}}\quad
	\subfloat[Pair of adjacent arcs contributing to $d_c(\mathbf{v})$]{\label{fig:tantrix_projection_c}\includegraphics[width=0.3\textwidth]{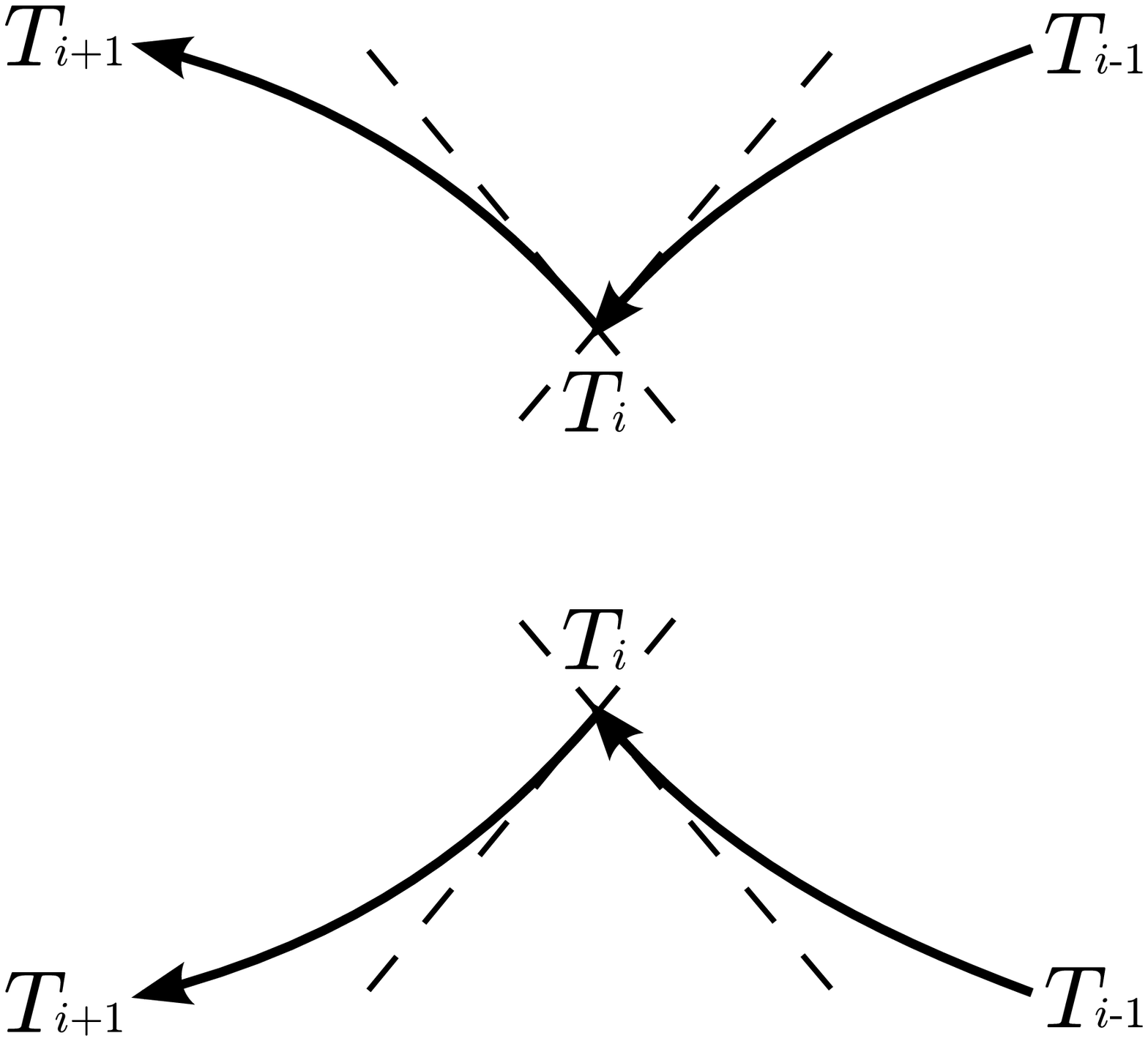}}\quad
	\caption{Types of projections of pairs of adjacent arcs that contribute to $d_a(\mathbf{v})$, $d_b(\mathbf{v})$, and $d_c(\mathbf{v})$ respectively.}
	\label{fig:tantrix_projection_types}
\end{figure}

	The reason for the coefficient~`2' in the above definition is explained in the following lemma.

\begin{lemma}
	\label{lem:stick_darboux}
	The tantrix-inflection map is related to intersections of the Darboux indicatrix with great circles by the following formula:
	\[\mathfrak{ti}_\mathbf{v}(K) = \#\{\emph{intersections of Darboux indicatrix of $K$ with the great circle orthogonal to $\mathbf{v}$}\}.\]
\end{lemma}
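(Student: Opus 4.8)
The plan is to reduce the statement to a purely combinatorial count of sign changes and then to match that count against the three configuration types in Definition~\ref{def:stick_tantrix_inflection}. First I would orient $S^2$ so that $\mathbf{v}$ is the north pole and the great circle orthogonal to $\mathbf{v}$ is the equator; since we are considering a regular projection, no vertex of the Darboux indicatrix lies on the equator. Because $D_{2i-1}=\pm T_i$ and $D_{2i}=B_i$ satisfy $T_i\perp B_i$ and $B_i\perp T_{i+1}$, every arc of $D$ has length exactly $\pi/2$; such a geodesic arc meets the equator at most once, and does so precisely when its two endpoints lie on opposite sides of the equator. Consequently the number of intersections of $D$ with the equator equals the number of sign changes in the cyclic sequence of positions (above or below the equator) of the vertices $D_0,D_1,\dots,D_{2n-1}$.

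Next I would translate these positions into features of the projected tantrix using Table~\ref{tab:stick_darboux}. The position of $D_{2i}=B_i$ records whether the projected arc $T_iT_{i+1}$ turns counterclockwise or clockwise, while the position of $D_{2i-1}=\pm T_i$ (the sign being dictated by $\tau_i$ through the right-handed-screw convention recorded in Table~\ref{tab:stick_darboux}) records whether the projected corner at $T_i$ bends to the left or to the right, i.e.\ whether the angle to the left between $T_{i-1}T_i$ and $T_iT_{i+1}$ is less than or greater than $\pi$. Reading the sequence $\dots,D_{2i-1},D_{2i},D_{2i+1},\dots$ in cyclic order therefore lists, alternately, the turning sense of the corner at $T_i$, then of the arc $T_iT_{i+1}$, then of the corner at $T_{i+1}$, and so on; a sign change between two consecutive entries is exactly a reversal of the turning of the projected tantrix, that is, an inflection-type feature of the projection.

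The bookkeeping step is to group the $2n$ arcs of $D$ by the tantrix corner they flank. The pair of adjacent tantrix arcs meeting at $T_{i+1}$ is associated with exactly the two arcs $D_{2i}D_{2i+1}$ and $D_{2i+1}D_{2i+2}$ of the Darboux indicatrix, and as $i$ ranges over all vertices these associated pairs partition the full set of $2n$ Darboux arcs. Hence the total number of crossings of $D$ with the equator is the sum, over all corners, of the number (zero, one, or two) of these two associated arcs that cross. Comparing with Table~\ref{tab:tantrix_projection}, a corner contributing a single crossing projects to one of the two shapes in Figs.~\ref{fig:tantrix_projection_a} and~\ref{fig:tantrix_projection_b}, and so is counted once in $d_a(\mathbf{v})$ or $d_b(\mathbf{v})$, whereas a corner contributing two crossings forces the two flanking arcs to turn the same way while the corner turns the opposite way, which projects to the cusp-like shape in Fig.~\ref{fig:tantrix_projection_c} and is counted once in $d_c(\mathbf{v})$; this is precisely why $d_c(\mathbf{v})$ carries the coefficient~$2$. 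Summing gives $\mathfrak{ti}_\mathbf{v}(K)=d_a(\mathbf{v})+d_b(\mathbf{v})+2d_c(\mathbf{v})$ equal to the number of intersections of $D$ with the equator, as required.

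I expect the main obstacle to be the geometric verification underlying Table~\ref{tab:tantrix_projection}: one must check that the four single-crossing cells there genuinely assemble into exactly the three projected configurations of Figs.~\ref{fig:tantrix_projection_a}--\ref{fig:tantrix_projection_c} with the stated multiplicities, and in particular that the case of two crossings at a single corner coincides with the cusp configuration and with no other. Closely tied to this is confirming that the sign assigned to $D_{2i-1}$ by the right-handed-screw convention is the one that makes the correspondence in Table~\ref{tab:stick_darboux} hold; this compatibility is the content established in the course of proving Theorem~\ref{thm:inflection_polygonal}, and I would invoke it here rather than re-derive it.
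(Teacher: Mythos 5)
Your proposal is correct and takes essentially the same route as the paper's proof: both reduce the tantrix-inflection count to crossings of the individual great-circle arcs of the Darboux indicatrix with the equator, translate vertex positions into features of the projected tantrix via Table~\ref{tab:stick_darboux}, and then match crossings against the configurations of Figs.~\ref{fig:tantrix_projection_a}--\ref{fig:tantrix_projection_c}, with the cusp-like configuration accounting for two crossings and hence the coefficient~$2$ on $d_c(\mathbf{v})$. Your explicit observation that each Darboux arc has length $\pi/2$ (so crossings correspond exactly to sign changes of vertex positions) and your explicit partition of the $2n$ arcs by the tantrix corner they flank make the bookkeeping slightly more systematic than the paper's direct appeal to Table~\ref{tab:tantrix_projection}, but the substance is the same.
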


\begin{proof}
	As illustrated in Table~\ref{tab:tantrix_projection}, all regular intersections of the Darboux indicatrix with a great circle correspond to one of these diagrams. 	Each of the diagrams in Figs.~\ref{fig:tantrix_projection_a} and~\ref{fig:tantrix_projection_b} are counted by only one type of intersection of the Darboux indicatrix with a great circle (e.g., the upper diagram in Fig.~\ref{fig:tantrix_projection_a} is counted only by an arc of the Darboux indicatrix of the form $D_{2i}D_{2i+1}$ that goes from above the equator to below the equator, while the lower diagram in Fig.~\ref{fig:tantrix_projection_b} is counted only by an arc of the form $D_{2i-1}D_{2i}$ that goes from below the equator to above the equator).  On the other hand, both diagrams in Fig.~\ref{fig:tantrix_projection_c} are counted by two types of intersections of the Darboux indicatrix with a great circle (e.g. the upper diagram is counted by an arc of the form $D_{2i-1}D_{2i}$ that goes from below the equator to above the equator, as well as an arc of the form $D_{2i}D_{2i+1}$ that goes from above the equator to below the equator).  Hence the number of intersections of the Darboux indicatrix with the equator (and hence, with any great circle) is indeed given by the formula in Definition~\ref{def:stick_tantrix_inflection}.	
\end{proof}

	Having defined stick knot analogues of the maps defined by the various indicatrices, we now show that their graphs satisfy similar properties as in Sec.~\ref{sec:smooth_knots}.  As in Sec.~\ref{sec:smooth_knots}, we define the graph of a map on~$S^2$ to be the set of points $p \in S^2$ such that there does not exist an open neighborhood~$N_p$ of~$p$ on~$S^2$ (with the standard Euclidean topology) such that the value of the map is constant for all points $q \in N_p$ at which the map is defined.
	  
\begin{theorem}
	The bridge graph of a stick knot is the union of the binotrix and the anti-binotrix.
	\label{thm:bridge_polygonal}
\end{theorem}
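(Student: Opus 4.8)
The plan is to invoke Lemma~\ref{lem:stick_tantrix} to replace the bridge map $\mathfrak{b}_\mathbf{v}(K)$ by the number of intersections of the tantrix polygon $T$ with the great circle $G = \mathbf{v}^\perp$, and then to track how this count changes as $\mathbf{v}$ ranges over $S^2$. Each arc $T_iT_{i+1}$ of the tantrix is a great circle arc of length $\theta_i$ with $0 < \theta_i < \pi$, so it is strictly shorter than a semicircle; since $G$ meets the great circle carrying this arc in a single antipodal pair of points, at most one of which can lie on an arc of length less than $\pi$, the great circle $G$ meets each arc in either $0$ or $1$ points. This count can change only as $G$ sweeps through one of the endpoints $T_i$, so $\mathfrak{b}_\mathbf{v}(K)$ is locally constant off the union $\bigcup_i T_i^\perp$ of the great circles polar to the tantrix vertices, and the bridge graph is contained in this union.

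Next I would carry out the local analysis at a single vertex $T_i$, crossing $T_i^\perp$ transversally at a generic point (by the general position hypotheses this point lies on no other $T_j^\perp$). As $G$ sweeps through $T_i$, the intersections of $G$ with the two adjacent arcs $T_{i-1}T_i$ and $T_iT_{i+1}$ both appear or disappear at the shared endpoint $T_i$. Modelling the situation in the tangent plane at $T_i$, one checks directly that the net change in $\mathfrak{b}_\mathbf{v}(K)$ is $\pm 2$ when the two arcs lie locally on the \emph{same} side of $G$ (so that $G$ is a supporting great circle at the corner), and $0$ when they lie on opposite sides (so that the tantrix crosses $G$ transversally). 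The transition between these two regimes occurs exactly when $G$ contains one of the two arcs, that is, when $\mathbf{v} = \pm B_{i-1}$ or $\mathbf{v} = \pm B_i$; these four points do lie on $T_i^\perp$, since $\widetilde{B}_{i-1}\cdot T_i = \widetilde{B}_i \cdot T_i = 0$.

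It then remains to identify the supporting portion of $T_i^\perp$ with the correct arc of the binotrix. The four points $\pm B_{i-1}, \pm B_i$ cut $T_i^\perp$ into arcs, and I would show that the two arcs on which $G$ supports the tantrix at $T_i$ are precisely the binotrix arc $B_{i-1}B_i$ and its antipode. This is the polygonal incarnation of the fact that the dual curve is swept out by the poles of the supporting great circles: as the supporting great circle pivots about the corner $T_i$, its pole moves continuously from $B_{i-1}$ to $B_i$ through an angular range equal to the exterior angle of the tantrix at $T_i$, which by the relation $B = T^\vee$ and McRae's definition of the dual polygon is exactly the length $\abs{\phi_i}$ of the geodesic segment $B_{i-1}B_i$. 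Taking the union over all vertices, these supporting arcs assemble into the binotrix and the anti-binotrix. No extra great circles appear, in contrast to the smooth case of Theorem~\ref{thm:bridgeinflection}: because the polygonal curvature $\theta_i$ is strictly positive at every vertex, the tantrix has no cusps (no $\kappa$-inflection points), and the great-circle contributions present in the smooth statement are simply absent.

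The main obstacle will be the identification in the last paragraph. One must verify carefully, keeping track of the co-orientation, that the same-side (supporting) arc of $T_i^\perp$ is the geodesic segment running from $B_{i-1}$ to $B_i$ of length equal to the exterior angle at $T_i$, and not its complementary arc; this reduces to a sign check comparing the two tantrix directions at $T_i$ against the rotating normal of $G$, together with a confirmation that both endpoints and the total length match those prescribed for $T^\vee = B$. The remaining bookkeeping—assembling the arcs consistently around the closed polygon, including the transition points $\pm B_{i-1}, \pm B_i$ in the closure, and ruling out accidental coincidences—is routine given the general position assumptions on $X$.
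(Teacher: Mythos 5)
Your proposal is correct and follows essentially the same route as the paper's own proof: both reduce the bridge map to tantrix--great-circle intersection counts via Lemma~\ref{lem:stick_tantrix}, localize all changes of the count to the polar great circles $T_i^\perp$ of the tantrix vertices, and identify the arcs where the count actually jumps (your ``supporting'' configurations) with the binotrix arcs $B_{i-1}B_i$ and their antipodes via the duality $B = T^\vee$ and the co-orientation sign check. The only organizational difference is that the paper handles the degenerate poles $\pm B_i$ (great circles containing an entire tantrix arc) by an explicit computation of how the adjacent binotrix arcs sit relative to the adjacent tantrix arcs, whereas you absorb these finitely many points into a closure/bookkeeping step; both dispositions are adequate.
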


\begin{theorem}
	The inflection graph of a stick knot is the spherical polygon obtained by connecting vertices of the tantrix and the anti-tantrix according to the following rule: we connect $T_i$ to $T_{i+1}$ and $-T_i$ to $-T_{i+1}$ if the torsions $\tau_i$ and $\tau_{i+1}$ associated to the edges $X_{i-1}X_i$ and $X_iX_{i+1}$ have the same sign, and we connect $T_i$ to $-T_{i+1}$ and $-T_i$ to $T_{i+1}$ if $\tau_i$ and $\tau_{i+1}$ have opposite signs.
	\label{thm:inflection_polygonal}
\end{theorem}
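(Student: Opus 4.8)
The plan is to read off the inflection graph directly from Lemma~\ref{lem:stick_binotrix}, which identifies $\mathfrak{i}_{\mathbf{v}}(K)$ with the number of intersections of the binotrix with the great circle $G_{\mathbf{v}}$ orthogonal to $\mathbf{v}$; by Definition~\ref{def:graph} the inflection graph is the locus where this count fails to be locally constant. Since the binotrix is a spherical geodesic polygon and two distinct great circles always meet transversally, no intersection can appear or disappear in the interior of an edge, so the count can change only as $G_{\mathbf{v}}$ sweeps across a vertex $B_i$, that is, only for $\mathbf{v}\in B_i^{\perp}$. Because $B_i=T_i\times T_{i+1}$ (up to scaling) is orthogonal to both $T_i$ and $T_{i+1}$, the circle $B_i^{\perp}$ is exactly the great circle through $T_i$, $T_{i+1}$, $-T_i$, and $-T_{i+1}$. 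Thus the inflection graph lies in $\bigcup_i B_i^{\perp}$, and the remaining task is to cut out, on each $B_i^{\perp}$, the precise sub-arc along which the count genuinely jumps.

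For this I would run a local computation at each $B_i$, in the style of the proof of Lemma~\ref{lem:stick_binotrix}. Rotating the sphere so that $B_i$ is the north pole puts $T_i$ and $T_{i+1}$ on the equator $B_i^{\perp}$ and makes each $G_{\mathbf{v}}$ (for $\mathbf{v}$ on the equator) a meridian through $B_i$. As $\mathbf{v}$ crosses $B_i^{\perp}$ the pole $B_i$ passes from one side of $G_{\mathbf{v}}$ to the other, and an elementary case analysis shows the count changes (by $\pm 2$) precisely when the two incident binotrix edges $B_{i-1}B_i$ and $B_iB_{i+1}$ lie on the same side of $G_{\mathbf{v}}$, and is unchanged when $G_{\mathbf{v}}$ separates them. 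These two edges lie on the great circles $T_i^{\perp}$ and $T_{i+1}^{\perp}$, so their directions at the pole are readily computed, and the ``same side'' condition carves out an arc of $B_i^{\perp}$ whose endpoints are two of the four points $\pm T_i$, $\pm T_{i+1}$.

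It then remains to decide which pair of endpoints occurs, and this is where the torsion enters. The direction in which the edge $B_iB_{i+1}$ leaves $B_i$ along $T_{i+1}^{\perp}$ --- toward $T_{i+1}$ or toward $-T_{i+1}$ --- is controlled by the sign of $\tau_{i+1}$ (equivalently, by whether $T_{i+2}$ lies above or below the equator, using the convention of Sec.~\ref{sec:preliminaries_polygonal}), and symmetrically the edge $B_{i-1}B_i$ along $T_i^{\perp}$ is controlled by $\tau_i$. Feeding these two signs into the ``same side'' condition, one finds that the jump-arc runs from $T_i$ to $T_{i+1}$ (and antipodally from $-T_i$ to $-T_{i+1}$) when $\tau_i$ and $\tau_{i+1}$ have the same sign, and from $T_i$ to $-T_{i+1}$ (and from $-T_i$ to $T_{i+1}$) when they have opposite signs --- exactly the connection rule asserted. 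Finally, since $B_i^{\perp}$ and $B_{i+1}^{\perp}$ meet in $\pm T_{i+1}$, consecutive jump-arcs share an endpoint, so they assemble into a single closed spherical polygon on the vertex set $\{\pm T_i\}$, as claimed.

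I expect the sign bookkeeping of the third step to be the main obstacle: translating the local ``same side'' condition at $B_i$ into a statement about the signs of $\tau_i$ and $\tau_{i+1}$ requires fixing the orientation and co-orientation conventions consistently and carefully tracking when the arc must flip from the $T$-copy to the $(-T)$-copy of a tantrix vertex. A minor separate point is the degenerate set of directions $\mathbf{v}=\pm T_j$, where $G_{\mathbf{v}}$ contains an entire binotrix edge; under the convention that an intersection over an interval counts once, these directions are exactly the vertices $\pm T_j$ of the resulting polygon and require no special treatment.
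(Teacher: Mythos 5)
Your strategy is sound and is, in substance, the paper's own: the paper likewise starts from Lemma~\ref{lem:stick_binotrix} and then invokes Remark~\ref{rem:reinterpretation} (the general principle, abstracted from the proof of Theorem~\ref{thm:bridge_polygonal}, that the graph of the map counting intersections of a spherical polygon $P$ with great circles is the union of the polygon formed by the ``left-dual'' points of the edges of $P$ and its antipodal polygon). Your first two steps and your final assembly step re-derive this principle for $P$ equal to the binotrix, and they are correct, including your handling of the degenerate directions $\mathbf{v}=\pm T_j$.

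The gap is your third step, and you flagged it yourself. Deciding which pair of the four arcs of $B_i^{\perp}$ cut out by $\pm T_i,\pm T_{i+1}$ carries the jump is not peripheral bookkeeping: it is the only place the torsion enters, and it is the entire content of the theorem beyond the general duality principle, so asserting that ``one finds'' the stated rule leaves the theorem unproved. Concretely, what must be shown is that the point lying $\pi/2$ to the left of the directed binotrix edge $B_{i-1}B_i$ is $T_i$ when $\tau_i>0$ and $-T_i$ when $\tau_i<0$; the paper does this in one line via the vector triple product
\[
\widetilde{B}_{i-1}\times\widetilde{B}_i=(T_{i-1}\times T_i)\times(T_i\times T_{i+1})=\left((T_{i-1}\times T_i)\cdot T_{i+1}\right)T_i,
\]
whose scalar factor has the sign of $\tau_i$ (the paper also gives an equivalent geometric argument by placing the tantrix arc $T_{i-1}T_i$ on the equator). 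In your formulation the jump condition at $B_i$ is $(\mathbf{v}\cdot B_{i-1})(\mathbf{v}\cdot B_{i+1})>0$ for $\mathbf{v}\in B_i^{\perp}$, and this identity is exactly what pins down where each factor changes sign and with which orientation, yielding the stated connection rule. A minor related imprecision: you describe the edge $B_iB_{i+1}$ as leaving $B_i$ ``toward $T_{i+1}$ or toward $-T_{i+1}$,'' but $\pm T_{i+1}$ do not lie on $T_{i+1}^{\perp}$; the quantity your argument actually needs is the side of $G_{\mathbf{v}}$ on which $B_{i+1}$ lies, i.e., the sign of $\mathbf{v}\cdot B_{i+1}$.
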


	Although the proofs of Theorems~\ref{thm:bridge_polygonal} and~\ref{thm:inflection_polygonal} are more involved than the proof of Theorem~\ref{thm:bridgeinflection}, they also provide more insight into the relationship between the tantrix, the binotrix and the sign of the torsion.

\begin{proof}[Proof of Theorem~\ref{thm:bridge_polygonal}]
	First, consider an intersection of the tantrix with a great circle in the vicinity of a vertex $T_i$ of the tantrix, as shown in Figs.~\ref{fig:tantrix_vertex} and ~\ref{fig:tantrix_vertex2}.  Recall that we assumed that no two (undirected) edges of the knot are parallel; this implies that no two vertices of the tantrix coincide and no vertex of the tantrix coincides with a vertex of the anti-tantrix.  By the definition of the binotrix, the vertices $B_{i-1}$ and~$B_i$ lie on the great circle orthogonal to~$T_i$ and always lie on the left side of the directed tantrix arcs $T_{i-1}T_i$ and $T_iT_{i+1}$ respectively as seen from from the exterior of the sphere.  The number of intersections of the tantrix with a great circle changes exactly when the center of the great circle moves across the open arc $B_{i-1}B_i$ and its antipodal arc, by our assumption on the vertices of the tantrix, thus contributing to the bridge graph all the open arcs of the binotrix.

\begin{figure}[ht]
	\centering
	\subfloat[]{\label{fig:tantrix_vertex}\includegraphics[width=0.3\textwidth]{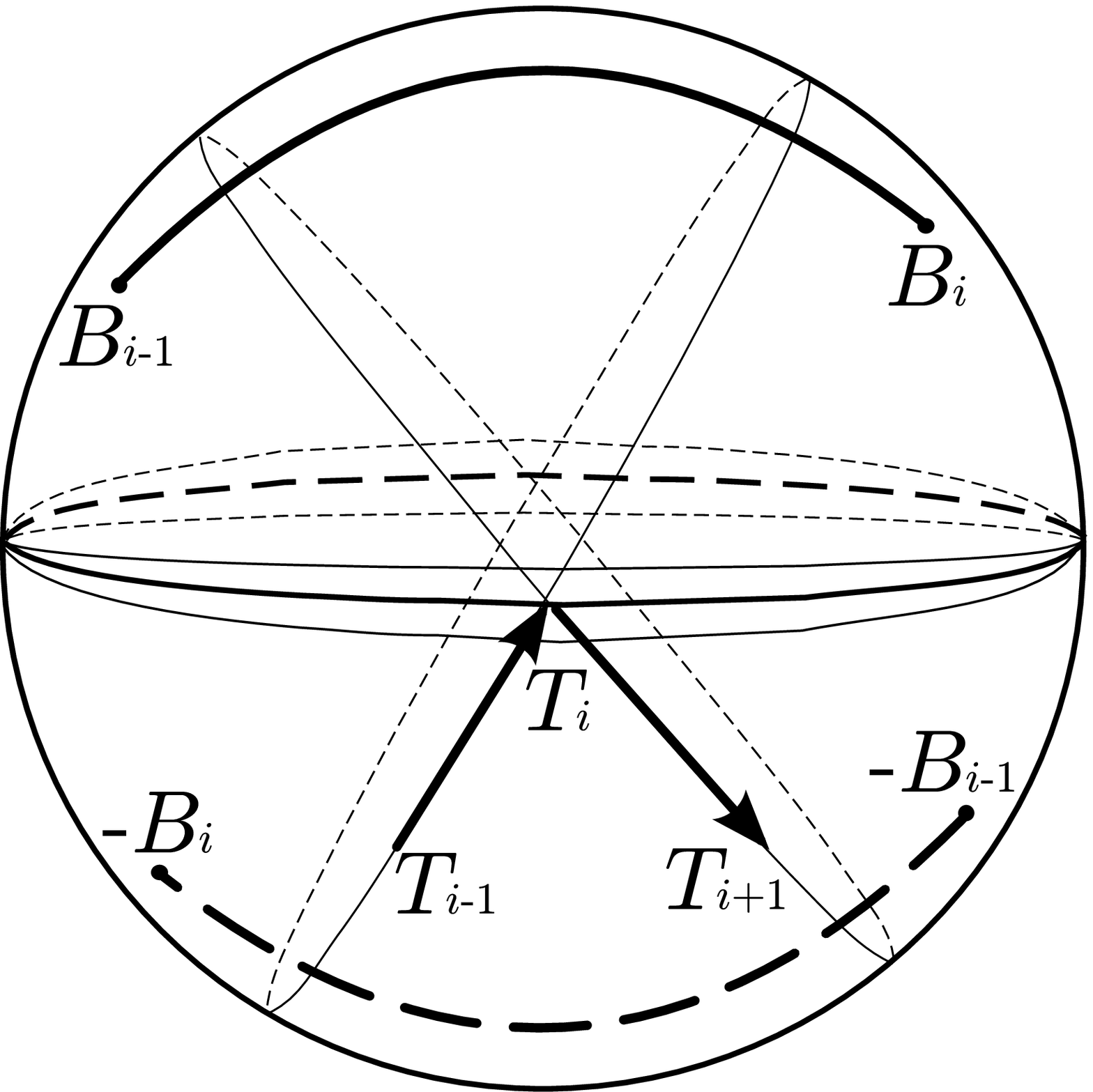}}\qquad                
	\subfloat[]{\label{fig:tantrix_vertex2}\includegraphics[width=0.3\textwidth]{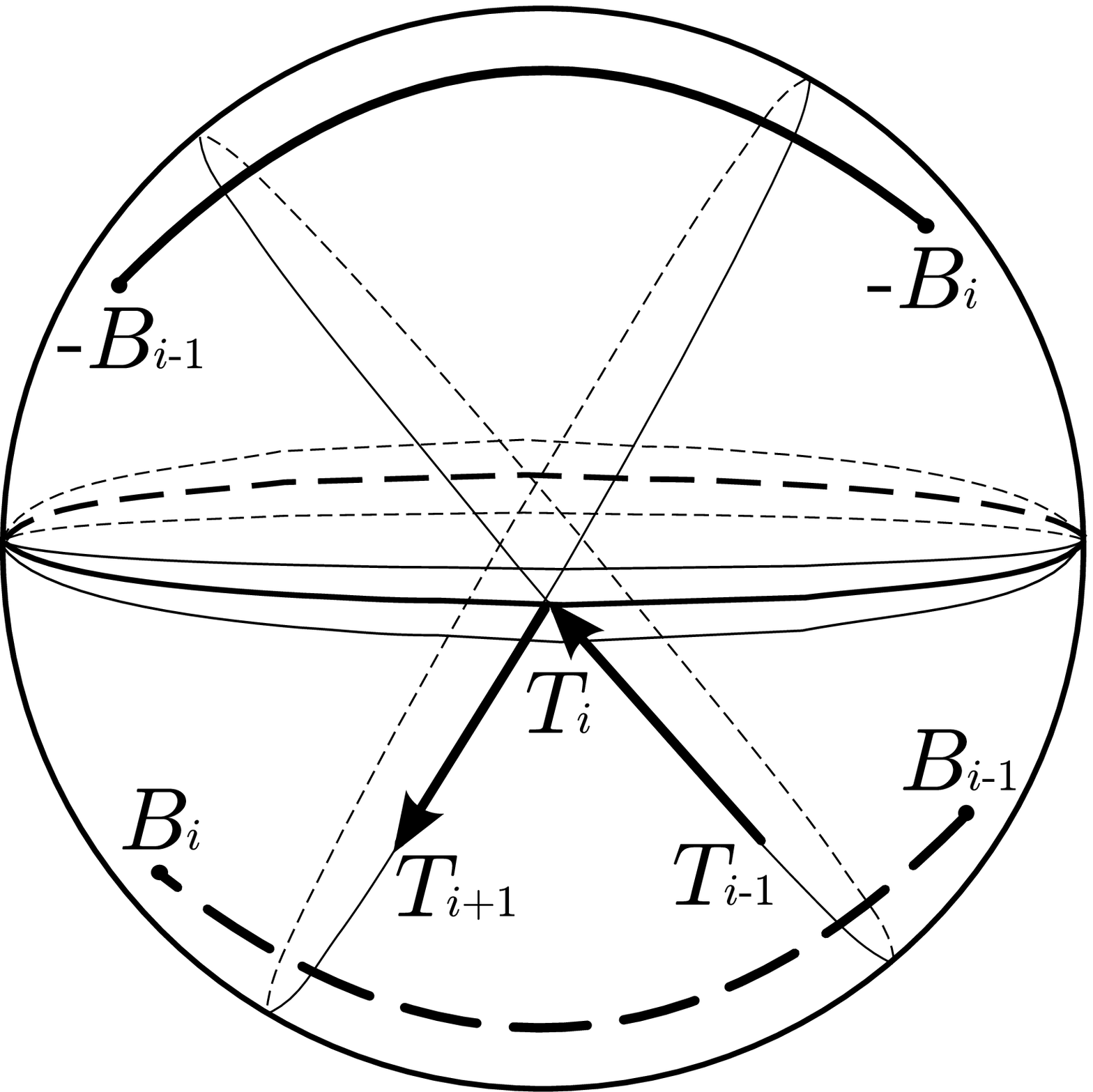}}
	\caption{Intersection of the tantrix with a great circle in the vicinity of a vertex of the tantrix.}
	\label{fig:tantrix_vertex_intersection}
\end{figure}

	Next, consider the situation when the great circle moves across an arc of the tantrix while parallel to it, as shown in Figs.~\ref{fig:tantrix_edge_up} and~\ref{fig:tantrix_edge_down}.  Orient the sphere such that the tantrix arc $T_iT_{i+1}$ lies on the equator, rotates counterclockwise as viewed from the north pole, and intersects the negative $y$-axis.

\begin{figure}[ht]
	\centering
	\subfloat[]{\label{fig:tantrix_edge_up}\includegraphics[width=0.3\textwidth]{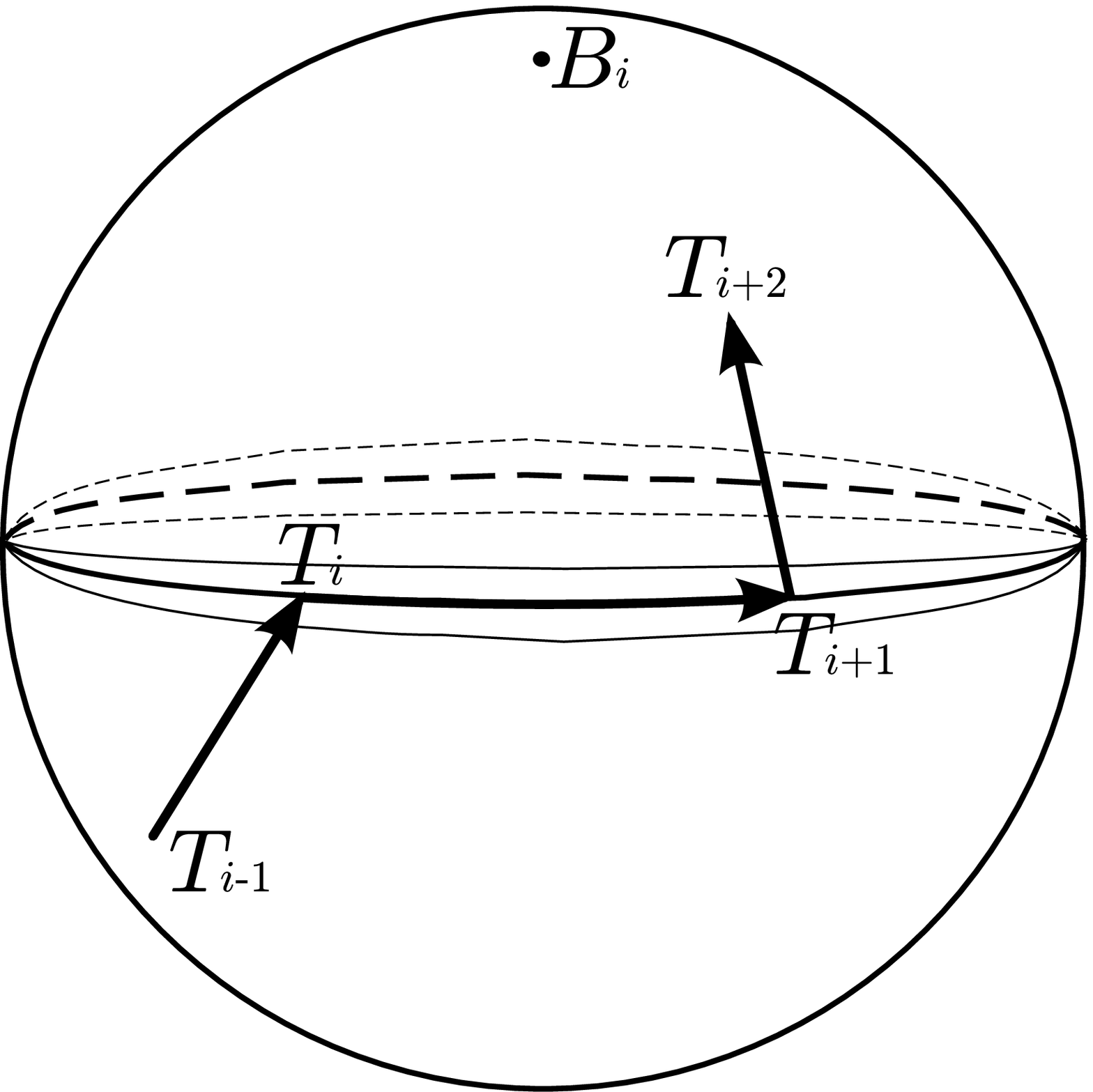}}\qquad                
	\subfloat[]{\label{fig:tantrix_edge_down}\includegraphics[width=0.3\textwidth]{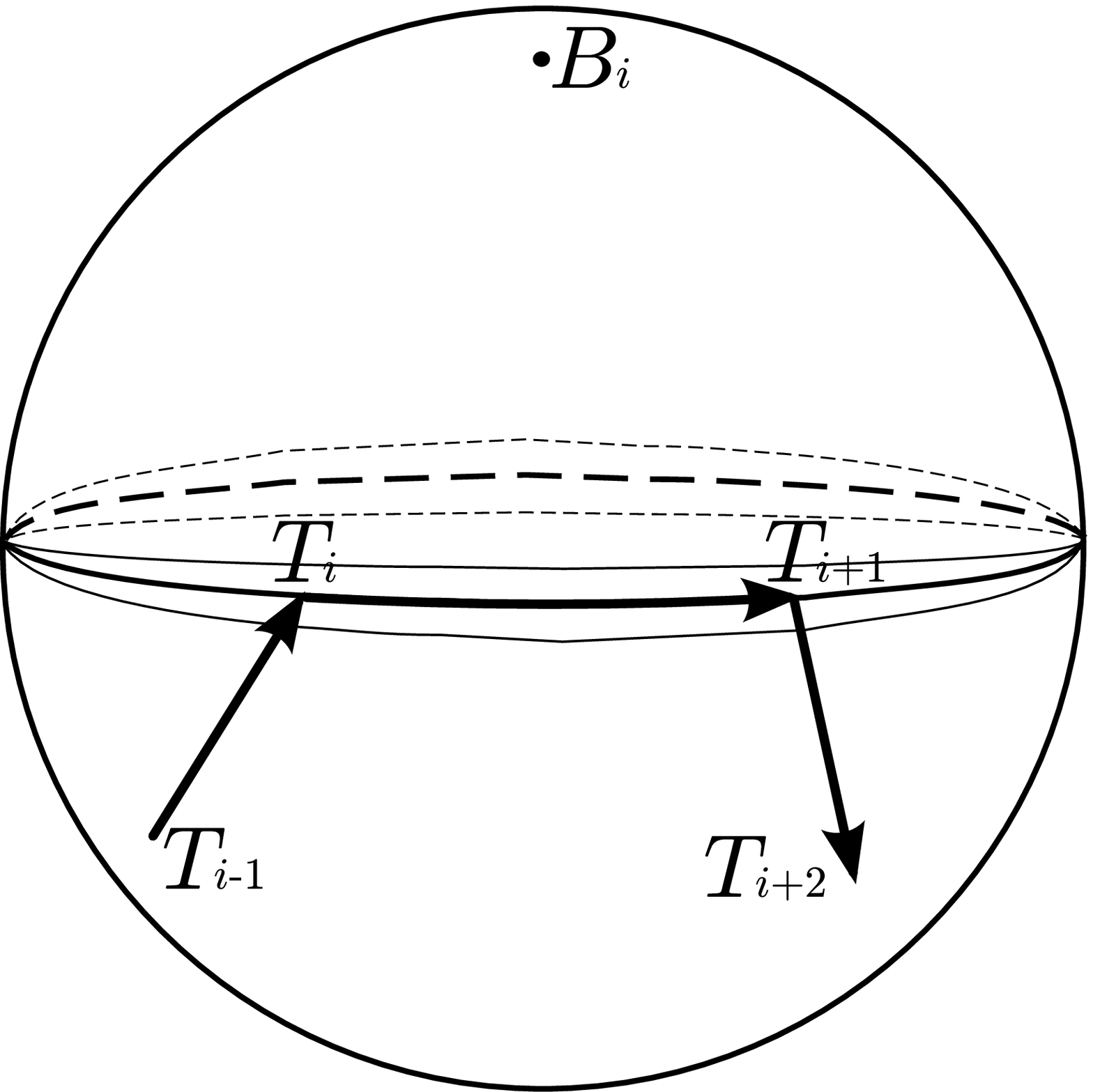}}
	\caption{Great circle moves across an arc of the tantrix while parallel to it.}
	\label{fig:tantrix_edge_intersection}
\end{figure}

	Since $B_{i-1}$ is perpendicular to~$T_i$, it lies on the great circle~$\Gamma_{i-1}$ dual to~$T_i$, and since $B_{i+1}$ is perpendicular to~$T_{i+1}$, it lies on the great circle~$\Gamma_{i+1}$ dual to~$T_{i+1}$, as illustrated in Fig.~\ref{fig:binotrix_top}, which shows a projection onto the equatorial plane.  Depending on whether the tantrix arcs $T_{i-1}T_i$ and $T_{i+1}T_{i+2}$ lie above or below the equator, the vertices $B_{i-1}$ and $B_{i+1}$ of the binotrix lie on different sides of~$B_i$, as shown in Fig.~\ref{fig:binotrix_top} (this can easily be seen by computing cross products and considering their $y$-coordinates).  In particular, we see that the binotrix arcs $B_{i-1}B_i$ and $B_iB_{i+1}$ lie on the same side of the $yz$-plane if and only if $T_{i-1}T_i$ and $T_{i+1}T_{i+2}$ lie on the same side of the equator (see Figs.~\ref{fig:binotrix_top_ex} and~\ref{fig:binotrix_top_ex2} for examples).  Hence the number of intersections of the tantrix with a great circle changes only when we move into a different region defined by the binotrix.  This completes the proof.

\begin{figure}[ht]
	\centering \includegraphics[width=0.3\textwidth]{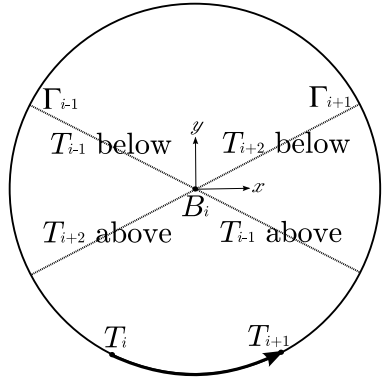}
	\caption{Location of $B_{i-1}$ and $B_{i+1}$ depending on whether the tantrix arcs $T_{i-1}T_i$ and $T_{i+1}T_{i+2}$ lie above or below the equator, as seen from a projection onto the equatorial plane.  Note that the great circles $\Gamma_{i-1}$ and $\Gamma_{i+1}$ are projected onto line segments and that $B_{i-1}$ and $B_{i+1}$ can lie on either side of the equator.}
	\label{fig:binotrix_top}
\end{figure}

\begin{figure}[ht]
	\centering
	\subfloat[]{\label{fig:binotrix_top_ex}\includegraphics[width=0.3\textwidth]{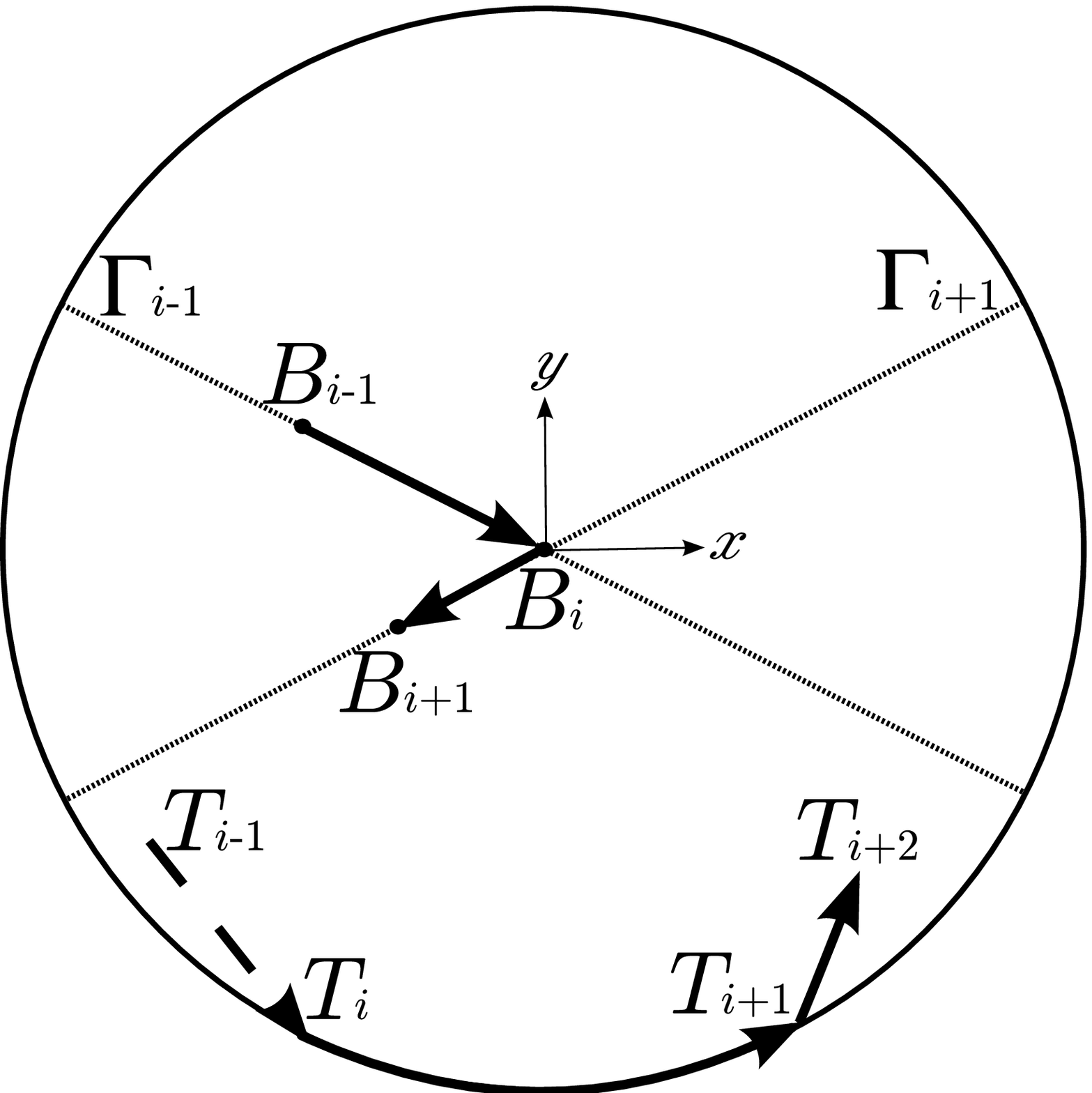}}\qquad                
	\subfloat[]{\label{fig:binotrix_top_ex2}\includegraphics[width=0.3\textwidth]{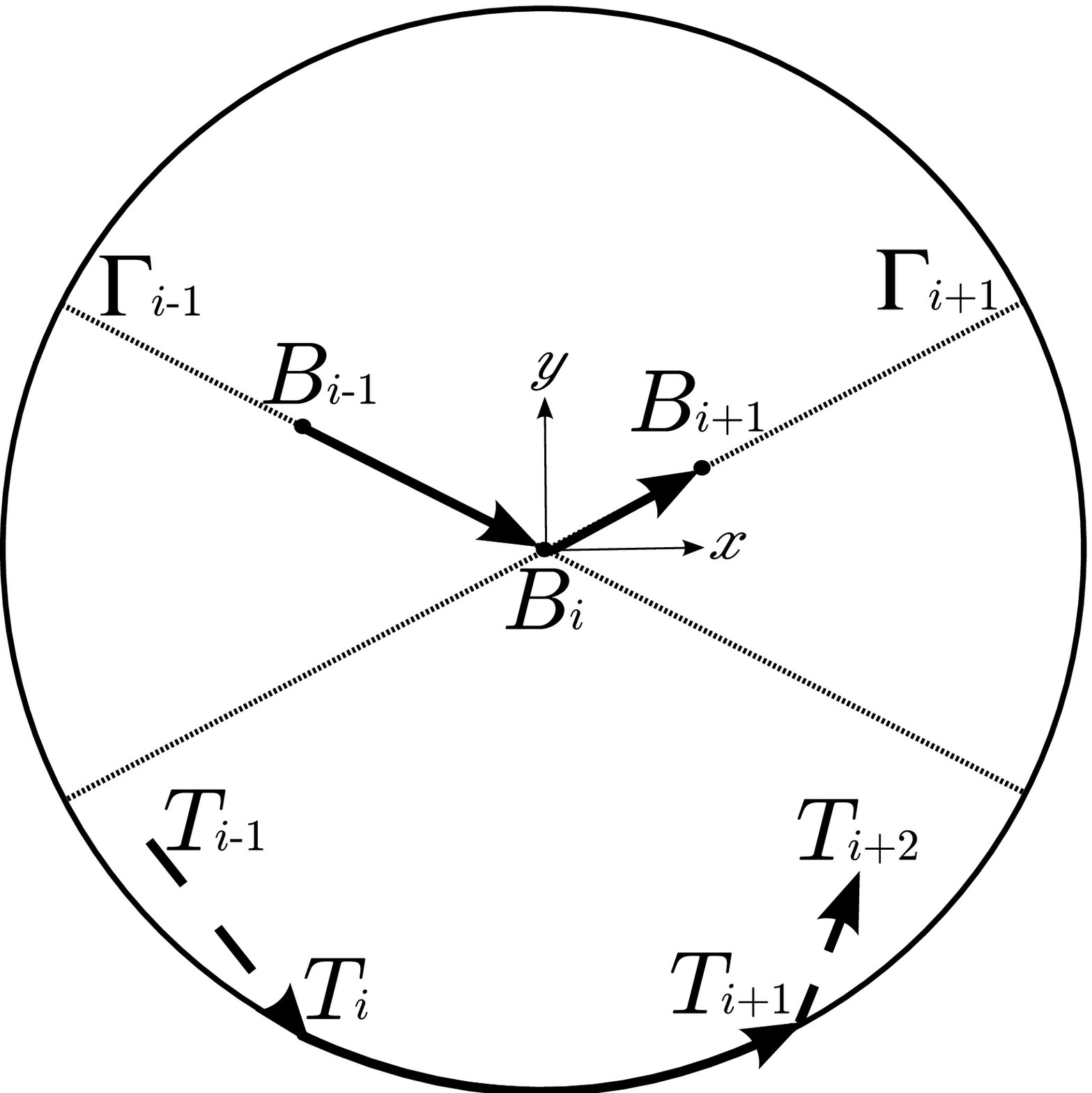}}
	\caption{Examples showing how the binotrix arcs $B_{i-1}B_i$ and $B_iB_{i+1}$ lie on the same side of the $yz$-plane if and only if $T_{i-1}T_i$ and $T_{i+1}T_{i+2}$ lie on the same side of the equator.}
	\label{fig:binotrix_top_examples}
\end{figure}

\end{proof}

\begin{remark}
	Theorem~\ref{thm:bridge_polygonal} can be re-interpreted in the following manner.  Let~$P$ be a directed spherical polygon with sides $(l_0, l_1, \ldots, l_{n-1})$, such that no two of its vertices coincide and none of its vertices coincides with a vertex of its antipodal spherical polygon.  For each geodesic segment~$l_i$, let~$w_i$ be the point on~$S^2$ obtained by moving a distance of~$\pi/2$ normal to the geodesic segment and towards its left as viewed from the exterior of the sphere.  Let~$P^*$ be the spherical polygon with vertices $(w_0, w_1, \ldots, w_{n-1})$.  Then the graph of the map on~$S^2$ that assigns to each vector $\mathbf{v} \in S^2$ the number of intersections of~$P$ with the great circle orthogonal to~$\mathbf{v}$ is given by the union of~$P^*$ and its antipodal spherical polygon.
	
	In particular, if~$P$ is a directed co-oriented spherical polygon such that the co-orienting normal points towards the left when viewed from the exterior of the sphere, then the vertices of the dual spherical polygon are precisely the~$w_i$'s and hence the graph of the map on~$S^2$ will simply be the union of~$P^\vee$ and its antipodal spherical polygon.
	\label{rem:reinterpretation}
\end{remark}

\begin{proof}[Proof of Theorem~\ref{thm:inflection_polygonal}]
	Recall that we assumed that no two osculating planes of~$X$ are parallel, so that no two vertices of the binotrix coincide and no vertex of the binotrix coincides with a vertex of the anti-binotrix.  By Remark~\ref{rem:reinterpretation}, it suffices to show that the vertex obtained by moving a distance of~$\pi/2$ normal to the binotrix arcs and towards their left as viewed from the exterior of the sphere is~$T_i$ when the torsion~$\tau_i$ is positive, and~$-T_i$ when the torsion~$\tau_i$ is negative. Orient the sphere such that the tantrix arc $T_{i-1}T_i$ lies on the equatorial $xy$-plane and rotates counterclockwise as viewed from the north pole, and $T_i$ lies on the negative $y$-axis.  Since $B_i$ is orthogonal to $T_i$, it lies on the unit circle in the $xz$-plane.  If $\tau_i$~is positive, then $T_{i+1}$ lies above the equator and thus the $x$-coordinate of $B_i$ is negative, as illustrated in Fig.~\ref{fig:sideofbinotrix_up}.  Hence $T_i$ lies on the left side of $B_{i-1}B_i$.  On the other hand, if $\tau_i$~is negative, then $T_{i-1}$ lies below the equator and thus the $x$-coordinate of $B_i$ is positive, as shown in Fig.~\ref{fig:sideofbinotrix_down}.  Hence $T_i$ lies on the right side of $B_{i-1}B_i$, that is, $-T_i$ lies on the left side of $B_{i-1}B_i$, as required.

\begin{figure}[ht]
	\centering
	\subfloat[$\tau_i > 0$]{\label{fig:sideofbinotrix_up}\includegraphics[width=0.3\textwidth]{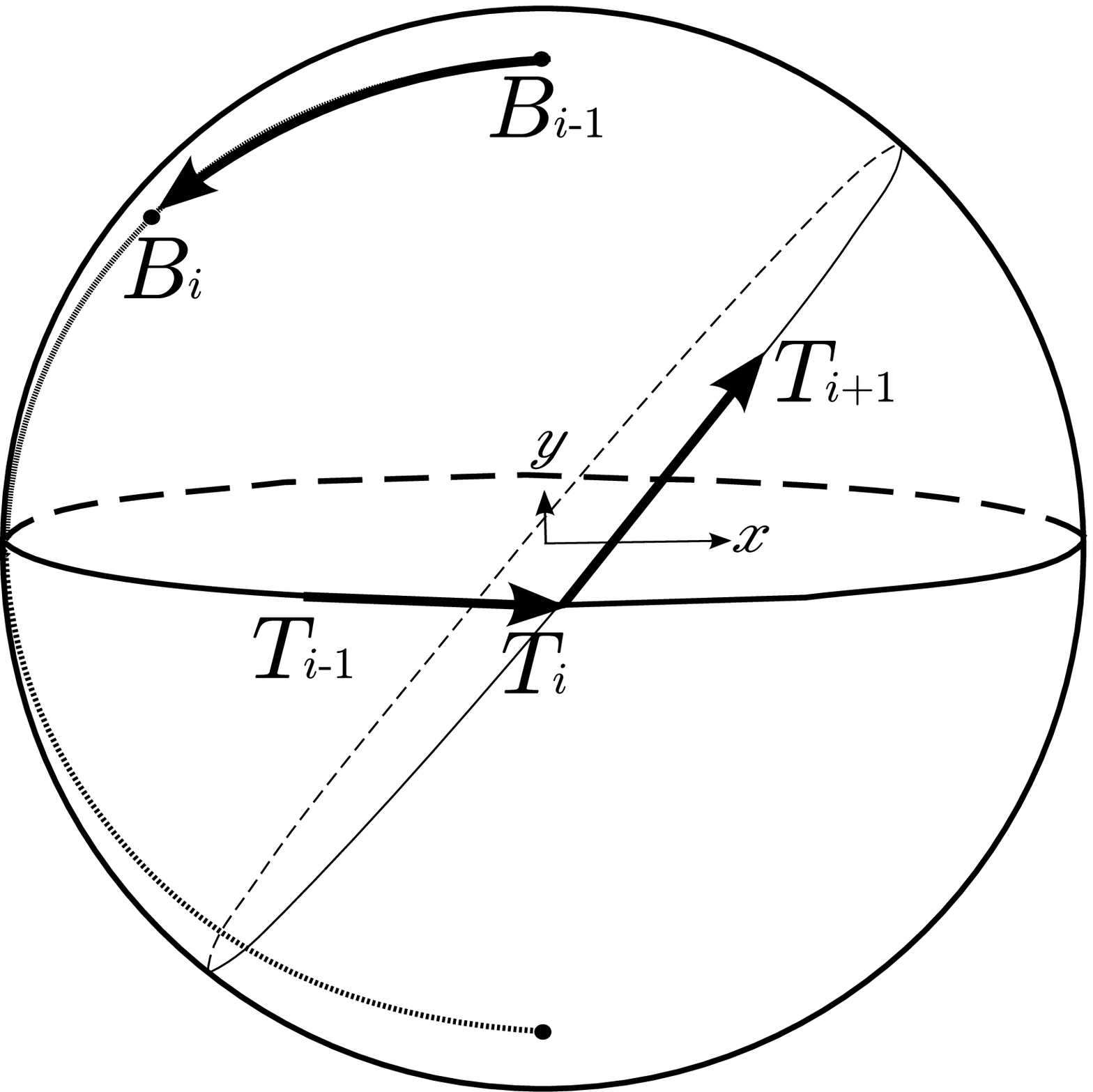}}\qquad        
	\subfloat[$\tau_i < 0$]{\label{fig:sideofbinotrix_down}\includegraphics[width=0.3\textwidth]{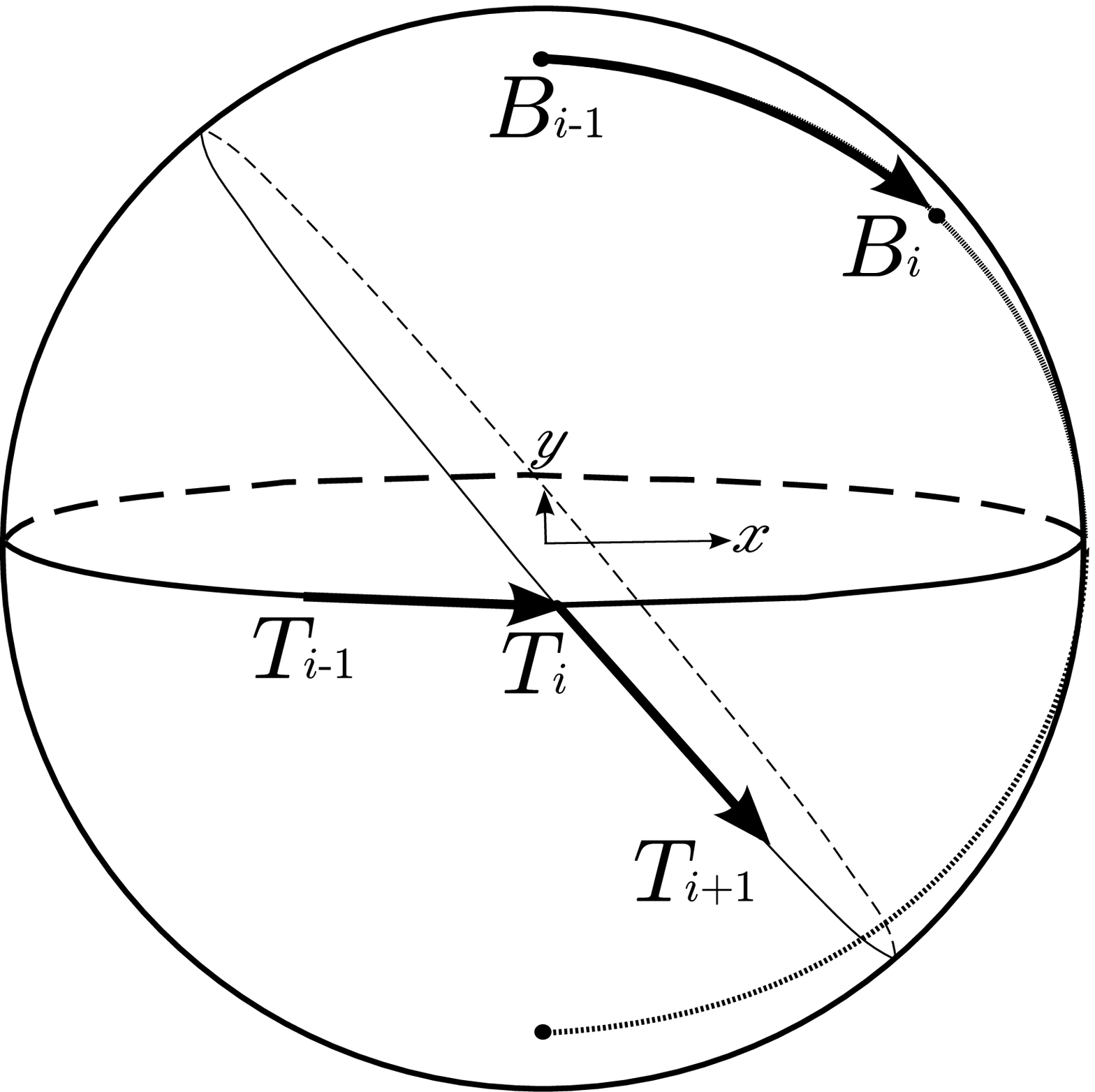}}
	\caption{Dependence of the $x$-coordinate of $B_i$ on the torsion $\tau_i$ and its effect on the side of $B_{i-1}B_i$ that $T_i$ lies on.}
	\label{fig:sideofbinotrix}
\end{figure}

	Alternatively, this can also be seen from the vector triple product formula.  We have
	\[\widetilde{B}_{i-1} \times \widetilde{B}_i = (T_{i-1} \times T_i) \times (T_i \times T_{i+1}) = ((T_{i-1} \times T_i) \cdot T_{i+1})T_i.\]
Since the sign of $(T_{i-1} \times T_i) \cdot T_{i+1}$ is precisely the sign of~$\tau_i$, we see that the vertex obtained by moving a distance of $\pi/2$ normal to $B_{i-1}B_i$ and towards the left of $B_{i-1}B_i$ is~$T_i$ when $\tau_i$~is positive, and $-T_i$ when $\tau_i$~is negative, as required.
\end{proof}

\begin{remark}
	Theorem~\ref{thm:inflection_polygonal} provides an alternative interpretation of the great circles tangent to the tantrix and anti-tantrix at $\tau$-inflection points of a smooth knot in Theorem~\ref{thm:bridgeinflection}.  As we remarked earlier, we can talk about a change in the sign of the torsion at a vertex~$B_i$ of the binotrix of a stick knot if $\tau_i$ and~$\tau_{i+1}$ have different signs.  The arcs of the inflection graph corresponding to such a vertex~$B_i$ join $T_i$ to $-T_{i-1}$ and $-T_i$ to $T_{i+1}$.  Similarly, $\tau$-inflection points of a smooth knot are points where the sign of the torsion changes, and the great circles tangent to the tantrix and anti-tantrix can be viewed as the union of two antipodal great circle arcs joining the tantrix to the antitantrix.

	In the case of the bridge graph of a stick knot, all the arcs connect two adjacent vertices of the binotrix and two adjacent vertices of the anti-binotrix.  This reflects the fact that the sign of curvature, as defined in Sec.~\ref{sec:preliminaries_smooth}, unlike that of torsion, has no geometric meaning, and serves merely to allow us to have isolated points of zero curvature on a smooth knot.
	\label{rem:greatcirclestangent}
\end{remark}

	The proof of Theorem~\ref{thm:inflection_polygonal} easily yields the following proposition.

\begin{proposition}
	The Darboux indicatrix of a stick knot is the dual of its notrix.
	\label{prop:notrix_darboux_dual}
\end{proposition}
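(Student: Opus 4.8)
The plan is to show that the vertices produced by McRae's dual construction applied to the notrix~$N$ are exactly the vertices~$D_j$ of the Darboux indicatrix, in matching cyclic order. Recall that $N$ has $2n$ vertices, with $N_{2i} = T_i \times B_i$ and $N_{2i+1} = T_{i+1} \times B_i$, and that it is co-oriented so that the co-orienting normal points to the left as seen from the exterior of the sphere. By the definition of the dual, each vertex of $N^\vee$ is obtained by moving a distance~$\pi/2$ along the co-orienting normal away from a side of~$N$; for a side $N_jN_{j+1}$ the left co-orientation selects the pole $\frac{N_j \times N_{j+1}}{\norm{N_j \times N_{j+1}}}$ of the great circle containing it. So I would compute $N_j \times N_{j+1}$ for each of the two types of side of~$N$ and compare with~$D$.

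For a side $N_{2i}N_{2i+1}$, both endpoints are orthogonal to $B_i$, so it lies on the great circle with pole $\pm B_i$. Expanding the quadruple product,
\[
N_{2i} \times N_{2i+1} = (T_i \times B_i) \times (T_{i+1} \times B_i) = -\big((T_i \times B_i) \cdot T_{i+1}\big)\, B_i,
\]
and since $T_i \times T_{i+1}$ is a positive multiple of $B_i$ one checks that $(T_i \times B_i)\cdot T_{i+1} < 0$, so $N_{2i} \times N_{2i+1}$ is a positive multiple of $B_i$. Hence this dual vertex is $B_i = D_{2i}$.

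For a side $N_{2i+1}N_{2i+2}$, both endpoints are orthogonal to $T_{i+1}$, and the quadruple product collapses to
\[
N_{2i+1} \times N_{2i+2} = (T_{i+1} \times B_i) \times (T_{i+1} \times B_{i+1}) = \big((B_i \times B_{i+1}) \cdot T_{i+1}\big)\, T_{i+1},
\]
so this dual vertex is $\pm T_{i+1}$. To fix the sign I would reuse the computation from the proof of Theorem~\ref{thm:inflection_polygonal}: there $\widetilde{B}_i \times \widetilde{B}_{i+1} = \big((T_i \times T_{i+1})\cdot T_{i+2}\big)\,T_{i+1}$, whose scalar coefficient has the sign of~$\tau_{i+1}$. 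Since $B_i$ and $B_{i+1}$ are positive multiples of $\widetilde{B}_i$ and $\widetilde{B}_{i+1}$, the sign of $(B_i \times B_{i+1})\cdot T_{i+1}$ is also that of~$\tau_{i+1}$, so the dual vertex equals $T_{i+1}$ when $\tau_{i+1} > 0$ and $-T_{i+1}$ when $\tau_{i+1} < 0$---precisely $D_{2i+1}$.

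With both families of vertices identified, the vertices of $N^\vee$ coincide in order with those of~$D$; as both spherical polygons join consecutive vertices by the (shorter) geodesic arcs between them, the two polygons agree, giving $D = N^\vee$. I expect the only delicate point to be keeping the co-orientation straight, so that each dual vertex comes out as the intended vector rather than its antipode---but this is exactly the sign bookkeeping already carried out in Theorem~\ref{thm:inflection_polygonal}, so no genuinely new obstacle arises.
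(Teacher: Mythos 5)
Your proposal is correct and follows essentially the same route as the paper's proof: compute the cross products of consecutive notrix vertices to identify the dual points of the notrix arcs, obtaining $B_i$ for the arcs $N_{2i}N_{2i+1}$ and $\pm T_{i+1}$ (with sign governed by the sign of the torsion, via the same triple-product identity used in Theorem~\ref{thm:inflection_polygonal}) for the other arcs, which are exactly the vertices of the Darboux indicatrix. The only cosmetic difference is that the paper simplifies $(T_i \times B_i)\times(T_{i+1}\times B_i)$ directly to $T_i \times T_{i+1} = \widetilde{B}_i$, avoiding your separate positivity check of the scalar factor.
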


\begin{proof}
	We only need to verify that the points dual to the arcs of the notrix are the vertices of the Darboux indicatrix.  This follows immediately from the equations
	\[N_{2i-1} \times N_{2i} = (T_i \times B_{i-1}) \times (T_i \times B_i) = B_{i-1} \times B_i = \frac{(T_{i-1} \times T_i) \cdot T_{i+1}}{\norm{T_{i-1} \times T_i} \cdot \norm{T_i \times T_{i+1}}} T_i\]
and
	\[N_{2i} \times N_{2i+1} = (T_i \times B_i) \times (T_{i+1} \times B_i) = T_i \times T_{i+1} = \widetilde{B}_i,\]
and the fact that $(T_{i-1} \times T_i) \cdot T_{i+1}$ has the same sign as the torsion~$\tau_i$.
\end{proof}

	Remark~\ref{rem:reinterpretation} immediately yields the following corollary to Proposition~\ref{prop:notrix_darboux_dual}.

\begin{corollary}
	The tantrix-bridge graph of a stick knot is the union of the Darboux indicatrix and the anti-Darboux indicatrix.
	\label{cor:tantrixbridge_polygonal}
\end{corollary}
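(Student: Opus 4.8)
The plan is to chain together three facts already in hand: the lemma following Definition~\ref{def:stick_tantrix-bridge}, which identifies the tantrix-bridge map with the intersection-counting map of the notrix; the general principle recorded in Remark~\ref{rem:reinterpretation}; and the duality of Proposition~\ref{prop:notrix_darboux_dual}. The corollary should then drop out with essentially no computation.

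First I would recall that by the lemma immediately following Definition~\ref{def:stick_tantrix-bridge}, for every $\mathbf{v} \in S^2$ the value $\mathfrak{tb}_\mathbf{v}(K)$ equals the number of intersections of the notrix~$N$ of~$K$ with the great circle orthogonal to~$\mathbf{v}$. Consequently the tantrix-bridge map is exactly the intersection-counting map associated to the spherical polygon~$N$ in the sense of Remark~\ref{rem:reinterpretation}, and so the tantrix-bridge graph is the graph of that map. I would then invoke Remark~\ref{rem:reinterpretation} with $P = N$. Recall from Section~\ref{sec:preliminaries_polygonal} that the notrix is co-oriented precisely so that its co-orienting normal points towards the left when viewed from the exterior of the sphere; this is the hypothesis under which the second paragraph of the remark identifies the graph of the intersection-counting map with the union $N^\vee \cup (-N^\vee)$ of the dual polygon and its antipodal polygon.

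The remaining step is to identify $N^\vee$ with the Darboux indicatrix, and this is exactly the content of Proposition~\ref{prop:notrix_darboux_dual}: the Darboux indicatrix~$D$ is the dual of the notrix, so $N^\vee = D$. Substituting, the tantrix-bridge graph is $D \cup (-D)$, the union of the Darboux indicatrix and the anti-Darboux indicatrix, which is the assertion.

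The one point that genuinely requires care—and which I expect to be the main obstacle—is verifying the genericity hypotheses of Remark~\ref{rem:reinterpretation} for the notrix, namely that no two vertices of~$N$ coincide and that no vertex of~$N$ coincides with a vertex of the anti-notrix~$-N$. In the proofs of Theorems~\ref{thm:bridge_polygonal} and~\ref{thm:inflection_polygonal} the analogous conditions for the tantrix and binotrix were read off from ``no two edges parallel'' and ``no two osculating planes parallel'' respectively; here I would argue that the vertices $N_{2i} = T_i \times B_i$ and $N_{2i+1} = T_{i+1} \times B_i$ are pairwise non-coincident and non-antipodal as a consequence of the general-position assumptions on~$X$, treating any remaining degeneracies as a measure-zero locus that can be removed by a small perturbation of the stick knot without changing its knot type. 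I would keep this verification brief, since once it is in place the corollary follows formally from the three cited results.
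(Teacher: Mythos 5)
Your proposal is correct and follows essentially the same route as the paper: the paper obtains Corollary~\ref{cor:tantrixbridge_polygonal} precisely by chaining the lemma identifying the tantrix-bridge map with the intersection count of the notrix, Remark~\ref{rem:reinterpretation} applied to the (left-co-oriented) notrix, and Proposition~\ref{prop:notrix_darboux_dual}. Your added verification of the genericity hypotheses for the notrix is a sensible precaution that the paper leaves implicit in its general-position assumptions on the stick knot.
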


	Finally, the following theorem is the stick equivalent of the second half of Theorem~\ref{thm:tantrix-bridgeinflection}.

\begin{theorem} The tantrix-inflection graph of a stick knot is the spherical polygon obtained by connecting vertices of the notrix and the anti-notrix according to the following rule: we always connect $N_{2i-1}$ to $-N_{2i}$ and $-N_{2i-1}$ to $N_{2i}$, we connect $N_{2i}$ to $-N_{2i+1}$ and $-N_{2i}$ to $N_{2i+1}$ if $\tau_i$ and $\tau_{i+1}$ have the same sign, and we connect $N_{2i}$ to $N_{2i+1}$ and $-N_{2i}$ to $-N_{2i+1}$ if $\tau_i$ and $\tau_{i+1}$ have different signs. \label{thm:tantrixinflection_polygonal} \end{theorem}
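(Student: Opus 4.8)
The plan is to combine Lemma~\ref{lem:stick_darboux}, which identifies the tantrix-inflection map with the function counting intersections of the Darboux indicatrix~$D$ with great circles, with Remark~\ref{rem:reinterpretation}, which computes the graph of any such intersection-counting function. By Remark~\ref{rem:reinterpretation} applied to the spherical polygon~$D$, the tantrix-inflection graph is the union of~$D^{*}$ and its antipodal polygon, where $D^{*}$ is the polygon whose vertex coming from the directed arc $D_{j}D_{j+1}$ is the point $w_{j}$ obtained by moving a distance $\pi/2$ normal to that arc, toward its left as viewed from the exterior of the sphere. (The general position hypotheses of Remark~\ref{rem:reinterpretation}, that no two vertices of~$D$ coincide and no vertex coincides with a vertex of the anti-Darboux indicatrix, follow from our standing assumptions that no two edges and no two osculating planes of~$X$ are parallel, together with our assumption on the cusps of the Darboux indicatrix.) The whole theorem therefore reduces to locating each~$w_{j}$.

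First I would record the elementary fact that for a directed geodesic arc running from a unit vector~$A$ to a unit vector~$C$, the left pole as viewed from the exterior is exactly $(A\times C)/\norm{A\times C}$: walking along the arc at~$A$, the outward radial direction is~$A$ and the forward direction is the component of~$C$ orthogonal to~$A$, so the leftward direction is $A\times C$ up to a positive scalar, and moving $\pi/2$ along the great circle in that direction lands on the corresponding unit vector. (This is consistent with the computation in the proof of Theorem~\ref{thm:inflection_polygonal}, where the left pole of $B_{i-1}B_{i}$ is $\epsilon_{i}T_{i}$.) Since every arc of~$D$ is a great circle arc, $w_{j}$ is precisely this pole.

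Next I would evaluate $w_{j}$ on the two types of arcs of~$D$. Write $\epsilon_{i}\in\{\pm 1\}$ for the sign of~$\tau_{i}$, so that $D_{2i-1}=\epsilon_{i}T_{i}$ and $D_{2i}=B_{i}$, and recall that $N_{2i}=T_{i}\times B_{i}$ and $N_{2i+1}=T_{i+1}\times B_{i}$ are already unit vectors because $B_{i}\perp T_{i},T_{i+1}$. For the arc $D_{2i-1}D_{2i}$, running from $\epsilon_{i}T_{i}$ to~$B_{i}$, the formula gives $w_{2i-1}=\epsilon_{i}\,(T_{i}\times B_{i})=\epsilon_{i}N_{2i}$. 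For the arc $D_{2i}D_{2i+1}$, running from~$B_{i}$ to $\epsilon_{i+1}T_{i+1}$, it gives $w_{2i}=\epsilon_{i+1}\,(B_{i}\times T_{i+1})=-\epsilon_{i+1}N_{2i+1}$.

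Finally I would read the connection rule off the list of vertices of~$D^{*}$ in cyclic order, since consecutive vertices are joined by the arcs of~$D^{*}$. The arc from $w_{2i-1}=\epsilon_{i}N_{2i}$ to $w_{2i}=-\epsilon_{i+1}N_{2i+1}$ joins a signed copy of~$N_{2i}$ to a signed copy of~$N_{2i+1}$: when $\tau_{i}$ and~$\tau_{i+1}$ have the same sign ($\epsilon_{i}=\epsilon_{i+1}$) it joins $N_{2i}$ to $-N_{2i+1}$ up to a global sign, and when they have opposite signs it joins $N_{2i}$ to $N_{2i+1}$; together with the antipodal polygon this reproduces the middle and last clauses of the theorem. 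The arc from $w_{2i}=-\epsilon_{i+1}N_{2i+1}$ to $w_{2i+1}=\epsilon_{i+1}N_{2i+2}$ always joins $N_{2i+1}$ to $N_{2i+2}$ with a sign flip, independently of the~$\epsilon$'s, which, again combined with the antipodal polygon, reproduces the ``always'' clause after reindexing. I expect the only real obstacle to be bookkeeping of the co-orientation signs: pinning down the left-pole formula and then tracking the two factors $\epsilon_{i},\epsilon_{i+1}$ through the cross products so that each of the three stated connection patterns emerges with the correct signs. The triple product $\widetilde{B}_{i-1}\times\widetilde{B}_{i}=((T_{i-1}\times T_{i})\cdot T_{i+1})T_{i}$ from the proof of Theorem~\ref{thm:inflection_polygonal}, and the identity $N_{2i-1}\times N_{2i}=B_{i-1}\times B_{i}$ from the proof of Proposition~\ref{prop:notrix_darboux_dual}, provide convenient sign checks throughout.
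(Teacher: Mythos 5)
Your proposal is correct and follows essentially the same route as the paper's proof: both reduce, via Remark~\ref{rem:reinterpretation} (with Lemma~\ref{lem:stick_darboux} making the link to the Darboux indicatrix explicit), to computing the left poles of the arcs of the Darboux indicatrix as cross products of consecutive vertices, casing on the signs of the torsion. The only discrepancy is that your poles come out antipodal to those displayed in the paper's computation --- the paper's proof effectively uses the convention $N_{2i} = B_i \times T_i$ (matching the smooth $\mathbf{n} = \mathbf{b} \times \mathbf{t}$) rather than its stated definition $N_{2i} = T_i \times B_i$, which you followed --- and this is immaterial because the graph is the union of the resulting polygon with its antipodal polygon.
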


	That is, if two adjacent edges of the Darboux indicatrix share a vertex of the tantrix, then we always join the notrix vertex corresponding to one edge to the anti-notrix vertex corresponding to the other edge, and if two adjacent edges of the Darboux indicatrix share a vertex of the binotrix, then we join the notrix vertex corresponding to one edge to either the notrix vertex or the anti-notrix vertex corresponding to the other edge depending on whether or not the sign of the torsion changes at that vertex of the binotrix.

\begin{proof}
	Since the Darboux indicatrix is the dual of the notrix, each of the points obtained by moving a distance of~$\pi/2$ normal to each arc of the Darboux indicatrix and towards its left as viewed from the exterior of the sphere is either a vertex of the notrix or the anti-notrix.  By Remark~\ref{rem:reinterpretation}, it suffices to check that this sequence of vertices satisfies the conditions above.  We verify this by explicitly computing this sequence of vertices:
	\begin{alignat*}{8}
	D_{2i-1} & \times D_{2i} & \; & & = & \; & T_i & \times B_i & \; & & = & \; & -&N_{2i} && \mbox{if $\tau_i > 0$}, \\
	D_{2i-1} & \times D_{2i} && & = && -T_i & \times B_i && & = && &N_{2i} && \mbox{if $\tau_i < 0$}, \\
	D_{2i} & \times D_{2i+1} && & = && B_i & \times T_{i+1} && & = && &N_{2i+1} && \mbox{if $\tau_{i+1} > 0$}, \\
	D_{2i} & \times D_{2i+1} && & = && B_i & \times -T_{i+1} && & = && -&N_{2i+1} & \;\; & \mbox{if $\tau_{i+1} > 0$}.
	\end{alignat*}
It is straightforward to see that when two adjacent edges $D_{2i-2}D_{2i-1}$, $D_{2i-1}D_{2i}$ of the Darboux indicatrix share a vertex~$T_i$ of the tantrix, we join $N_{2i-1}$ to $-N_{2i}$ and $-N_{2i-1}$ to $N_{2i}$, and when two adjacent edges $D_{2i-1}D_{2i}$, $D_{2i}D_{2i+1}$ share a vertex~$B_i$ of the binotrix, we join $N_{2i}$ to $N_{2i+1}$ if the sign of the torsion changes at that vertex of the binotrix, and to $-N_{2i+1}$ if the sign of the torsion does not change at that vertex of the binotrix.
\end{proof}

\begin{remark}
	As in Remark~\ref{rem:greatcirclestangent}, Theorem~\ref{thm:tantrixinflection_polygonal} provides an alternative interpretation of the great circles tangent to the notrix and anti-notrix of a smooth knot at points where the geodesic curvature~$\tau/\kappa$ of the tantrix is stationary in Theorem~\ref{thm:tantrix-bridgeinflection}.  If we interpret arcs of a great circle as having zero geodesic curvature, a vertex of the tantrix with negative torsion as having positive geodesic curvature, and a vertex of the tantrix of positive torsion as having negative geodesic curvature, then we have the equivalent of a ``point of stationary geodesic curvature of the tantrix'' at every vertex of the tantrix and along edges of the tantrix (which correspond naturally with vertices of the binotrix) where the sign of the torsion is the same at both vertices.  These are precisely the vertices of the Darboux indicatrix where we move from a vertex of the notrix to a vertex of the anti-notrix.  In this sense, the great circles tangent to the notrix and anti-notrix at points where the geodesic curvature $\tau/\kappa$ of the tantrix is stationary can be viewed as the union of two antipodal great circle arcs joining the notrix to the anti-notrix.
\end{remark}

\section{Discussion} \label{sec:discussion}

	Let~$K$ be a stick knot with $n$ edges.  Since each arc of a spherical indicatrix of~$K$ is a geodesic, it can intersect any great circle at most once.  It follows that the bridge and inflection maps of~$K$ satisfy $\mathfrak{b}_\mathbf{v}(K) \leq n$ and $\mathfrak{i}_\mathbf{v}(K) \leq n$ and the tantrix-bridge and tantrix-inflection maps of~$K$ satisfy $\mathfrak{tb}_\mathbf{v}(K) \leq 2n$ and $\mathfrak{ti}_\mathbf{v}(K) \leq 2n$ for all $\mathbf{v} \in S^2$.  We believe that for any stick knot~$K$, it is possible to use the methods in a paper by Wu~\cite{Wu02} to show that there exists a smooth knot $K'$ with the same knot type such that the maps defined by the various indicatrices of~$K'$ are combinatorially the same as those of $K$, that is, there exists an isotopy of~$S^2$ that sends a map on~$K'$ to that on~$K$ (note that we do not require the same isotopy for all maps).  This would enable us to obtain bounds on the stick number of knots by studying the maps defined by the various indicatrices for smooth knots.  Also, since each arc of the Darboux indicatrix of a stick knot connects a vertex of the tantrix to a vertex of the binotrix and thus has length~$\pi/2$, the total length of the Darboux indicatrix of~$K$ is~$n\pi$.  It would be interesting to see if one could relate the length of the Darboux indicatrix of a stick knot to the length of the Darboux indicatrix of smooth knot conformations of the same knot type, as this would also enable us to obtain bounds on the stick number of knots by studying their Darboux indicatrices.
	
	Observe that the bridge index and superbridge index of a knot can always be realized in smooth conformations with non-vanishing curvature.  In such a case, the bridge graph is simply the union of the binotrix and the anti-binotrix.  It follows that $sb[K] = b[K] + 1$ if and only if there exists a knot conformation $K \in [K]$ such that the binotrix does not intersect itself or the anti-binotrix and this conformation realizes the lowest possible value of the bridge map over all knots $K \in [K]$.  This is because at a point where the binotrix intersects itself or the anti-binotrix, there will be two regions adjacent to that point where the value of the bridge map differs by $4$, and hence the superbridge index would be at least two greater than the bridge index.  
	
	In particular, Jeon and Jin~\cite{Je02} conjectured that the only knots with superbridge index 3 are the trefoil knot and the figure eight knot.  This conjecture would be proved if it could be shown that the trefoil knot and the figure eight knot are the only non-trivial knots with conformations such that the binotrix does not intersect itself or the anti-binotrix and  the bridge map realizes a value of $4$ in such a conformation.


\begin{thebibliography}{99}

\bibitem{Ar95} V.~I.~Arnold, The geometry of spherical curves and the algebra of quaternions, \textit{Russian Math. Surveys} \textbf{50} (1995) 1--68.

\bibitem{Ba82} T.~F.~Banchoff, Frenet frames and theorems of Jacobi and Milnor for space polygons, \textit{Rad Jugoslavenska Akademija Znanosti I Umjetnosti} \textbf{396} (1982) 101--108.

\bibitem{Bl37} W.~Blaschke, \textit{Vorlesungen \"{u}ber Integralgeometrie}, vol.\ II, 3rd ed., Deutscher Verlag der Wissenschaften, Berlin, 1955.

\bibitem{Fa49} I.~F\'{a}ry, Sur la courbure totale d'une courbe gauche faisant un n{\oe}ud, \textit{Bull.\ Soc.\ Math.\ France} \textbf{77} (1949) 128--138.

\bibitem{Fe51} W.~Fenchel, On the differential geometry of closed space curves, \textit{Bull.\ Amer.\ Math.\ Soc.\ (N. S.)} \textbf{57} (1951) 44--54.

\bibitem{Je02} C.~B.~Jeon, G.~T.~Jin, A computation of superbridge index of knots, \textit{J.\ Knot Theory Ramifications} \textbf{11} (2002) 461--473.

\bibitem{Ji97} G.~T.~Jin, Polygon indices and superbridge indices of torus knots and links, \textit{J.\ Knot Theory Ramifications} \textbf{6} (1997) 281--289.

\bibitem{Ku87} N.~H.~Kuiper, A new knot invariant, \textit{Math.\ Ann.} \textbf{278} (1987) 193--209.

\bibitem{Mc01} A.~S.~McRae, The Milnor-Totaro theorem for space polygons, \textit{Geom.\ Dedicata} \textbf{84} (2001) 321--330.

\bibitem{Mi50} J.~W.~Milnor, On the total curvature of knots, \textit{Ann.\ of Math.\ (2)} \textbf{52} (1950) 248--257.

\bibitem{Mi53} J.~W.~Milnor, On total curvatures of closed space curves, \textit{Math.\ Scand.} \textbf{1} (1953) 289--296.

\bibitem{Sc54} H.~Schubert, \"{U}ber eine numerische Knoteninvariante, \textit{Math.\ Z.} \textbf{61} (1954) 245--288.

\bibitem{Ur04} R.~Uribe-Vargas, On singularities, ``perestroikas'' and differential geometry of space curves, \textit{Enseign.\ Math.\ (2)} \textbf{50} (2004) 69--101.

\bibitem{Wu02} Y.-Q.~Wu, Knots and links without parallel tangents, \textit{Bull.\ Lond.\ Math.\ Soc.} \textbf{34} (2002) 681--690.

\end{thebibliography}
\end{document}